\newcommand{\ndN}{\mathbb{N}}
\newcommand{\ndZ}{\mathbb{Z}}
\newcommand{\ndR}{\mathbb{R}}
\renewcommand{\Pr}[1]{\mathbb{P}(#1)}
\newcommand{\Prb}[1]{\mathbb{P}\left( #1 \right)}
\newcommand{\Ex}[1]{\mathbb{E}[#1]}
\newcommand{\Va}[1]{\mathbb{V}[#1]}
\newcommand{\w}{w}
\newcommand{\cO}{\mathcal{O}}
\newcommand{\cF}{\mathcal{F}}
\newcommand{\cB}{\mathcal{B}}
\newcommand{\cN}{\mathcal{N}}
\newcommand{\cT}{\mathcal{T}}
\newcommand{\cA}{\mathcal{A}}
\newcommand{\scO}{\mathscr{O}}
\newcommand{\scS}{\mathscr{S}}
\newcommand{\scH}{\mathscr{H}}
\newcommand{\scD}{\mathscr{D}}
\newcommand{\scX}{\mathscr{X}}
\newcommand{\scG}{\mathscr{G}}
\newcommand{\scF}{\mathscr{F}}
\newcommand{\scA}{\mathscr{A}}
\newcommand{\scB}{\mathscr{B}}
\newcommand{\scC}{\mathscr{C}}
\newcommand{\scP}{\mathscr{P}}
\newcommand{\mA}{\mathsf{A}}
\newcommand{\mC}{\mathsf{C}}
\newcommand{\mD}{\mathsf{D}}
\newcommand{\mB}{\mathsf{B}}
\newcommand{\mG}{\mathsf{G}}
\newcommand{\mH}{\mathsf{H}}
\newcommand{\mP}{\mathsf{P}}
\newcommand{\mX}{\mathsf{X}}
\newcommand{\mY}{\mathsf{Y}}
\newcommand{\me}{\mathsf{e}}
\newcommand{\CRT}{\mathcal{T}_\me}
\newcommand{\Sym}{\mathrm{Sym}}
\newcommand{\RSym}{\mathrm{RSym}}
\newcommand{\cE}{\mathcal{E}}
\newcommand{\Di}{\textnormal{D}}
\newcommand{\eqdist}{\,{\buildrel d \over =}\,}
\newcommand{\convdis}{\,{\buildrel w \over \longrightarrow}\,}
\newcommand{\convp}{\,{\buildrel p \over \longrightarrow}\,}
\newcommand{\Set}{\textsc{SET}}
\newtheorem{theorem}{Theorem}[section]
\newtheorem{corollary}[theorem]{Corollary}
\newtheorem{proposition}[theorem]{Proposition}
\newtheorem{lemma}[theorem]{Lemma}
\numberwithin{equation}{section}
\title{\textbf{Asymptotic properties of random unlabelled block-weighted graphs}}
\date{}
\author{Benedikt Stufler\thanks{University of Zurich, E-mail: benedikt.stufler@math.uzh.ch; The author gratefully acknowledges support by the German Research Foundation DFG, STU 679/1-1}}
\begin{document}
	
	\maketitle

\begin{abstract}
	We study the asymptotic shape of random  unlabelled graphs subject to certain subcriticality conditions. The graphs are sampled with probability proportional to a product of Boltzmann weights assigned to their $2$-connected components. As their number of vertices tends to infinity, we show that they admit the Brownian tree as Gromov--Hausdorff--Prokhorov scaling limit, and converge in a strengthened Benjamini--Schramm sense toward an infinite random graph. We also consider a family of random graphs that are allowed to be disconnected. Here a giant connected component emerges and the small fragments converge without any rescaling towards a finite random limit graph.  %Our approach is a symbioses of stochastic process methods and the block-decomposition of cycle-pointed graphs. %that also facilitates a transfer of a wide range of asymptotic properties concerning additive and extremal graph parameters from rooted graphs to unrooted graphs, such as central limit theorems for the number of blocks, cut-vertices, or  vertices with a given degree.
\end{abstract}

\let\thefootnote\relax\footnotetext{ \\\emph{MSC2010 subject classifications}. Primary 60C05; secondary 05C80. \\
	\emph{Keywords and phrases.} random graphs, distributional limits, scaling limits}

\vspace {-0.5cm}

\section{Introduction and main results}
The probabilistic study of random graphs from restricted classes  has received some attention in recent literature~\cite{2017arXiv170702144G,2016arXiv160608769G,MR3184197,MR2350456,MR2873207,2016arXiv160903974C,2015arXiv150406344C,2015arXiv151208889D}. In the present work, we finalize a project \cite{inannals, 2016arXiv161202580S,2015arXiv150402006S,2015arXiv150207180P,2014arXiv1412.6333S,Mreplaceme} that showed how stochastic process methods combined with Boltzmann sampling principles based on combinatorial bijections may fruitfully be applied in this context. Previous efforts focused on random labelled graphs and unlabelled rooted graphs. We complete the picture by providing probabilistic graph limits concerning  graphs that are \emph{unlabelled} and \emph{unrooted}.

There are various kinds of graph limits. Gromov--Hausdorff-Prokhorov scaling limits describe the asymptotic global geometric shape of random discrete objects, with the archetype of limit objects being given by Aldous' Brownian continuum random tree  (CRT) \cite{MR1085326,MR1166406,MR1207226}.  Since Aldous' pioneering work, the universality class of the CRT received considerable attention,  see for example Caraceni~\cite{Ca}, Curien, Haas and Kortchemski \cite{MR3382675}, and Janson and Stef\'ansson~\cite{MR3342658}. While scaling limits are concerned with the global behaviour of a sequence of random metric spaces, they contain little information on asymptotic local properties. These are better described by distributional limits \cite{MR1873300}. Convergence of a sequence of random rooted objects in this sense  boils down to convergence  of finite neighbourhoods of the root vertices. See for example Aldous and Pitman \cite{MR1641670}, Janson \cite{MR2908619} and references given therein, Bj\"ornberg and Stef\'ansson \cite{MR3183575}, and Stephenson \cite{2014arXiv1412.6911S}. These notions are usually defined for random connected graphs. In certain contexts, it is also interesting to allow disconnected graphs. The emergence of a giant connected component together with a almost surely finite limit graph for the small fragments has been observed for a variety of models of random graphs. See  in particular the work by McDiarmid \cite{MR3084594} and references given therein.

Although considerable progress has been made regarding random ordered structures or labelled graphs, less is known about complex structures considered up to symmetry, in particular unlabelled graphs. Recall that for any class of graphs  one may consider three random models: “each labelled graph with $n$ vertices equally likely”, “each unlabelled rooted graph with $n$ vertices equally likely”, and each “unlabelled graph with $n$ vertices equally likely”. In the most basic case, the class of trees, all three models have been well understood. For example, the scaling limit for the labelled case was established by Aldous in his pioneering work \cite{MR1085326}, the scaling limit for unlabelled rooted trees was studied Marckert and Miermont \cite{MR2829313}, Haas and Miermont \cite{MR3050512} and Panagiotou and S. \cite{Panagiotou2017}, and the unlabelled case (without root vertices) was treated in S.~\cite{2014arXiv1412.6333S}. See also  Wang~\cite{2016arXiv160408287W}, who established scaling limits for a family of unrooted weighted plane trees indexed by their height. It is natural to aim for similar results for more complex classes of graphs. In the labelled case, a scaling limit for subcritical random graphs was established by Panagiotou, S. and Weller \cite{inannals} and  Benjamini--Schramm limits are given in S. \cite{2016arXiv161202580S} and Georgakopolous and Wagner~\cite{2017arXiv170100973G}. These results were preceded by the study of many combinatorial parameters as in Bernasconi, Panagiotou and Steger \cite{MR2534261, MR2789731}, and Drmota and Noy \cite{MR3184197}. 
The unlabelled rooted case was studied by Drmota, Fusy, Kang, Kraus and Ru\'e \cite{MR2873207}, who conducted a combinatorial study of additive graph parameters. S. \cite{2015arXiv150402006S} used a probabilistic approach to treat extremal properties and establish limits encompassing a  scaling limit, a local weak limit for the vicinity of the root vertex, and a Benjamini--Schramm limit. Georgakopolous and Wagner~\cite{2017arXiv170100973G} showed local weak convergence for the vicinity of the fixed root by an analytic approach. For unlabelled unrooted graphs, less is known, with a notable exception by Kraus \cite{MR2735357}, who studies the degree distribution of dissections of polygons considered up to symmetry, and \cite{2017arXiv170100973G}, where a Benjamini--Schramm limit for random unlabelled unrooted graphs from subcritical graph classes was established. However, this does not answer how these graphs behave asymptotically on a global scale. For this reason, the present paper aims to complete the picture by providing a Gromov--Hausdorff--Prokhorov limit. As a  byproduct, this yields precise limits for extremal properties such as the diameter, but also for the distances between $k$ independently sampled uniform points in the graph.  Rather than studying the classical model of uniform random graphs from such classes, we formulate our results for random graphs sampled according to Boltzmann weights on the $2$-connected components. Weighted graphs have also been studied recently in other contexts, see in particular the works by McDiarmid~\cite{MR3084594} and Richier~\cite{2017arXiv170401950R}, as well as references given therein. The reason for this higher level of generality is twofold. First, it allows us to gain a clearer perspective on the phenomenon under consideration. For example, any property of the uniform labelled $n$-vertex tree has an analogue in the more general probabilistic context of critical Galton--Watson trees with a reasonably well-behaved reproduction law. Second, further studies of different weight sequences may lead to the discovery of new phenomena. Contemporary examples include the   $\alpha$-stable maps by Le Gall and Miermont~\cite{MR2778796} and limit theorems for face-weighted outerplanar maps  in~\cite{2017arXiv171004460O}.

Suppose that for each $2$-connected graph $B$ (including the complete graph $K_2$ with two vertices) we are given a weight $\iota(B) \ge 0$ such that isomorphic graphs receive the same weight. To any connected graph $C$ we may then assign the weight 
\begin{align}
\label{eq:thweight}
\omega(C) = \prod_{B} \iota(B)
\end{align}
with the index $B$ ranging over all $2$-connected components of $C$, that is, maximal $2$-connected subgraphs. If $C$ consists of a single vertex, then it receives weight $1$.  We may then consider the random connected unlabelled graph $\mC_n^\omega$ with $n$ vertices, sampled with probability proportional to its $\omega$-weight, and likewise the unlabelled rooted random graph $\mA_n^\omega$. This model encompasses so called random unlabelled graphs from block-stable classes, which correspond to the special case where each $\iota$-weight is required to be either equal to $1$ or $0$. Block-stable classes of graphs have received some attention in recent literature,   see for example McDiarmid and Scott \cite{2014arXiv1408.4257M}.

The direct study of $\mC_n^\omega$ is challenging, as the structure of the symmetries of objects without roots is much more complex as in the rooted case, where each symmetry is required to fix the root vertex. So, instead of directly studying unrooted unlabelled graphs, we are going to take a more economic approach and geometrically approximate $\mC_n^\omega$ by a random rooted graph having size $n + O_p(1)$. More precisely, we are going to construct a random rooted graph $\mD_n$ with size $d_n = |\mD_n| = O_p(1)$ such that the graph $\mD_n + \mA_{n - d_n}^\omega$ obtained by identifying the root of $\mD_n$ with the root of $\mA_{n - h_n}^\omega$ approximates $\mC_n^\omega$ in total variation. 

In order for the random graph $\mC_n^\omega$ to behave in a tree-like manner, we will make an assumption on the weight-sequence, which generalizes the definition of subcriticality for block-stable classes of unlabelled graphs given in \cite[Sec. 5]{MR2873207}: Define the cycle index sum
\[
Z_{(\scB')^\iota}(s_1, s_2, \ldots) = \sum_{k \ge 1} \sum_{B' \in \scB'_k} \frac{\iota(B')}{|k|!} \sum_{\sigma} s_1^{\sigma_1} s_2^{\sigma_2} \cdots s_k^{\sigma_k},
\]
with $\scB_k'$ denoting the set of all $2$-connected graphs with vertex set $\{*,1, \ldots, k\}$ for arbitrary $k \ge 1$, the sum index $\sigma:[k] \to [k]$ ranging over the elements of the permutation group of order $k$ such that the canonically extension $\bar{\sigma}$ with $\bar{\sigma}|_{[k|]} = \sigma$ and $\bar{\sigma}(*) = *$ is an automorphism of $B'$, and $\sigma_i$ denoting the number of cycles of length $i$ in $\sigma$. Likewise, we define the cycle sum
\[
Z_{(\scC)^\omega}(s_1, s_2, \ldots) = \sum_{k \ge 1} \sum_{C \in \scC_k}  \frac{\omega(C)}{|k|!} \sum_{\sigma} s_1^{\sigma_1} s_2^{\sigma_2} \cdots s_k^{\sigma_k},
\]
with $\scC_k$ denoting the set of all connected graphs with labels in $[k]$ and the sum index $\sigma$ ranging over automorphisms of $C$. Furthermore, we set for each $i \ge 1$
\[
\tilde{\scA}^{\omega^i}(z) = \sum_A \omega(A)^i z^{|A|}
\]
with the sum index $A$ ranging over all rooted unlabelled graphs. We require that the radius of convergence $\rho_\scA$ of $\scA^\omega(z)$ and  the bivariate sum
\[
g(x,y) = \exp\left( Z_{(\scB')^\iota}(x, \tilde{\scA}^{\omega^2}(y^2), \tilde{\scA}^{\omega^3}(y^3), \ldots) + \sum_{i \ge 2} \frac{1}{i} Z_{(\scB')^\iota}(\tilde{\scA}^{\omega^i}(y^i), \tilde{\scA}^{\omega^{2i}}(y^{2i}),  \ldots) \right)
\]
satisfy
\begin{align}
\label{eq:H}
	\rho_\scA>0, \qquad g(\tilde{\scA}^\omega(\rho_\scA) + \epsilon, \rho_\scA + \epsilon) < \infty, \qquad \text{and} \qquad Z_{\scC^\omega}(0, ( \rho_\scA + \epsilon)^2, ( \rho_\scA + \epsilon)^3, \ldots) < \infty
\end{align}
for some $\epsilon>0$.

Although this requirement seems rather abstract, it is known to be satisfied for a wide range of random graphs that appear naturally in combinatorics. For example, condition~$\eqref{eq:H}$ holds if $\mC_n^\omega$ is the uniform random connected unlabelled series-parallel graph, cacti graph or outerplanar graph  with $n$ vertices. Or, more generally, this encompasses so called random graphs from subcritical classes of unlabelled graphs, which includes random graphs from classes defined by a finite set of $3$-connected components. See Section~6 of the work~\cite{MR2873207} by Drmota, Fusy, Kang, Kraus, and Ru\'e for details.

\begin{theorem}
	\label{te:approx}
	Suppose that the tree-like requirement \eqref{eq:H} is satisfied.
	Then there is a coupling of $\mC_n^\omega$ with a random rooted graph $\mD_n$ with size $d_n = |\mD_n| = O_p(1)$, and the random rooted graph $\mA_{n - h_n}^\omega$, such that the graph $\mD_n + \mA_{n - h_n}^\omega$ obtained by identifying the root of $\mD_n$ with the root of $\mA_{n - h_n}^\omega$ approximates $\mC_n^\omega$ in total variation. The speed of convergence is exponential, that is,
	\begin{align}
		\label{eq:tv}
		d_{\textsc{TV}}(\mC_n^\omega, \mD_n + \mA^\omega_{n - d_n}) \le C \exp(-cn)
	\end{align}
	for some constants $C,c>0$ that do not depend on $n$.
\end{theorem}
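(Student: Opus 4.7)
The plan is to adapt the centroid / largest-subtree technique developed for unlabelled unrooted trees in \cite{2014arXiv1412.6333S} to the present block-weighted graph setting. The underlying heuristic is that, under \eqref{eq:H}, the block tree of $\mC_n^\omega$ behaves as a subcritical Galton--Watson tree conditioned on having total mass $n$, so identifying a canonical centroid and pruning a small neighbourhood around it leaves a rooted graph distributed as $\mA_{n-d_n}^\omega$ up to an exponentially small error.

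First I would set up the canonical bipartite block tree $T(C)$ associated with an unlabelled connected graph $C$, whose vertex set is the disjoint union of the cut vertices of $C$ and the $2$-connected components of $C$, with edges given by incidence; the weight $\omega(C)$ then factorises multiplicatively over blocks, in accord with the definition of $g$ in \eqref{eq:H}. Condition \eqref{eq:H} ensures that $\tilde{\scA}^\omega$ has finite positive radius $\rho_\scA$ with a square-root type singularity at $\rho_\scA$ and that the bivariate series $g$ remains finite strictly beyond the critical point, which together yield exponential tails on the sizes of pendant subtrees in the branching-type description of $\mA_n^\omega$. Next I would define a canonical root vertex $v^\star$ of $\mC_n^\omega$ by taking the centroid of $T(\mC_n^\omega)$, with a canonical tie-breaker in the edge case (for instance, by choosing the side of larger total mass, and then a canonical ordering of unlabelled rooted graphs). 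Because the centroid is fixed by every automorphism of $T(\mC_n^\omega)$, the vertex $v^\star$ is well-defined on the isomorphism class. Let $\mD_n$ be the smallest neighbourhood of $v^\star$ in $T(\mC_n^\omega)$ whose complement is a single connected rooted subgraph $R_n$ of size $n-d_n$ attached at the unique boundary cut vertex of $\mD_n$; the coupling $\mC_n^\omega \leftrightarrow (\mD_n, R_n)$ is then the candidate for the identity $\mC_n^\omega \approx \mD_n + \mA_{n-d_n}^\omega$.

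The two estimates to be established are then: (i) $d_n = O_p(1)$ with exponential tails, and (ii) conditionally on $\mD_n$, the graph $R_n$ coincides in distribution with $\mA_{n-d_n}^\omega$ up to total variation $O(\exp(-cn))$. Claim (i) is the classical centroid behaviour for subcritical Galton--Watson trees conditioned on total size; under \eqref{eq:H} it follows from singularity analysis of $\tilde{\scA}^\omega$ at $\rho_\scA$ together with the strict-inequality clause of \eqref{eq:H} controlling $g$ on a larger disk. Claim (ii) reduces, via the cycle-index identities relating $Z_{\scC^\omega}$ and $\tilde{\scA}^\omega$, to a count of isomorphism types of pairs $(\mD_n, R_n)$, whose dominant contribution is matched by $\omega(\mD_n)\cdot\tilde{\scA}^\omega$ evaluated at size $n-d_n$.

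The principal obstacle will be the symmetry accounting inherent to the unlabelled unrooted setting: a nontrivial automorphism of $\mC_n^\omega$ could permute branches attached to $v^\star$ or act nontrivially on $\mD_n$ itself, causing the naive counts to disagree with those of $\mA_{n-d_n}^\omega$. I would overcome this by proving an exponential large-deviation bound on the probability of such symmetries, using the fact that in a subcritical branching tree of size $n$ any pair of macroscopic pendant subtrees coincides in isomorphism type only with probability $O(\exp(-cn))$; on the complementary asymmetric event the counts match exactly, yielding the total variation estimate \eqref{eq:tv}.
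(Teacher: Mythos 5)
Your construction breaks down at the very first step, before any symmetry accounting is needed. Rooting at the centroid $v^\star$ of the block tree forces every branch at $v^\star$ to have size at most $n/2$; since under \eqref{eq:H} the number of branches at the centroid is $O_p(1)$, with high probability at least two of them are of order $n$. Consequently the ``smallest neighbourhood of $v^\star$ whose complement is a single connected rooted subgraph'' must swallow all but one of these macroscopic branches, so your $\mD_n$ has size $\Theta(n)$ rather than $O_p(1)$. Even if you repaired the bookkeeping so that both pieces were retained, the distributional claim (ii) cannot hold: the conditional law of the large rooted part in your coupling is the weight-proportional law \emph{conditioned on a centrality constraint at its root} (no root branch may exceed roughly $n/2$), whereas the root of $\mA^\omega_{n-d_n}$ is typically highly non-central -- its largest root branch has size $n-O_p(1)$ -- so the two laws are at total variation distance $1-o(1)$, not $O(e^{-cn})$. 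Finally, the asymmetry estimate you invoke is false in this setting: random unlabelled graphs here have many automorphisms with probability tending to one (the paper itself stresses this by comparing labelled and unlabelled growth constants); only symmetries exchanging \emph{macroscopic} parts are exponentially rare, and that does not rescue points (i) and (ii).

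The paper proceeds quite differently: instead of a canonical root it uses cycle pointing, exploiting that every unlabelled $n$-object has exactly $n$ cycle pointings, so the unrooted weighted model is \emph{exactly} the cycle-pointed model at size $n$. The species identity $(\scC^\circ)^\omega \simeq \scA^\omega + (\scC^\circledast_v)^\omega + (\scC^\circledast_b)^\omega$, with $(\scC^\circledast_v)^\omega \simeq (\scP \circledcirc ((\scB')^\iota \circ \scA^\omega)) \star \scA^\omega$ and $(\scC^\circledast_b)^\omega \simeq (\scB^\circledast)^\iota \circledcirc \scA^\omega$, is then an exact weight-preserving decomposition; subcriticality lemmas show that the non-$\scA^\omega$ attachments are $O_p(1)$, and the only source of error is the block-centered case when $\bar{Z}_{((\scB')^\circledast)^\iota}=0$, which occurs with exponentially small probability -- this is where the bound \eqref{eq:tv} comes from (it is even $0$ otherwise). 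If you want to salvage a rooting-based argument, you would have to replace the centroid by a marked vertex whose law reproduces the size-biasing inherent in cycle pointing, which is essentially what the cycle-pointing machinery does for you.
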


%\textcolor{red}{add tail-bound for the size of attached graph!}

We obtain Theorem~\ref{te:approx} by combining Gibbs partition methods~\cite{2016arXiv161001401S} with the cycle pointing technique developed by Bodirsky, Fusy, Kang and Vigerske \cite{MR2810913}. The idea to use cycle pointing to this end was also used in  S. \cite{2014arXiv1412.6333S} for the probabilistic study of unlabelled trees with possible degree restrictions. The present work and \cite{2014arXiv1412.6333S} intersect precisely for the model of uniform random unlabelled trees {\em without} degree restrictions. Unlabelled trees with proper vertex degree restrictions do not fall into the family of random graphs we consider here, and also require a different type of decomposition. % and as the model of random graphs considered here does not admit degree restrictions. But this allows us to use decomposition that is based on the blocks of the graph.

Theorem~\ref{te:approx} allows us to transfer a large class of asymptotic graph properties from $\mA_n^\omega$ to $\mC_n^\omega$. Note that this approach does not work as well the other way. We may at best deduce that an asymptotic property of $\mC_n^\omega$ must also hold for the random rooted graph $\mA_{n - h_n}^\omega$ which has a {\em random} size, but this does a priori not imply that the property also asymptotically holds for $\mA_n^\omega$ which has a {\em deterministic} size. %Hence it is more natural and economic to study $\mA_n^\omega$ and then transfer the results to~$\mC_n^\omega$.  

We now state our main applications.

\begin{theorem}
	\label{te:scalinglimit}
	Suppose that \eqref{eq:H} holds, and let $\mu_n$ denote the uniform measure on the vertices of $\mC_n^\omega$. Then there is a constant $c_\omega>0$ such that
	\begin{align}
	\label{eq:limt}
	 \left(\mC_n^\omega, \frac{c_\omega}{\sqrt{n}} d_{\mC_n^\omega}, \mu_n \right) \convdis (\CRT, d_{\CRT}, \mu)
	 \end{align}
	 in the Gromov--Hausdorff--Prokhorov sense, with $(\CRT, d_{\CRT}, \mu)$ denoting the Brownian continuum random tree. Moreover, there are constants $C,c>0$ such the diameter $\Di(\mC_n^\omega)$ satisfies for all $n$ the tail bound
	 \begin{align}
	 \label{eq:tailb}
	 \Pr{\Di(\mC_n^\omega) \ge x} \le C \exp(-cx^2 / n).
	 \end{align}
\end{theorem}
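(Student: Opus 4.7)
The plan is to transfer the scaling limit and diameter tail from the rooted model $\mA_n^\omega$ to the unrooted model $\mC_n^\omega$ via Theorem~\ref{te:approx}. Because the total variation distance between $\mC_n^\omega$ and $\mD_n + \mA^\omega_{n-d_n}$ decays exponentially in $n$, it suffices to prove \eqref{eq:limt} and \eqref{eq:tailb} for $\mD_n + \mA^\omega_{n-d_n}$: the total-variation error can be absorbed into the constants $C,c$ of~\eqref{eq:tailb} and is harmless for weak convergence.

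For the rooted ingredient I would invoke the Gromov--Hausdorff--Prokhorov scaling limit for $\mA_n^\omega$ established in the earlier part of this project on unlabelled rooted subcritical graphs~\cite{2015arXiv150402006S,Panagiotou2017}: under~\eqref{eq:H} there is a constant $c_\omega>0$ with
\[
\left(\mA_n^\omega,\ \tfrac{c_\omega}{\sqrt{n}}\, d_{\mA_n^\omega},\ \mu_n\right) \convdis (\CRT, d_{\CRT}, \mu),
\]
together with a uniform Gaussian diameter tail $\Prb{\Di(\mA_n^\omega) \ge x} \le C\exp(-cx^2/n)$. Since $d_n = O_p(1)$, for each $\varepsilon>0$ one may fix $K$ with $\Prb{d_n > K} < \varepsilon$ and condition on $d_n = k$ for $k \le K$. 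Under this conditioning, $\mA^\omega_{n-k}$ is the unconditioned rooted model of deterministic size $n-k$, and since $\sqrt{(n-k)/n} \to 1$ the limit above transfers to $\mA^\omega_{n-k}$ with the same normalization. Identifying the root of $\mD_n$ with that of $\mA^\omega_{n-d_n}$ perturbs GHP distances by at most $2\,|\mD_n|/\sqrt{n} = O_p(1/\sqrt{n})$ and the uniform measure by at most $|\mD_n|/n = O_p(1/n)$ in total variation, both of which are asymptotically negligible. This yields~\eqref{eq:limt}.

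For the tail bound~\eqref{eq:tailb}, the triangle inequality gives
\[
\Di(\mD_n + \mA^\omega_{n-d_n}) \le \Di(\mA^\omega_{n-d_n}) + 2\,|\mD_n|,
\]
so that the event $\{\Di(\mD_n + \mA^\omega_{n-d_n}) \ge x\}$ is contained in $\{|\mD_n| \ge x/4\} \cup \{\Di(\mA^\omega_{n-d_n}) \ge x/2\}$. The first event has exponentially small probability, since the exponential speed in Theorem~\ref{te:approx} (together with the Gibbs-partition construction of $\mD_n$) yields exponential tails for $|\mD_n|$. The second event is controlled by summing the Gaussian tail of $\Di(\mA^\omega_m)$ against the law of $d_n$, using that $1/(n-d_n) \le 2/n$ on the typical event $\{d_n \le n/2\}$.

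The main obstacle will be making the random-size step in the scaling limit argument fully rigorous: one must justify passing to the limit along the random subsequence $n - d_n$, which essentially requires a uniform-over-nearby-sizes statement for $(\mA^\omega_m, \tfrac{c_\omega}{\sqrt{m}} d_{\mA^\omega_m}, \mu_m)$ that absorbs the $O_p(1)$ fluctuation of the size. A clean route is to exploit the exponential tails of $d_n$ together with a coupling between $\mA^\omega_m$ and $\mA^\omega_{m+1}$ that changes GHP distances by at most $O_p(1)/\sqrt{m}$, and then invoke Skorokhod's representation theorem to conclude convergence in probability along the random index.
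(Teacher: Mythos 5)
Your high-level plan (transfer from the rooted model via Theorem~\ref{te:approx}, contract $\mD_n$, absorb the exponentially small total-variation error) is the same as the paper's, but there is a genuine gap at the very first step: you invoke a Gromov--Hausdorff--\emph{Prokhorov} scaling limit for $\mA_n^\omega$ as if it were already available from \cite{2015arXiv150402006S,Panagiotou2017}. It is not. The prior work \cite[Thm.~6.14]{2015arXiv150402006S} gives only Gromov--Hausdorff convergence of $\mA_n^\omega$ (plus the diameter tail bound), and \cite{Panagiotou2017} concerns unlabelled rooted trees, not these graph classes. Upgrading GH to rooted GHP convergence is precisely the paper's Lemma~\ref{le:lemma}, whose proof occupies Section~\ref{sec:rghp}: one couples $\mA_n^\omega$ with the Galton--Watson tree $\cT_n^f$ of ``fixed points'', applies Aldous' invariance principle to $\cT_n^f$, and then must show that the push-forward of the uniform measure on the vertices of $\mA_n^\omega$ to $\cT_n^f$ is close in Prokhorov distance to the uniform measure on $\cT_n^f$; this requires the depth-first-index argument, a deviation bound for the sums of the $\zeta_i$, and tightness of the rescaled height process. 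Without this ingredient your argument proves nothing about the measured limit, so the proposal is incomplete exactly where the new work lies. (Your remaining transfer steps --- conditioning on $d_n=k$, using that the large component is then the unconditioned rooted model, and the $O_p(n^{-1/2})$ perturbation from gluing $\mD_n$ --- do match the paper.)

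There is a second, more localized error in the tail bound: the claim that $|\mD_n|$ has exponential tails is false in general. In the block-cycle-center case the rest attached to the root of the large component consists of the $H$ vertices \emph{and} the non-maximal components $\mA_i$ from \eqref{eq:comp}; conditioned on $|\mC|=n$ these components have Boltzmann-type limit laws with only polynomial size tails (of order $m^{-1/2}$), so no bound $\Prb{|\mD_n|\ge y}\le Ce^{-cy}$ holds, and the exponential speed in \eqref{eq:tv} says nothing about tails of $|\mD_n|$ anyway. Consequently the inclusion $\{\Di\ge x\}\subset\{|\mD_n|\ge x/4\}\cup\{\Di(\mA^\omega_{n-d_n})\ge x/2\}$ does not yield \eqref{eq:tailb} for $x$ of order $n^{3/4}$ or larger. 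The fix is what the paper does: treat the three parts of the decomposition separately, use exponential tails only for $|\mP|$ (Lemma~\ref{le:vc}) and for $H$ (Lemma~\ref{le:subc}), and control $\max_i\Di(\mA_i)$ by conditioning on the component sizes and applying the Gaussian diameter bound \eqref{eq:dum} to each component together with the boundedness of $\Ex{F\mid |\mC|=n}$, as in \eqref{eq:s3}. Your idea of absorbing the total-variation error into the constants of \eqref{eq:tailb} is fine, since the diameter never exceeds $n$ and hence $\exp(-cn)\le\exp(-cx^2/n)$ in the relevant range.
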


The idea behind the scaling limit is that the graph $\mD_n$ contracts to a single point when rescaled by $n^{-1/2}$, and hence the Gromov--Hausdorff--Prokhorov distance between the rescaled versions of $\mA_{n - h_n}^\omega$ and  $\mC_n^\omega$ tends in probability to zero. Hence we may build upon previous Gromov--Hausdorff limits in the rooted case~\cite[Thm. 6.14]{2015arXiv150402006S}, which we extend in a non-trivial way to obtain convergence in the rooted  Gromov--Hausdorff--Prokhorov sense. This form of limit yields precise asymptotic expressions for the diameter of $\mC_n^\omega$ and distances between $k$ independently sampled random points, see for example~\cite[Prop. 10]{MR2571957} for a justification in a more general context. For example, it follows from~Theorem~\ref{te:scalinglimit} that the graph distance $d_{\mC_n^\omega}(v^1, v^2)$ of two independently and uniformly selected points $v^1, v^2 \in \mC_n^\omega$ satisfies \begin{align}2 c_{\omega} d_{\mC_n^\omega}(v^1, v^2)/ \sqrt{n} \convdis \text{Rayleigh}(1)\end{align} for a Rayleigh-distributed limit, given by its probability density $x\exp(-x^2/2)$. Note that the tail-bound ~\eqref{eq:tailb} implies that the distance $d_{\mC_n^\omega}(v^1, v^2)$ is $p$-uniformly integrable for any $p \in \ndN$, yielding
\begin{align}
	\Ex{  d_{\mC_n^\omega}(v^1, v^2)^p } \sim  n^{p/2} 2^{-p/2} c_\omega^{-p}\Gamma(1 + p/2).
\end{align}
The Gromov--Hausdorff--Prokhorov universality class of the Brownian continuum random tree (and other continuous limit objects) was also studied in a recent work on Voronoi tesselations~\cite{soda_accepted}, and the results given there also apply to the random graph $\mC_n^\omega$ by Theorem~\ref{te:scalinglimit}.

Among the many classes of graphs that satisfy the tree-like assumption~\eqref{eq:H}, uniform unlabelled outerplanar graphs have received particular attention in  \cite{MR2350456}. This corresponds to the case where we assign weight $1$ to each graph that may be drawn in the plane such that no edges intersect and each vertex lies on the frontier of the outer face. All other graphs receive weight $0$. We  derive a numeric approximation of the scaling factor for this class of graphs.

\begin{proposition}
	\label{pro:proconst}
The scaling constant $c_{\omega_\scO}$ of the class of unlabelled outerplanar graphs is approximately given by \[c_{\omega_\scO} \approx 0.9864689.\]
\end{proposition}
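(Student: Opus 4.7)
The plan is to reduce the computation to the rooted model and then carry out an explicit numerical evaluation based on the known algebraic description of $2$-connected outerplanar graphs. By Theorem~\ref{te:approx} the random graph $\mC_n^\omega$ is, up to exponentially small total variation error, given by $\mD_n + \mA^\omega_{n-d_n}$ with $|\mD_n| = O_p(1)$. Since any fixed-size attachment contributes only $o(\sqrt{n})$ to all graph distances and $d_n = O_p(1)$, the scaling constant $c_\omega$ for $\mC_n^\omega$ in Theorem~\ref{te:scalinglimit} coincides with the corresponding scaling constant from the rooted scaling limit established in~\cite[Thm.~6.14]{2015arXiv150402006S}. Thus it suffices to evaluate the latter constant for the case $\iota = \one_{\text{outerplanar}}$.

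The second step is to make the formula for $c_\omega$ explicit. In the rooted subcritical setup, the random graph $\mA_n^\omega$ may be encoded by a Galton--Watson tree describing how blocks and vertices alternate along its block-decomposition, and $c_\omega$ equals $\sigma_\omega/2$ (with the standard $2\me$-normalization of the CRT), where $\sigma_\omega^2$ is the variance of the offspring law of this tree. Concretely, $\sigma_\omega$ can be written in terms of the radius of convergence $\rho_\scA$ of $\tilde{\scA}^{\omega_\scO}(z)$, the value $y_0 = \tilde{\scA}^{\omega_\scO}(\rho_\scA)$, and the first two partial derivatives of the composed block-generating function appearing in the exponential defining $g(x,y)$ in~\eqref{eq:H}. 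These are the quantities one must determine numerically.

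The third step is to compute the inputs for the outerplanar class. Recall that the $2$-connected outerplanar graphs with $k \ge 3$ vertices correspond bijectively to dissections of a convex $k$-gon, and their generating function admits a closed algebraic form (quadratic in one variable); $K_2$ contributes separately. Substituting this explicit expression into the cycle index sums defining $Z_{(\scB')^\iota}$ and using the Pólya-type species expansion one obtains an implicit algebraic equation for $\tilde{\scA}^{\omega_\scO}(z)$. Numerically solving the subcritical fixed point characterization (the vanishing of $1 - $ the derivative of the composed block function at $y_0$) yields $\rho_\scA$ and $y_0$.

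The main obstacle will be the bookkeeping: one must consistently track unlabelled versus labelled, rooted versus cycle-pointed, and even-exponent versus odd-exponent cycle-index contributions through the species composition, and verify that the normalization convention matches the one used in Theorem~\ref{te:scalinglimit}. Once the formulas are assembled correctly, plugging the numerical values of $\rho_\scA$, $y_0$, and the derivatives of the exponent in $g$ into $c_\omega = \sigma_\omega/2$ gives the stated numerical value $c_{\omega_\scO} \approx 0.9864689$.
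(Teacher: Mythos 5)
There is a genuine gap at your second step. The scaling constant is not $\sigma_\omega/2$ with $\sigma_\omega^2$ the offspring variance of the encoding Galton--Watson tree; passing from the block tree to the graph metric requires two further ingredients, and your plan captures at most one of them. The formula the paper uses (from the rooted case) is
\begin{align*}
c_\omega \;=\; \frac{\sqrt{(1+\Ex{\zeta})\,\Va{\xi}}}{2\,\Ex{\eta}},
\end{align*}
where $\xi,\zeta$ come from the branching-process encoding (so $\Va{\xi}$ and $\Ex{\zeta}$ can indeed be expressed through partial derivatives $E_{uu}$ and $E_z$ of the composed block series, roughly as you suggest), but $\Ex{\eta}$ is the expected \emph{graph distance} between the two marked vertices in a Boltzmann-distributed unlabelled bi-pointed derived block $(\scB'^\bullet)^{\iota}$ with parameter $\tilde{\scA}^\omega(\rho_\scA)$. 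This factor converts tree distances into graph distances (for trees every block is an edge and $\Ex{\eta}=1$, which is why your formula looks plausible there), and it cannot be read off from the block generating function counted by vertices alone: it requires knowledge of the internal metric of a size-biased block, i.e.\ a refinement of the species data beyond what enters $g(x,y)$ or the singular system determining $\rho_\scA$ and $\tilde{\scA}^\omega(\rho_\scA)$. For outerplanar graphs $\Ex{\eta}\approx 5.04$, so omitting it is not a bookkeeping issue but an error of roughly a factor five; your outline could not reproduce the stated value.

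Computing $\Ex{\eta}$ is in fact the main technical content of the paper's proof and is absent from your plan. In the unlabelled setting one must sample the bi-pointed block together with a symmetry, which forces a combinatorial (not merely enumerative) derivation of the cycle index sum $Z_{(\scB')^{\iota_\scO}}$: the paper re-derives it by decomposing dissections according to whether the symmetry is trivial or a reflection whose axis passes through the $*$-vertex (three cases $Z_0,Z_1,Z_2$ contributing to the bi-pointed sum), and then evaluates the conditional expected distances case by case --- $\Ex{\eta_1}=1$, a geometric-type computation for $\Ex{\eta_2}$ along the symmetry axis, and for the asymmetric case $\Ex{\eta_0}$ a reduction to labelled marked dissections solved via the linear system from the labelled outerplanar analysis. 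Only after these are combined with the numerically computed $\rho_\scA$, $\tilde{\scA}^\omega(\rho_\scA)$, $\tilde{\scA}^{\omega^2}(\rho_\scA^2)$, $E_z$ and $E_{uu}$ does one obtain $\Ex{\eta}\approx 5.0386$ and hence $c_{\omega_\scO}\approx 0.9864689$. Your first step (reduction to the rooted constant via Theorem~\ref{te:approx}) and your use of the dissection algebra to locate $\rho_\scA$ and $\tilde{\scA}^{\omega}(\rho_\scA)$ numerically are consistent with the paper, but without the $\Ex{\eta}$ analysis the argument does not go through.
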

 This differs from the case of random labelled outerplanar graphs for which the constant is approximately given by $\approx 0.960$ \cite[Prop. 8.6]{inannals} and the case of random outerplanar maps for which it equals $9 / (7 \sqrt{2}) \approx 0.909$ \cite{caraceni2016,stufler2017}.

\begin{theorem}
	\label{te:bslimit}
	Suppose that \eqref{eq:H} holds. Then there is a locally finite limit graph $\hat{\mC}^\omega$ with a distinguished vertex $\hat{v} \in \hat{\mC}^\omega$ such that $\mC_n^\omega$ converges toward $(\hat{\mC}^\omega, \hat{v})$ in the Benjamini--Schramm sense. Even stronger, if $v_n$ denotes uniformly at random drawn vertex from $\mC_n^\omega$, and $k_n = o(\sqrt{n})$ is a fixed deterministic sequence of non-negative integers, then 
	\begin{align}
	\lim_{n \to \infty} d_{\textsc{TV}}( U_{k_n}(\mC_n^\omega, v_n), U_{k_n}(\hat{\mC}^\omega, \hat{v})) = 0,
	\end{align}
	with $U_{k_n}(\cdot, \cdot)$ denoting the subgraph induced by all vertices with distance at most $k_n$ from the specified vertex.
\end{theorem}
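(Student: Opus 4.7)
The plan is to transfer the corresponding Benjamini--Schramm statement from the rooted model $\mA_n^\omega$, for which local weak limits around uniformly selected vertices have already been established (see \cite{2015arXiv150402006S,2017arXiv170100973G}), to the unrooted model $\mC_n^\omega$ by invoking Theorem~\ref{te:approx}. The exponentially small coupling error in \eqref{eq:tv} is negligible at every polynomial scale, so throughout I may identify $\mC_n^\omega$ with $\mD_n + \mA_{n - d_n}^\omega$ and ignore events of probability $\exp(-cn)$.

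Under this identification, a uniformly random vertex $v_n$ of $\mC_n^\omega$ becomes a uniformly random vertex of $\mD_n + \mA_{n-d_n}^\omega$. Since $d_n = O_p(1)$, the probability that $v_n$ lies in the small piece $\mD_n$ is $O_p(1/n)$, and conditionally on the complementary event $v_n$ is uniform on the vertex set of $\mA_{n-d_n}^\omega$. So up to a vanishing total variation error I may treat $v_n$ as a uniform vertex of $\mA_{n-d_n}^\omega$.

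Let $o_n$ denote the root of $\mA_{n-d_n}^\omega$ (the vertex identified with the root of $\mD_n$). On the event $\{d_{\mA_{n-d_n}^\omega}(v_n, o_n) > k_n\}$ the neighborhoods $U_{k_n}(\mD_n + \mA_{n-d_n}^\omega, v_n)$ and $U_{k_n}(\mA_{n-d_n}^\omega, v_n)$ coincide, since the attached piece $\mD_n$ is only visible once the exploration reaches $o_n$. The Gromov--Hausdorff--Prokhorov scaling limit of $\mA_n^\omega$ from \cite[Thm. 6.14]{2015arXiv150402006S}, transferred through the coupling from deterministic size $n$ to random size $n - d_n$, implies that $d_{\mA_{n-d_n}^\omega}(v_n, o_n)/\sqrt{n}$ converges in distribution to an almost surely positive random variable; hence for any $k_n = o(\sqrt{n})$ the bad event has vanishing probability, so the two neighborhoods agree with high probability.

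It remains to invoke the existing Benjamini--Schramm convergence of $\mA_n^\omega$ around a uniformly random vertex \cite{2015arXiv150402006S}, which identifies the limit graph $(\hat{\mC}^\omega, \hat{v})$ and gives total variation convergence of $k_n$-neighborhoods up to radii $k_n = o(\sqrt{n})$; concatenating the three reductions yields the theorem. The main obstacle is to verify that the total variation rate in the rooted case really does extend to this full range of radii, which requires combining the classical local weak limit with quantitative control on the block-cutpoint tree to ensure that a ball of radius $k_n = o(\sqrt{n})$ around a typical vertex remains in a tree-like regime, disjoint from the atypical structure near $o_n$. The technical shift from deterministic size $n$ to random size $n - d_n$ with $d_n = O_p(1)$ is harmless and follows by conditioning on $d_n$ together with the continuity of the underlying estimates in the size parameter.
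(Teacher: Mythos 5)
Your proposal follows essentially the same route as the paper's proof: reduce to the rooted model via Theorem~\ref{te:approx}, note that a uniform vertex lies in $\mA_{n-d_n}^\omega$ with high probability and that its $k_n$-neighbourhood avoids the root, and then invoke the strong Benjamini--Schramm convergence of $\mA_n^\omega$. The only substantive difference is that the ``main obstacle'' you flag (total variation convergence of $o(\sqrt{n})$-neighbourhoods in the rooted case) is precisely the content of \cite[Thm.~6.13]{2015arXiv150402006S} and requires no additional block-tree analysis, and the root-avoidance step is likewise quoted directly from \cite[Thm.~6.8]{2015arXiv150402006S} in the paper rather than derived from the scaling limit as you do (note that \cite[Thm.~6.14]{2015arXiv150402006S} is a Gromov--Hausdorff limit without a measure, so your derivation would really rest on the rooted GHP extension, Lemma~\ref{le:lemma} of the present paper, which is proved independently, so no circularity arises).
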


The Benjamini--Schramm convergence is deduced by observing that the  $k_n$-neighbourhood of a uniformly at random drawn vertex $v_n \in \mH_n + \mA_{n - h_n}^\omega$ lies with high probability entirely in $\mA_{n - h_n}^\omega$ and hence it suffices to establish local convergence for $\mA_{n - h_n}^\omega$. This is achieved by using the stronger form of  Benjamini--Schramm convergence for $\mA_n^\omega$ established in S. \cite{2015arXiv150402006S}. As a byproduct, we obtain that the Benjamini--Schramm limits of $\mA_n^\omega$ and $\mC_n^\omega$ agree. For the case where $\iota(B) \in \{0,1\}$ for all $B$ and for which the $\omega$-weight of all trees is positive, we hence recover and extend the Benjamini--Schramm limit of uniform random unlabelled unrooted graphs from subcritical classes that was  established by Georgakopoulos and Wagner \cite[Thm. 4.4]{2015arXiv151203572G} and corresponds to convergence of neighbourhoods with  constant radius instead of $o(\sqrt{n})$. We remark that the almost sure convergence stated in \cite[Thm. 4.5]{2015arXiv151203572G} may also be deduced from Theorem~\ref{te:bslimit} using Skorokhod's representation theorem. In detail: Benjamini--Schramm convergence of the sequence $\mC_n^\omega$ corresponds to weak convergence of the rooted graphs $(\mC_n^\omega, v_n)$ interpreted as random points in the Polish space $(\mathbb{B}, d_{\text{loc}})$ of connected rooted unlabelled locally finite graphs, with the metric $d_{\text{loc}}$ given in Equation~\eqref{eq:benjamini} below. Stating that the sequence $(\mC_n^\omega,v_n)$ converges almost surely is syntactically incorrect, unless we construct the limit and individual graphs on the same probability space. By Skorokhod's representation theorem~\cite[Thm. 3.3]{MR0310933} and the weak convergence $(\mC_n^\omega, v_n) \convdis (\hat{\mC}^\omega, \hat{v})$ it follows that there exist identically distributed copies $\mC_n^* \eqdist (\mC_n^\omega, v_n)$, $n \ge 1$, and $\mC^* \eqdist (\hat{\mC}, \hat{v})$ that are defined on a common probability space and satisfy $\mC_n^* \to \mC^*$ almost surely. 

An advantage of the approach taken in this project is that we obtain precise descriptions of~\emph{both} the asymptotic local and global geometric shape. In order to prevent and rectify certain misconceptions, we emphasize that the scaling limit of random \emph{labelled} graphs from subcritical classes by Panagiotou, S., and Weller~\cite{inannals} does not encompass and is not encompassed by the results of the present paper on random~\emph{unlabelled} graphs. Selecting a random labelled graph  uniformly at random from a class of graphs that is closed under relabelling corresponds to placing a bias that is inversely proportional to the size of its automorphism group, and random unlabelled graphs do not exhibit this bias. The unreasonable yet prevalent misconception that graphs in this context typically have no symmetries is easily rebuked by noting the change of growth constants in the labelled and unlabelled setting~\cite{MR2873207}.

Our last result is an observation on random graphs that are not necessarily connected. That is, random unlabelled elements from the class $\scG^\omega$ of $\omega$-weighted graphs, where the $\omega$-weight of such a graph is defined as the product of $\omega$-weights of its connected components. Similarly as in the approximation of unrooted graphs by rooted graphs in Theorem~\ref{te:approx}, the following limit allows us to transfer "practically every" asymptotic property from the connected to the disconnected regime. % A theorem due to Barbour and Granovsky \cite[Thm. 2.3]{MR2121024} gives us precise information on the asymptotic distribution of the component sizes, yielding the following result.

\begin{corollary}
	\label{co:components}
	Let $\mG_n^\omega$ denote the random unlabelled graph with $n$ vertices sampled with probability proportional to the product of $\omega$-weights of its connected components. If requirement \eqref{eq:H} holds, then the largest connected component $\mC_n$ of $\mG_n^\omega$ has size $|\mC_n| = n + O_p(1)$. More precisely, let  $d \ge 1$ denote the unique largest constant such that all finite graphs connected graphs with positive $\omega$-weight have size in the lattice $1 + d\ndZ$. Then for each $0 \le a < d$ there is a finite random graph $\mG_a$ such that
	\begin{align}
	\mG_n^\omega - \mC_n  \convdis \mG_a
	\end{align}
	as $n$ tends to infinity on the lattice  $a + d\ndN$. Here weak convergence is to be understood in the usual sense, that is, of random elements of the countable set of unlabelled finite graphs. The distribution of $\mG_a$ has Boltzmann-type:
	\[
		\Pr{\mG_a = G} = \omega(G) \rho_\scA^{|G|} \left ( \sum_{H, |H| \equiv a-1 \mod d} \omega(H) \rho_\scA^{|H|} \right)^{-1}, \qquad |G| \equiv a-1 \mod d.
	\]
	Here the sum index $H$ ranges over all unlabelled graphs with size in the lattice $a-1 + d \ndZ$.%, and $\rho_\scA$ denotes the constant from Theorem~\ref{te:block}.
\end{corollary}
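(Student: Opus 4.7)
The plan is to interpret $\mG_n^\omega$ as a subcritical Gibbs partition based on the class $\scC^\omega$ of $\omega$-weighted connected unlabelled graphs, and then to invoke the general framework developed in \cite{2016arXiv161001401S}. Since any unlabelled graph decomposes uniquely as a multiset of its connected components and $\omega$ is multiplicative over components, Pólya's enumeration gives
\[
\tilde{\scG}^\omega(z) = \exp\left(\sum_{i \ge 1} \frac{1}{i} \tilde{\scC}^{\omega^i}(z^i)\right),
\]
so that $\mG_n^\omega$ may equivalently be sampled by drawing a random multiset of component sizes from the induced Gibbs distribution on integer partitions of $n$ and then independently filling each block with a Boltzmann-distributed connected graph of the corresponding size, the higher-index terms $\tilde{\scC}^{\omega^i}(z^i)/i$ encoding components that appear with multiplicity.

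The next step is to verify that this Gibbs partition is \emph{convergent} and \emph{subcritical} in the precise sense required in \cite{2016arXiv161001401S}. The singular analysis carried out in the proof of Theorem~\ref{te:approx} (in particular the cycle-pointing decomposition and the associated bivariate asymptotics near $\rho_\scA$) yields a square-root type singularity of $\tilde{\scC}^\omega$ at the radius of convergence $\rho_\scA$ with $\tilde{\scC}^\omega(\rho_\scA) < \infty$. The third part of \eqref{eq:H}, namely $Z_{\scC^\omega}(0, (\rho_\scA + \epsilon)^2, (\rho_\scA + \epsilon)^3, \ldots) < \infty$, says exactly that the higher-power remainder $\sum_{i \ge 2} \tilde{\scC}^{\omega^i}(z^i)/i$ has radius of convergence strictly greater than $\rho_\scA$, which ensures that components appearing with multiplicity contribute only an $O_p(1)$ number of extra vertices.

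With subcriticality in hand, the convergent subcritical Gibbs partition theorem of \cite{2016arXiv161001401S} then yields both assertions of the corollary: a single giant component $\mC_n$ of size $|\mC_n| = n - O_p(1)$ emerges, and the multiset of the remaining fragments converges in total variation to a Boltzmann-distributed random multiset of connected graphs with tuning parameter $\rho_\scA$. Translating back from ``multiset of connected components'' to ``unlabelled graph'' via multiplicativity of $\omega$ gives exactly the Boltzmann formula $\Pr{\mG_a = G} \propto \omega(G) \rho_\scA^{|G|}$ stated in the corollary. The lattice index $d$ enters only through a conditioning on residues: since positive-weight connected graphs have sizes in $1 + d \ndZ$, the giant component satisfies $|\mC_n| \equiv 1 \pmod d$ almost surely, so for $n \equiv a \pmod d$ the fragment has size $\equiv a - 1 \pmod d$, which explains the restriction of the normalising sum to that residue class.

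The main obstacle will be the second step, namely extracting from condition \eqref{eq:H} and from the machinery underlying Theorem~\ref{te:approx} the precise singular expansion of $\tilde{\scC}^\omega$ at $\rho_\scA$, including the finiteness $\tilde{\scC}^\omega(\rho_\scA) < \infty$, in exactly the form required to trigger the abstract subcritical Gibbs partition theorem. Once this analytic input is supplied, the emergence of the giant component and the explicit Boltzmann description of the small-fragment limit follow as essentially formal consequences of the general framework.
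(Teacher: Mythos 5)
Your proposal is correct and follows essentially the same route as the paper: there, too, $\scG^\omega$ is treated as a multiset of $\scC^\omega$-components, the analytic input is the coefficient asymptotics $[z^n]\tilde{\scC}^\omega(z)\sim c\, n^{-5/2}\rho_\scA^{-n}$ obtained from the cycle-pointed decomposition behind Theorem~\ref{te:approx} (Equation~\eqref{eq:cyc}), and the conclusion is drawn from the Gibbs-partition results of \cite{2016arXiv161001401S}, generalized via a species $\Set_a$ to handle the lattice case $d\ge 1$. Two cosmetic imprecisions only: the singularity type corresponding to $n^{-5/2}$ has exponent $3/2$ rather than square-root, and the third condition in \eqref{eq:H} is not literally the statement that $\sum_{i\ge 2}\tilde{\scC}^{\omega^i}(z^i)/i$ is analytic beyond $\rho_\scA$ (it concerns fixed-point-free symmetries of connected graphs); that analyticity does follow from \eqref{eq:H}, e.g.\ via the coefficientwise bound $\tilde{\scC}^{\omega^i}\le\tilde{\scA}^{\omega^i}$, but not as an exact restatement.
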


Compare with a result for the number of connected components for the case of random unlabelled outerplanar graphs in~\cite[Thm. 5.1]{MR2350456} and under a general smoothness condition given in~\cite[Chap. 4, Sec. 6.4]{MR3560309}. It follows  from Corollary~\ref{co:components} that  the limit Theorems \ref{te:scalinglimit} and \ref{te:bslimit} also hold for the largest component of~$\mG_n^\omega$, and, if we permit disconnected graphs in the notion of local weak convergence, we may also consider Theorem~\ref{te:bslimit} as a limit theorem for the random graph $\mG_n^\omega$. 

A  similar result was observed for random \emph{labelled} graphs from small block-stable classes in S. \cite[Thm. 4.2]{Mreplaceme}, which generalized McDiarmid's~\cite{MR2507738} previous results on random graphs from proper minor-closed addable graph classes. It is natural to expect that, similar as in the labelled case, the tree-like condition~\eqref{eq:H} is not required for Corollary~\ref{co:components} to hold, and may be replaced by merely requiring the series $\tilde{\scA}^\omega(z)$ to have positive radius of convergence. A promising natural setting or level of abstraction for pursuing this question appears to be a probabilistic context. Specifically, this conjecture may be affirmed if a  generalization of the result \cite[Lem. 3.3]{Mreplaceme}  to partition functions of certain sesqui-type branching processes is possible, as these sequences describe the coefficients of $\tilde{\scA}^\omega(z)$ as a special case (see \cite{2015arXiv150402006S} for details). These processes are also of interest in their own right, and have received attention in recent literature by Janson, Riordan, and Warnke~\cite{2017arXiv170600283J}.

%\textcolor{red}{add what this generalizes}
%\subsection*{Methodology}
%The methods used in the present work include \textcolor{red}{...}

\section{Probabilistic graph limits}

\subsection{Distributional convergence}
\label{sec:lowe}
Given two connected, rooted, and locally finite graphs $G^\bullet = (G, v_G)$ and $
H^\bullet = (H, v_H)$ we may consider their distance
\begin{align}
\label{eq:benjamini}
d_{\text{loc}}(G^\bullet, H^\bullet) =  2^{-\sup \{k \in \ndN_0 \,\mid\, U_k(G^\bullet) \simeq U_k(H^\bullet) \}}
\end{align}
with  $U_k(G^\bullet) \simeq U_k(H^\bullet)$ denoting isomorphism of rooted graphs. This defines a premetric on the collection of all rooted locally finite connected graphs.  Two such graphs have distance zero, if and only if they are isomorphic. Hence we obtain a metric on the collection $\mathbb{B}$ of all unlabelled, connected, rooted, locally finite graphs.  There are some set-theoretic caveats that actually require us to work with a set of representatives instead of a collection of proper classes, but we may safely ignore this purely notational issue.

Weak convergence of a sequence $(\mG_n, v_n)_{n \ge 1}$ of random pointed graphs in $\mathbb{B}$ is also called \emph{local weak convergence}. In the special case where for each $n$ the random graph $\mG_n$ is almost surely finite, and the root $v_n$ is selected uniformly at random from its vertices, it also called \emph{distributional} or \emph{Benjamini--Schramm} convergence. 

%This notion of convergIf the root degrees additionally satisfy some moment conditions, it readily yields laws of large numbers for additive graph parameters such as the number of spanning trees or subgraph count asymptotics \cite{MR2160416, 2015arXiv150408103K}.

\subsection{Gromov--Hausdorff--Prokhorov convergence}
\label{sec:ghc}

Most parts of the present exposition follow~\cite[Sec. 6]{MR2571957}. Given two compact subsets $K_1, K_2$ of a metric space $(X, d_X)$, we may consider their \emph{Hausdorff distance}
\[
d_{\mathrm{H}}(K_1, K_2) = \inf\{\epsilon >0 \mid K_2 \subset U_\epsilon(K_1), K_1 \subset U_\epsilon(K_2)\}.
\]
Here  $U_\epsilon(K) := \{ x \in X \mid d_X(x,K) < \epsilon\}$ denotes the $\epsilon$-thickening of a subset $K \subset X$.   The \emph{Prokhorov distance} between two Borel probability measures $P_1, P_2$ on  $X$ is defined by
\[
d_{\mathrm{H}}(P_1, P_2) = \inf \{\epsilon >0 \mid \text{ for all $A\subset X$ closed: } P_1(A) \le P_2(U_\epsilon(A)) + \epsilon, P_2(A) \le P_1(U_\epsilon(A)) + \epsilon\}.
\]
Let $(X, d_X, P_X)$, $(Y, d_Y, P_Y)$ be compact metric spaces equipped with Borel probability measures. For any metric space $(E, d_E)$ and isometric embeddings $\iota_X: X \to E$ and $\iota_Y: Y \to E$ we may consider the push-forward measures $P_X \iota_X^{-1}$ and $P_Y \iota_Y^{-1}$. The \emph{Gromov--Hausdorff--Prokhorov} (GHP) distance  between the two  spaces is given by
\[
d_{\mathrm{GHP}}((X, d_X, P_X), (Y, d_Y, P_Y)) = \inf_{(E, d_E), \iota_X, \iota_Y}  \min(d_{\mathrm{H}}(\iota_X(X), \iota_Y(Y)), d_{\mathrm{P}}(P_X \iota_X^{-1}, P_Y \iota_Y^{-1})  ),
\]
with the index ranging over all possible isometric embeddings $\iota_X, \iota_Y$ of $X$ and $Y$ into any possible common metric space $(E, d_E)$.

The GHP distance satisfies the axioms of a premetric on the collection of compact metric spaces equipped with Borel probability measures. The corresponding metric on the quotient space $\mathbb{K}$ is complete and separable. That is, $\mathbb{K}$ is a Polish space. For set-theoretic reasons, we would actually have to work with a set of representatives instead of a collection of proper class, but this a purely notational issues that we may safely ignore.

We are usually not going to distinguish between a measured compact metric space  and the corresponding equivalence class. Also, whenever there is no risk of confusion, we will write $\lambda X$ instead of  $(X, \lambda d_X, P_X)$ for any scalar factor $\lambda >0$ and any compact metric space $(X, d_X)$ equipped with a Borel probability measure~$P_X$.  

If we distinguish points $x_0 \in X$ and $y_0 \in Y$ we may also form the \emph{rooted Gromov--Hausdorff--Prokhorov}-distance $d^c_{\mathrm{GHP}}(X^\bullet, Y^\bullet)$ between the rooted spaces $X^\bullet = (X, x_0)$ and $Y^\bullet = (Y, y_0)$ by
\[
\inf_{(E, d_E), \iota_X, \iota_Y}  \min(d_{\mathrm{H}}(\iota_X(X), \iota_Y(Y)), d_{\mathrm{P}}(P_X \iota_X^{-1}, P_Y \iota_Y^{-1}), d_E(\iota_X(x_0), \iota_Y(y_0)) ).
\]
The rooted GHP-distance $d^c_{\mathrm{GHP}}$ satisfies analogous properties as $d_{\mathrm{GHP}}$, see \cite[Thm. 2.3]{MR3035742} for details.

\section{The block-decomposition of cycle pointed graphs}

In order to deal with the symmetries that complicate the analysis of unlabelled graphs, we will make use of enumerative and probabilistic aspects of the theory of species. In Appendix~\ref{sec:species} we summarize some tools and notions of this theory that we require in the proofs of our main results, and provide further references for a detailed introduction to the topic. A reader with a strong understanding of the symbolic method may skip its lecture and directly proceed with the present section.

%The author's inclination would be to refer the reader to the literature \cite{MR633783,MR2810913} for all details, but this would not be fair towards readers with a more probabilistic background. Hence the present chapter is intended to summarize the combinatorial aspects necessary to understand the proofs of our main results. A reader with a strong understanding of the symbolic method may safely skip this chapter.

\subsection{The block-tree}

A \emph{cut-vertex} of a connected graph is a vertex whose removal disconnects the graph. 
We say a graph is \emph{$2$-connected}, if it is connected, has at least $2$ vertices, but no cut-vertices. This includes the link-graph consisting of two edges joined by an edge.
A \emph{block} $B$ of a graph $G$ is a subgraph that is inclusion maximal with the property of being either an isolated vertex or $2$-connected.  Any two blocks  overlap in at most one vertex. The cut-vertices of a connected graph are precisely the vertices that belong to more than one block. For any connected graph $C$  we may form the associated \emph{block-tree} $T(C)$ that comes with a bipartition of its vertices into two groups of vertices \cite[Ch. 3.1]{MR2744811}. One group corresponds to the blocks of $C$ and the other to its cut-vertices. The edges of the tree $T(C)$ are given by all pairs $\{v,B\}$ with $v$ a cut-vertex and $B$ a block  that contains the vertex $v$.

\subsection{Cycle pointing}

We recall the block-decomposition of cycle-pointed connected graphs given in Bodirsky, Fusy, Kang and Vigerske \cite[Prop. 28]{MR2810913} and check that its compatible with block-weightings. We assume familiarity with the cycle pointing operations, see Appendix~\ref{sec:species} and the references given therein. Let $\scB^\iota$ be the weighted species of graphs that are $2$-connected, and let $\scC^\omega$ be the weighted species of connected graphs with the $\omega$-weights given as in Equation~\eqref{eq:thweight}.

 Marking a connected graph at a $1$-cycle is equivalent to marking a vertex, and hence we may split $\scC^\circ$ into vertex-marked graphs from the weighted class $\scA^\omega$ and graphs marked with a cycle of length at least two: 
\begin{align}
\label{eq:begin}
(\scC^\circ)^\omega \simeq \scA^\omega + (\scC^\circledast)^\omega.
\end{align}

Let $C$ be connected graph that is marked at a cycle $\tau$ with at least two atoms. Then there exists an automorphism $\sigma$ of $C$ that has $\tau$ as one of its disjoint cycles. The automorphism $\sigma$ induces a canonical  isomorphism $\bar{\sigma}$ of the properly bicolored block-tree $T(C)$ whose, let's say, white vertices correspond to the blocks, and black vertices correspond to cutvertices of $C$. Any vertex $v$ of $C$ corresponds to a unique vertex $\bar{v}$ of $T(C)$, because either $v$ is a cutvertex and hence corresponds to a black vertex of $T(C)$, or $v$ is not a cutvertex, and hence is contained in a unique block of $C$ and hence corresponds to a white vertex of $T(C)$. Since $\tau$ is a cycle of $\sigma$, it follows that the vertices of $T(C)$ that correspond to the atoms of $\tau$ form a cycle $\bar{\tau}$ of the tree-automorphism $\bar{\sigma}$. The cycle $\bar{\tau}$ need not have the same length as the cycle $\tau$, as non-cutvertices of $\tau$ that lie in the same block get contracted to a single atom of $\bar{\tau}$. %has the same length as the cycle $\tau$, except for the case where all vertices of $\tau$ lie in a single block and hence $\bar{\tau}$ is a fixpoint of $\bar{\sigma}$.

\begin{figure}[t]
	\centering
	\begin{minipage}{1.0\textwidth}
		\centering
		\includegraphics[width=0.9\textwidth]{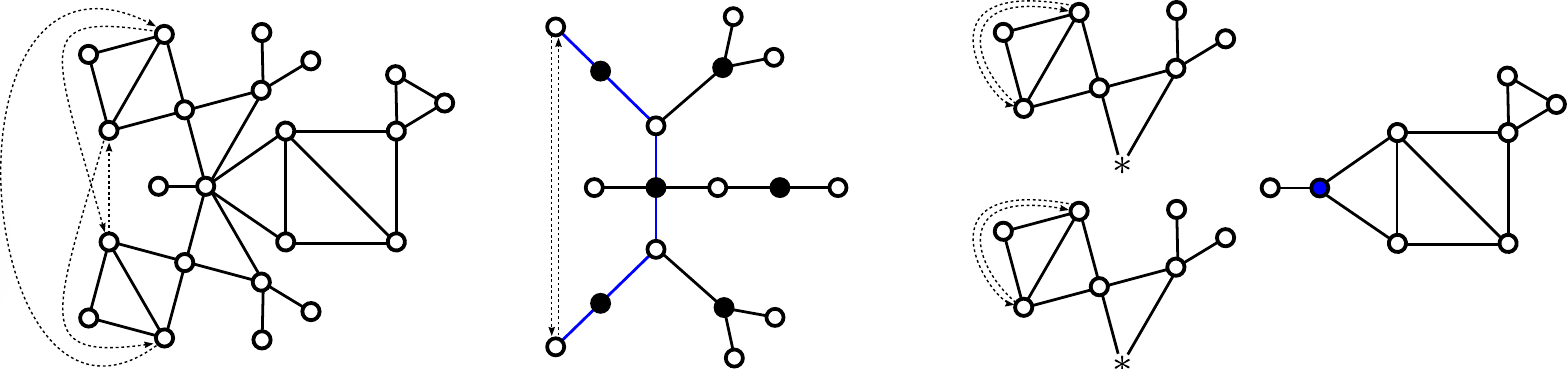}
		\caption{Decomposition of a cycle pointed graph with a cutvertex as cycle center.}
		\label{fi:cycle1}
	\end{minipage}
\end{figure}

For each atom $v$ of $\tau$ we may consider the unique path $P_v$ in the tree $T(C)$ that joins $\bar{v}$ the vertex $\tau(\bar{v})$ corresponding to the consecutive atom in the cycle. As $\bar{\sigma}$ permutes these path, they all have the same lengths. Note that either all or none of the vertices of $\tau$ are cutvertices, as the graph automorphism $\sigma$ permutes only cutvertices with cutvertices and non-cutvertices with non-cutvertices. Hence all vertices of $\bar{\tau}$ share the same colour. In a properly bicolored graph the distance between two vertices of the same colour is always an even number, hence each of the paths $P_v$ has an even number of edges and hence a unique center vertex. A general result given in \cite[Claim 22]{MR2810913} states that all connecting paths in a cycle pointed tree must share the same center, so we may consider {\em the} center vertex $u$ of the connecting paths in $T(C)$. Hence the species $\scC^\circledast$ may be split into two summands,
\begin{align}
\label{eq:split}
(\scC^\circledast)^\omega \simeq  (\scC^\circledast_v)^\omega + (\scC^\circledast_b)^\omega
\end{align}
corresponding to the subspecies where the center of the marked cycles is required to correspond to a cutvertex or to a block, respectively. Clearly the center vertex is a fixpoint of $\bar{\sigma}$, and this fact allows us to give explicit decompositions for both.

Let us first consider the case where the center $u$ corresponds to a cutvertex $v_\tau$. Each branch $A$ of the rooted tree $(T(C), u)$ corresponds to graph $G(A)$ with a distinguished vertex that corresponds to the vertex $v_\tau$ and is not a cut-vertex of $G(A)$. In order to keep the label sets disjoint, we label this vertex by a $*$-place-holder instead of $v_\tau$. Hence any branch is simply a derived block from $\scB'$ where  each non-$*$-vertex gets identified with the root of a connected rooted graph. In other words, its a $\scB' \circ \scA$-object. Moreover, the whole graph consists simply of the center vertex $v_\tau$ together with an unordered symmetrically cycle pointed collection of $\scB' \circ \scA$-objects. The $\omega$-weight of $C$ is the product of the $\omega$-weights of the branches, and each automorphism of $C$ having $\tau$ as its cycle leaves the center vertex $v_c$ invariant. Furthermore, the $\omega$-weight of a branch is the product of the $\iota$-weight of the derived block and the $\omega$-weight of the attached rooted connected graphs. Hence
\begin{align}
\label{eq:vcenter1}
(\scC^\circledast_v)^\omega \simeq (\Set^\circledast \circledcirc ((\scB')^\iota \circ \scA^\omega)) \star \scX,
\end{align}
where the factor $\scX$ corresponds to the center vertex. We may write
\begin{align}
\Set^\circledast \simeq \scP \star \Set
\end{align}
with $\scP$ denoting the cycle pointed species consisting only of marked cycles with length at least two, which simplifies \eqref{eq:vcenter1} to
\begin{align}
\label{eq:cv}
(\scC^\circledast_v)^\omega \simeq (\scP \circledcirc ((\scB')^\iota \circ \scA^\omega)) \star \scA^\omega.
\end{align}
This corresponds to the fact that the center together with the branches without any atoms of $\tau$ form a connected rooted graph without any further restrictions, and the remaining branches together with the marked cycle $\tau$ correspond to a $\scP \circledcirc \scB' \circ \scA$ object. Furthermore, this object may be composed out of a single cycle pointed $(\scB' \circ \scA)^\circ$ object by constructing $\tau$ according to the cycle composition construction, see Figure~\ref{fi:cycle1} for an illustration.

\begin{figure}[t]
	\centering
	\begin{minipage}{1.0\textwidth}
		\centering
		\includegraphics[width=0.9\textwidth]{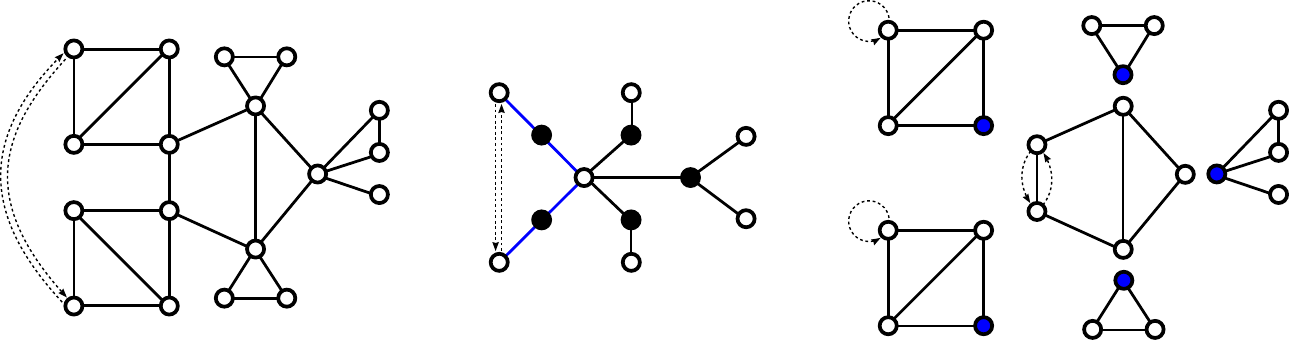}
		\caption{Decomposition of a cycle pointed graph with a block as cycle center.}
		\label{fi:cycle2}
	\end{minipage}
\end{figure}

Finally, consider the case where the center $u$ corresponds to a block $B$ instead of  a cutvertex. There is a natural marked cycle $\tau_B$ on the block $B$. It is given by the cycle $\tau$ if $\tau$ lies entirely in $B$. Otherwise, it is given by the cutvertices of $C$ that are contained in $B$ and belong to those branches in $T(C)$, that are adjacent to the center $u$ and contain atoms of the induced cycle $\bar{\tau}$. This is because the induced automorphism $\bar{\tau}$ of $T(C)$ permutes the branches containing atoms of $\bar{\tau}$ cyclically. The graph automorphism $\sigma$ maps the vertex set of $B$ to itself, and hence induces an automorphism $\sigma|_B$ of the block $B$. The cycle $\tau_B$ is one of the disjoint cycles of $\sigma_B$. Hence  $(B, \tau_B)$ is a cycle pointed block. The graph $C$ may be decomposed into the cycle pointed block $(B, \tau_B)$, where each vertex $v$ of $B$ is identified with the root of a connected rooted graph $C_v$. %The constraints on the attached connected graphs correspond precisely to those of the cycle pointed decomposition, that is, 
The marked cycle $\tau_B$ and the rooted graphs corresponding to it are composed out of a single cycle pointed rooted connected graph $C_\tau$ according to the cycle composition construction (see Figure~\ref{fi:cycle2} below for an illustration). % and any two graphs $C_v$ and $C_w$, that correspond to vertices of $B$ outside of the cycle $\tau_B$, are only allowed to be non-isomorphic, if their roots $v$ and $w$ belong to different disjoint cycles of at at least one automorphism of $B$ that has $\tau_B$ as one its disjoint cycles. 
Furthermore, the $\omega$-weight of the graph $C$ is given by the product of the $\iota$-weight of $B$ and the $\omega$-weights of the attached graphs $(C_v)_{v \in B}$. Summing up, we obtain the decomposition
\begin{align}
\label{eq:block}
(\scC^\circledast_b)^\omega \simeq (\scB^\circledast)^\iota \circledcirc \scA^\omega.
\end{align}

\section{Forming roots}
This section provides a proof for Theorem~\ref{te:approx}, showing that unlabelled unrooted graphs may be approximated in a geometric sense by vertex rooted pendants. Throughout we assume that Assumption~\eqref{eq:H} is satisfied.

\subsection{The case of a vertex cycle-center}

Equation~\eqref{eq:cv} states that an unlabelled symmetrically cycle-pointed weighted connected graph may be decomposed uniquely in a weight-preserving manner into an unlabelled rooted graph and an unlabelled graph from the species $\scP \circledcirc ((\scB')^\iota \circ \scA^\omega)$. We are going to verify that the sum of the weights of $n$-sized graphs from this species is exponentially smaller than the sum of the $\omega$-weights of $n$-sized unlabelled rooted graphs. This may then be used to show that large random unlabelled graphs from $(\scC_v^\circledast)^\omega$ consist of a large rooted graph together with a stochastically bounded rest attached to its root.

Recall that $d$ denotes the span of the support of the generating series $\tilde{\scC}^\omega(z)$. That is, $d \ge 1$ is minimal with the property that the exponents with non-zero coefficients belong to the lattice $1 + d \ndZ$. By a standard result due to  Bell, Burris and Yeats \cite[Thm. 28]{MR2240769}
we know that the tree-like assumption~\eqref{eq:H} implies that there is a constant $c_{\scA}>0$ such that
\begin{align}
\label{eq:asymptotics}
[z^n] \tilde{\scA}^\omega(z) \sim c_{\scA} n^{-3/2} \rho_\scA^{-n}
\end{align}
as $n \equiv 1 \mod d$ becomes large. 

\begin{lemma}
	\label{le:vc}
	The ordinary generating series
	\[
		\widetilde{\scP \circledcirc ((\scB')^\iota \circ \scA^\omega)}(z) = \sum_{i=2}^\infty \bar{Z}_{((\cB')^\circ)^\iota}(\tilde{\scA}^{\omega^i}(z^i), (\tilde{\scA}^\circ)^{\omega^i}(z^i); \tilde{\scA}^{\omega^{2i}}(z^{2i}), (\tilde{\scA}^\circ)^\omega(z^{2i}); \ldots )
	\]
	has radius of convergence strictly larger than $\rho_{\scA}$.
\end{lemma}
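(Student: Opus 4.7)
The plan is to identify the displayed series as the cycle-pointed analogue of the second summand in the exponent defining $g(x,y)$ in assumption~\eqref{eq:H}, and then to upgrade the finiteness at $y=\rho_\scA+\epsilon$ to convergence of the cycle-pointed series at some intermediate $y_0\in(\rho_\scA,\rho_\scA+\epsilon)$ by means of Cauchy-type estimates applied to the partial derivatives of $Z_{(\scB')^\iota}$ that appear once cycle pointing is unfolded.

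Concretely, I would first invoke the general cycle-pointing calculus of Bodirsky, Fusy, Kang, and Vigerske to write the bivariate cycle-pointed index as a sum of partial derivatives,
\[\bar Z_{((\scB')^\circ)^\iota}(s_1,t_1;s_2,t_2;\ldots)=\sum_{k\ge 1} k\,t_k\,\partial_{s_k}Z_{(\scB')^\iota}(s_1,s_2,\ldots).\]
Substituting the prescribed arguments turns the $i$-th summand of the lemma into a sum over $k\ge 1$ of $k\,(\tilde\scA^\circ)^{\omega^{ki}}(z^{ki})$ times $\partial_{s_k}Z_{(\scB')^\iota}$ evaluated at $(\tilde\scA^{\omega^{ji}}(z^{ji}))_{j\ge 1}$. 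Setting $y_1=\rho_\scA+\epsilon$ and $y_0=\rho_\scA+\epsilon/2$, non-negativity of all coefficients yields the monotonicity-based Cauchy estimate
\[\partial_{s_k}Z_{(\scB')^\iota}(a_1,a_2,\ldots)\le\frac{Z_{(\scB')^\iota}(b_1,b_2,\ldots)}{b_k-a_k},\]
with $a_j=\tilde\scA^{\omega^{ji}}(y_0^{ji})$ and $b_j=\tilde\scA^{\omega^{ji}}(y_1^{ji})$. Because $\tilde\scA^{\omega^{ji}}$ and $(\tilde\scA^\circ)^{\omega^{ji}}$ both carry the monomial $z^1$, the gap $b_k-a_k$ and the factor $(\tilde\scA^\circ)^{\omega^{ki}}(y_0^{ki})$ behave in the tails like $y_1^{ki}-y_0^{ki}$ and $y_0^{ki}$ respectively.

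Putting the bounds together, the $k$-sum at each fixed $i\ge 2$ is dominated by $Z_{(\scB')^\iota}(\tilde\scA^{\omega^i}(y_1^i),\tilde\scA^{\omega^{2i}}(y_1^{2i}),\ldots)$ multiplied by a convergent factor of order $\sum_k k/((y_1/y_0)^{ki}-1)$, which in addition decays in $i$. Summing over $i\ge 2$ and comparing with the series $\sum_{i\ge 2}\frac{1}{i}Z_{(\scB')^\iota}(\tilde\scA^{\omega^i}(y_1^i),\tilde\scA^{\omega^{2i}}(y_1^{2i}),\ldots)$, which is finite at $y_1=\rho_\scA+\epsilon$ by~\eqref{eq:H}, yields finiteness of the whole cycle-pointed series at $z=y_0$, so its radius of convergence is at least $y_0>\rho_\scA$.

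The main obstacle is obtaining Cauchy gaps $b_k-a_k$ uniformly sufficient in $k$ for the inner sum to converge at a rate still compatible with the outer sum over $i$, and verifying that the cycle-pointing machinery really does reduce to the claimed partial-derivative formula in the presence of both the $\iota$-weights and the outer cycle composition with $\scP$. Once these technical points are in place, the remaining estimates are elementary tail bounds.
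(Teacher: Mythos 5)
Your strategy is sound and technically different from the paper's, though it rests on the same two ideas. The paper never evaluates any series before the final step: it compares rooted symmetries of $((\scB')^\circ)^\iota$ with plain symmetries of $(\scB')^\iota$ coefficient by coefficient, using that an unlabelled $\scA$-structure of size $\ell$ has exactly $\ell$ cycle pointings and that each symmetry of size $k$ admits exactly $k$ rootings, to obtain $[z^b]\bar{Z}_{((\scB')^\circ)^\iota}(\ldots)\le b^2\,[z^b]Z_{(\scB')^\iota}(\ldots)$; the polynomial factor $b^2 i$ is then absorbed by the geometric ratio $\bigl((\rho_\scA+\epsilon/2)/(\rho_\scA+\epsilon)\bigr)^{bi}$ and \eqref{eq:H} gives finiteness. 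You instead unfold the pointing via the (correct, weight-compatible) identity $\bar{Z}_{((\scB')^\circ)^\iota}(s_1,t_1;s_2,t_2;\ldots)=\sum_{k\ge1}k\,t_k\,\partial_{s_k}Z_{(\scB')^\iota}(s_1,s_2,\ldots)$ and control the evaluated derivatives by monotonicity/convexity estimates; note that \eqref{eq:H} does give $Z_{(\scB')^\iota}(\tilde{\scA}^{\omega^i}(y_1^i),\tilde{\scA}^{\omega^{2i}}(y_1^{2i}),\ldots)<\infty$ for every $i\ge2$, so your Cauchy bound is legitimate, and your final comparison with $\sum_{i\ge2}\tfrac1i Z_{(\scB')^\iota}(\ldots)<\infty$ is exactly the paper's. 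Both proofs come down to ``pointing costs at most a linear factor in the size, and a strictly smaller evaluation point beats any polynomial factor''; your route buys nothing extra but is equally elementary.

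There is, however, one step whose justification fails as stated. The claim that $(\tilde{\scA}^\circ)^{\omega^{ki}}(y_0^{ki})$ behaves like $y_0^{ki}$ (and the gap $b_k-a_k$ like $y_1^{ki}-y_0^{ki}$) is not true in general: the weights $\iota$, hence $\omega$, may exceed $1$, so the higher-order terms of $\tilde{\scA}^{\omega^{ki}}$ can dominate the linear one uniformly in $ki$ (for instance if some rooted graph $A_0$ of size $2$ has $\omega(A_0)y_1^{2}$ close to $1$, then $(\tilde{\scA}^\circ)^{\omega^{j}}(y_0^{j})$ grows like $(\omega(A_0)y_0^2)^j$, which is much larger than $y_0^j$ when $y_1^2<y_0$). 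The repair is to compare the pointed series with the increment term by term rather than through the linear terms: writing $c_\ell=[z^\ell]\tilde{\scA}^{\omega^{ki}}(z)$ and $r=y_1/y_0>1$, the inequality $r^{ki\ell}-1\ge\ell\,(r^{ki}-1)$ gives
\begin{align*}
(\tilde{\scA}^\circ)^{\omega^{ki}}(y_0^{ki})=\sum_{\ell\ge1}\ell\,c_\ell\,y_0^{ki\ell}
\le\frac{1}{r^{ki}-1}\sum_{\ell\ge1}c_\ell\bigl(y_1^{ki\ell}-y_0^{ki\ell}\bigr)
=\frac{b_k-a_k}{(y_1/y_0)^{ki}-1},
\end{align*}
valid whatever the size of the weights. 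Combined with your Cauchy estimate this yields precisely your claimed bound $Z_{(\scB')^\iota}(b_1,b_2,\ldots)\sum_{k\ge1}k/\bigl((y_1/y_0)^{ki}-1\bigr)$ for the $i$-th summand; since $(y_1/y_0)^{ki}-1\ge\bigl((y_1/y_0)^{i}-1\bigr)(y_1/y_0)^{i(k-1)}$, this factor is finite and decays geometrically in $i$, in particular is $O(1/i)$, so summing against $\sum_{i\ge2}\tfrac1i Z_{(\scB')^\iota}(\ldots)<\infty$ from \eqref{eq:H} finishes the proof. With this termwise comparison in place of the heuristic tail claim, your argument is complete.
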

\begin{proof}
	For any $k \ge 1$ and any rooted symmetry $(B, \sigma, \tau, v) \in \RSym(((\scB')^\circ)^\iota)[k]$ it holds that
	\begin{align*}	
	[z^b] {(\tilde{\scA}^\circ)^{\omega^{|\tau|}}}(z^{|\tau|}) ({\tilde{\scA}^{\omega}}(z))^{\sigma_1} \cdots  ({\tilde{\scA}^{\omega^{|\tau|}}}(z^{|\tau|}))^{\sigma_{|\tau|}-1}  \cdots ({\tilde{\scA}^{\omega^k}} (z^k))^{\sigma_k} 
\le b [z^b] ({\tilde{\scA}^{\omega}}(z))^{\sigma_1}   \cdots ({\tilde{\scA}^{\omega^k}} (z^k))^{\sigma_k}.
	\end{align*}
Here we have applied the fact that each unlabelled $\scA$-structure of size $\ell\ge1$ has precisely $\ell$ cycle pointings. To any symmetry of size $k$ correspond precisely $k$ rooted symmetries. It follows that for any $b \ge 1$
\begin{multline*}
	[z^b] \sum_{k \ge 1} \frac{1}{k!} \sum_{ (B, \sigma, \tau, v) \in \RSym(((\scB')^\circ)^\iota)[k]} \iota(B){(\tilde{\scA}^\circ)^{\omega^{|\tau|}}}(z^{|\tau|}) ({\tilde{\scA}^{\omega}}(z))^{\sigma_1} \cdots  ({\tilde{\scA}^{\omega^{|\tau|}}}(z^{|\tau|}))^{\sigma_{|\tau|}-1}  \cdots ({\tilde{\scA}^{\omega^k}} (z^k))^{\sigma_k} \\
	\le [z^b] \sum_{k \ge 1} \frac{bk}{k!} \sum_{ (B, \sigma) \in \Sym((\scB')^\iota)[k]} ({\tilde{\scA}^{\omega}}(z))^{\sigma_1}   \cdots ({\tilde{\scA}^{\omega^k}} (z^k))^{\sigma_k}.
\end{multline*}
Only summands with  $[z^b]({\tilde{\scA}^{\omega}}(z))^{\sigma_1}   \cdots{\tilde{\scA}^{\omega^k}} (z^k)^{\sigma_k} \ne0$ contribute. As $\sigma_1 + 2 \sigma_2 + \ldots + k\sigma_k = k$, this means that we only need to consider summands where $k \le b$. Thus
\begin{align}
[z^b]\bar{Z}_{((\cB')^\circ)^\iota}(\tilde{\scA}^\omega(z), (\tilde{\scA}^\circ)^\omega(z); \tilde{\scA}^\omega(z^{2}), (\tilde{\scA}^\circ)^\omega(z^{2}); \ldots ) \le  b^2 [z^b]Z_{(\cB')^\iota}(\tilde{\scA}^\omega(z), \tilde{\scA}^\omega(z^{2}), \ldots).
\end{align}
It follows that for any $\epsilon>0$
\begin{align}
\label{eq:bound}
\widetilde{\scP \circledcirc ((\scB')^\iota \circ (\scA)^\omega)}\left(\rho_\scA + \frac{\epsilon}{2}\right) &\le \sum_{i \ge 2} \sum_{b \ge 1} b^2 \left(\rho_\scA + \frac{\epsilon}{2}\right)^{bi} [z^{bi}] Z_{(\cB')^\iota}(\tilde{\scA}^\omega(z^i), \tilde{\scA}^\omega(z^{2i}), \ldots) \nonumber \\
&=  \sum_{i \ge 2} \frac{1}{i} \sum_{b \ge 1} b^2i \left(\frac{\rho_\scA + \frac{\epsilon}{2}}{\rho_\scA + \epsilon}\right)^{bi} (\rho_\scA + \epsilon)^{bi} [z^{bi}] Z_{(\cB')^\iota}(\tilde{\scA}^\omega(z^i), \tilde{\scA}^\omega(z^{2i}), \ldots).
\end{align}
Clearly it holds that $bi \left(\frac{\rho_\scA + \frac{\epsilon}{2}}{\rho_\scA + \epsilon}\right)^{bi} < 1$ for all but finitely many pairs $(i,b)$. It follows by the assumption~\eqref{eq:H} that the upper bound in \eqref{eq:bound} is finite for $\epsilon$ small enough.
\end{proof}

The asymptotic expansion~\eqref{eq:asymptotics} and Lemma~\ref{le:vc} allow us to apply a standard result~\cite[Thm. VI.12]{MR2483235} on the coefficients of products of power series, yielding
\begin{align}
\label{eq:smallv}
[z^n] (\tilde{\scC}_v^\circledast)^\omega(z) \sim \widetilde{\scP \circledcirc ((\scB')^\iota \circ \scA^\omega)}(\rho_\scA)[z^n]\scA^\omega(z)
\end{align}
as $n$ becomes large. We may apply this to show that large random $(\scC_v^\circledast)^\omega$-objects look like large $\scA^\omega$-objects with a stochastically bounded rest attached to the root.

\begin{lemma}
	\label{le:app1}
The  $\scP \circledcirc ((\scB')^\iota \circ \scA^\omega)$-object corresponding to a random unlabelled $n$-sized $(\scC_v^\circledast)^\omega$-object that is sampled with probability proportional to its weight has stochastically bounded size.
\end{lemma}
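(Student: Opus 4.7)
}
The decomposition \eqref{eq:cv} expresses an unlabelled $n$-sized $(\scC_v^\circledast)^\omega$-object as a pair consisting of a $\scP \circledcirc ((\scB')^\iota \circ \scA^\omega)$-object $P$ of some size $K_n$ and an unlabelled rooted graph $A \in \scA^\omega$ of size $n - K_n$, with joint weight $\omega(P)\omega(A)$. The plan is to read off the distribution of $K_n$ from this decomposition and show tightness using the fact (Lemma~\ref{le:vc}) that the generating series of $\scP \circledcirc ((\scB')^\iota \circ \scA^\omega)$ has radius of convergence strictly larger than $\rho_\scA$.

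First I would write
\[
\Pr{K_n = k} = \frac{p_k \, a_{n-k}}{c_n}, \qquad p_k = [z^k]\widetilde{\scP \circledcirc ((\scB')^\iota \circ \scA^\omega)}(z),\ a_n = [z^n]\tilde{\scA}^\omega(z),\ c_n = [z^n](\tilde{\scC}_v^\circledast)^\omega(z),
\]
which holds provided $n - k \equiv 1 \pmod d$; otherwise the probability is zero. I would then combine the asymptotic expansion \eqref{eq:asymptotics} with the equivalence \eqref{eq:smallv} to pass to the pointwise limit
\[
\lim_{n \to \infty}\Pr{K_n = k} = \frac{p_k \rho_\scA^k}{\widetilde{\scP \circledcirc ((\scB')^\iota \circ \scA^\omega)}(\rho_\scA)}
\]
along the lattice $k \equiv n-1 \pmod d$, obtaining a proper Boltzmann-type limit law because Lemma~\ref{le:vc} guarantees the denominator is finite and $\sum_k p_k \rho_\scA^k$ converges.

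To upgrade pointwise convergence to the tightness needed for stochastic boundedness, I would split the tail $\Pr{K_n \ge K}$ according to whether $k \le n/2$ or $k > n/2$. In the first regime, \eqref{eq:asymptotics} gives $a_{n-k}/a_n \le C\rho_\scA^k$ uniformly, so (using also $c_n \ge c \,n^{-3/2}\rho_\scA^{-n}$ from \eqref{eq:smallv}) the contribution is bounded by $C' \sum_{k \ge K} p_k \rho_\scA^k$, which is arbitrarily small for large $K$ by Lemma~\ref{le:vc}. In the second regime, Lemma~\ref{le:vc} yields a $\rho' > \rho_\scA$ with $p_k \le C (\rho')^{-k}$, and bounding $a_{n-k} \le C'' \rho_\scA^{-(n-k)}$ together with the lower bound on $c_n$ controls the contribution by $C''' n^{3/2}(\rho_\scA/\rho')^{n/2}$, which vanishes uniformly in $K$ as $n \to \infty$. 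Choosing $K$ large and then $n$ large completes the uniform tail bound.

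The main obstacle is the second regime $k > n/2$, where the standard quotient $a_{n-k}/a_n$ is no longer controlled by a geometric factor because $a_{n-k}$ lies outside the asymptotic regime of \eqref{eq:asymptotics}. Here I would rely crucially on the \emph{strict} inequality $\rho' > \rho_\scA$ from Lemma~\ref{le:vc}, which converts the coefficient bound $p_k \le C(\rho')^{-k}$ into an exponentially small contribution that absorbs the polynomial factor $n^{3/2}$; the rest of the argument is routine book-keeping once this gap is in hand.
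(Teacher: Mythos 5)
Your proposal is correct and follows essentially the same route as the paper: the coefficient quotient for $\Pr{K_n = k}$, the asymptotics \eqref{eq:asymptotics}, the product estimate \eqref{eq:smallv}, and Lemma~\ref{le:vc} give exactly the Boltzmann-type pointwise limit used there. The only difference is in the last step, where you verify tightness by an explicit two-regime tail bound, while the paper simply observes that the limit probabilities sum to $1$, so the pointwise limit is already a proper weak limit and stochastic boundedness follows at once.
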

\begin{proof}
	By the asymptotic expansions~\eqref{eq:asymptotics} and \eqref{eq:smallv} it follows that the probability for this component to have size $k$ is asymptotically given by 
	\[
		\frac{\left([z^{n-k}]\tilde{\scA}^\omega(z)\right) \left([z^k]\widetilde{\scP \circledcirc ((\scB')^\iota \circ \scA^\omega)}(z)  \right) }{[z^n] \tilde{\scC}_v^\circledast(z)} \to \frac{\rho_\scA^k [z^k]\widetilde{\scP \circledcirc ((\scB')^\iota \circ \scA^\omega)}(z)}{\widetilde{\scP \circledcirc ((\scB')^\iota \circ \scA^\omega)}(\rho_\scA)}.
	\]
	As the limit probabilities sum to $1$, this implies that the component size has a finite weak limit and is hence stochastically bounded. (In fact, this even shows that the component converges weakly to a limit graph following a Boltzmann distribution for unlabelled $\scP \circledcirc ((\scB')^\iota \circ \scA^\omega)$-objects with parameter $\rho_\scA$.) 
\end{proof}

\subsection{The case of a block cycle center}

We start with the following subcriticality observation.

\begin{lemma}
	\label{le:subc}
	The bivariate power series
	\begin{align*}
	f(x,y) := \bar{Z}_{(\scB^\circledast)^\iota}( x, 0; \tilde{\scA}^{\omega^2}(y^2), (\tilde{\scA}^\circ)^{\omega^2}(y^2); \tilde{\scA}^{\omega^3}(y^3), (\tilde{\scA}^\circ)^{\omega^3}(y^3); \ldots  )
	\end{align*}
	satisfies
	$
	f(\tilde{\scA}^\omega(\rho_\scA) + \epsilon, \rho_\scA+ \epsilon) < \infty
	$
	for some $\epsilon>0$.
\end{lemma}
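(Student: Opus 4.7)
The plan is to rewrite $f(x,y)$ as a total $y$-derivative of a single cycle-index substitution, so that finiteness reduces to showing that this substitution converges at some point strictly larger than the target. A rooted symmetry of $(\scB^\circledast)^\iota$ of size $k$ consists of a symmetry $(B,\sigma)$ of $\scB^\iota$, a cycle $\tau$ of $\sigma$ of length $i := |\tau|\ge 2$, and an atom of $\tau$; summing the $i\,\sigma_i$ pointings over each fixed $(B,\sigma)$ yields
\begin{align*}
\bar{Z}_{(\scB^\circledast)^\iota}(s_1, t_1; s_2, t_2; \ldots) = \sum_{i \ge 2} i\, t_i\, \partial_{s_i} Z_{\scB^\iota}(s_1, s_2, \ldots).
\end{align*}
Together with the relation $(\tilde{\scA}^\circ)^{\omega^i}(y^i) = \tfrac{1}{i}\, y \, \tfrac{d}{dy}\tilde{\scA}^{\omega^i}(y^i)$, which follows from counting the $\ell$ cycle-pointings of each size-$\ell$ unlabelled $\scA^{\omega^i}$-object, the chain rule collapses the defining expression of $f$ into
\begin{align*}
f(x,y) = y\, \frac{d}{dy}\,G(x,y), \qquad G(x,y) := Z_{\scB^\iota}\bigl(x,\, \tilde{\scA}^{\omega^2}(y^2),\, \tilde{\scA}^{\omega^3}(y^3),\, \ldots\bigr).
\end{align*}
Since $G(x_0, \cdot)$ is a power series in $y$ with non-negative coefficients, it suffices to exhibit some $y_1 > y_0 := \rho_\scA + \epsilon$ at which $G(x_0, y_1)$ is finite.

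Let $\epsilon_0>0$ witness assumption~\eqref{eq:H}, and set $\epsilon := \epsilon_0/3$, $y_1 := \rho_\scA + \epsilon_0/2$, so that $y_0 < y_1 < \rho_\scA+\epsilon_0$ and $x_0 := \tilde{\scA}^\omega(\rho_\scA)+\epsilon < \tilde{\scA}^\omega(\rho_\scA)+\epsilon_0$. Finiteness of $g(\tilde{\scA}^\omega(\rho_\scA)+\epsilon_0,\rho_\scA+\epsilon_0)$ gives
\begin{align*}
\partial_{s_1}Z_{\scB^\iota}\bigl(\tilde{\scA}^\omega(\rho_\scA)+\epsilon_0,\,\tilde{\scA}^{\omega^2}((\rho_\scA+\epsilon_0)^2),\,\ldots\bigr) = Z_{(\scB')^\iota}(\cdots) < \infty,
\end{align*}
and coordinatewise monotonicity of series with non-negative coefficients makes $\partial_{s_1}Z_{\scB^\iota}(x, \tilde{\scA}^{\omega^2}(y^2),\ldots)$ also finite at the componentwise smaller point $(x_0, y_1)$. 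Integrating in the first argument from $0$ to $x_0$, and using that $\partial_{s_1}Z_{\scB^\iota}$ is non-decreasing in its first argument,
\begin{align*}
G(x_0, y_1) \le G(0, y_1) + x_0 \cdot \partial_{s_1} Z_{\scB^\iota}\bigl(x_0,\,\tilde{\scA}^{\omega^2}(y_1^2),\,\ldots\bigr);
\end{align*}
the second summand is finite, so it remains to bound $G(0, y_1)$.

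The main delicate step is bounding $G(0, y_1)$, and this is where the third condition of~\eqref{eq:H} enters. Every automorphism of a rooted-graph structure fixes the root, hence $Z_{\scA^\omega}(0, z_2, z_3, \ldots) = 0$. The weighted Pólya plethysm applied to the composition $\scB^\iota \circ \scA^\omega$ therefore gives
\begin{align*}
G(0, y) = Z_{\scB^\iota}\bigl(0,\,\tilde{\scA}^{\omega^2}(y^2),\,\tilde{\scA}^{\omega^3}(y^3),\,\ldots\bigr) = Z_{\scB^\iota \circ \scA^\omega}(0, y^2, y^3, \ldots).
\end{align*}
Now $\scB^\iota \circ \scA^\omega$ is the species of pairs $(C,B)$ consisting of a connected graph $C$ together with a distinguished block $B$, and a $(\scB^\iota \circ \scA^\omega)$-automorphism is an automorphism of $C$ that preserves $B$. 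For a fixed $(C,\tilde\sigma)$ the number of admissible block choices equals the number of $\tilde\sigma$-invariant blocks of $C$, which is in turn at most $|V(C)|$. This yields
\begin{align*}
G(0, y) \le \sum_{(C,\tilde\sigma):\tilde\sigma_1=0} \frac{\omega(C)\,|V(C)|}{|V(C)|!}\, y^{|V(C)|} = y\,\frac{d}{dy}\bigl[Z_{\scC^\omega}(0, y^2, y^3, \ldots)\bigr].
\end{align*}
By the third condition of~\eqref{eq:H}, $Z_{\scC^\omega}(0, y^2, y^3, \ldots)$ is finite at $y = \rho_\scA+\epsilon_0$, so as a power series in $y$ it has radius of convergence at least $\rho_\scA+\epsilon_0 > y_1$, and its derivative at the interior point $y_1$ is finite. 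Hence $G(0,y_1) < \infty$, and combining with the previous paragraph $G(x_0, y_1) < \infty$, so that $f(x_0,y_0) = y_0\,\partial_y G(x_0,y_0) < \infty$ as required.
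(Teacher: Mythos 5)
Your proposal is correct and is essentially the paper's proof in a slightly slicker packaging: the two parts of assumption \eqref{eq:H} enter exactly as in the paper, with $Z_{(\scB')^\iota}=\partial_{s_1}Z_{\scB^\iota}$ handling the positive powers of $x$ and the identification $Z_{\scB^\iota}(0,\tilde{\scA}^{\omega^2}(y^2),\tilde{\scA}^{\omega^3}(y^3),\ldots)=Z_{\scB^\iota\circ\scA^\omega}(0,y^2,y^3,\ldots)$ together with the at-most-$|V(C)|$-blocks bound handling the $x^0$ part. The only real difference is that you package the paper's coefficient-wise inequalities (its factors $b(a+b)$ and $b^3$) as the exact identity $f=y\,\partial_y G$ with $G(x,y)=Z_{\scB^\iota}(x,\tilde{\scA}^{\omega^2}(y^2),\ldots)$ and absorb the extra derivative by evaluating strictly inside the radius of convergence, which is the same mechanism of trading polynomially growing factors against a slightly smaller argument.
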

\begin{proof} %It suffices to consider the case $\bar{Z}_{(\scB^\circledast)^\iota} \ne 0$. 
%	The claim follows from the bound
%	\[
%	[x^a y^b] f(x,y) \le b(a+b) [x^a y^b]g(x,y), \qquad a,b \in \ndN_0
%	\]
%	and the tree-like assumption~\eqref{eq:H}. In order to verify this bound, note that 
Any unlabelled $\scA$-structure of size $\ell\ge1$ has precisely $\ell$ cycle pointings. Hence for any $k \ge 1$ and any rooted symmetry $(B, \sigma, \tau, v) \in \RSym(\scB^\circledast)[k]$ it holds that
	\begin{multline*}	
	[x^a y^b] x^{\sigma_1} {(\tilde{\scA}^\circ)^{\omega^{|\tau|}}}(y^{|\tau|}) ({\tilde{\scA}^{\omega^2}}(y^2))^{\sigma_2} \cdots  ({\tilde{\scA}^{\omega^{|\tau|}}}(y^{|\tau|}))^{\sigma_{|\tau|}-1}  \cdots{\tilde{\scA}^{\omega^k}} (y^k)^{\sigma_k} \\
	\le b [x^a y^b] x^{\sigma_1}({\tilde{\scA}^{\omega^2}}(y^2))^{\sigma_2}   \cdots{\tilde{\scA}^{\omega^k}} (y^k)^{\sigma_k}.
	\end{multline*}
	 Any symmetry from $\Sym(\scB)[k]$ corresponds to precisely $k$ rooted symmetries, so any non-trivial symmetry may correspond to at most $k$ rooted symmetry from the symmetrically cycle-pointed species $(\scB^\circledast)^\iota$. Hence
	\begin{align*}
[x^a y^b] f(x, y) &=[x^ay^b] \quad \smashoperator{\sum_{ \substack{k \ge 2 \\ (B, \sigma, \tau, v) \in \RSym(\scB^\circledast)[k]}}} \quad \frac{\iota(B)}{k!}x^{\sigma_1} {(\tilde{\scA}^\circ)^{\omega^{|\tau|}}}(y^{|\tau|}) ({\tilde{\scA}^{\omega^2}}(y^2))^{\sigma_2} \cdots  ({\tilde{\scA}^{\omega^{|\tau|}}}(y^{|\tau|}))^{\sigma_{|\tau|}-1}  \cdots{\tilde{\scA}^{\omega^k}} (y^k)^{\sigma_k} \\
	&\le [x^ay^b] \sum_{k \ge 2} \frac{bk}{k!} \sum_{(B, \sigma) \in \Sym(\scB)[k] } \iota(B)x^{\sigma_1}({\tilde{\scA}^{\omega^2}}(y^2))^{\sigma_2}   \cdots{\tilde{\scA}^{\omega^k}} (y^k)^{\sigma_k}.
\end{align*}
	We may neglect any summands where $\sigma_1 \ne a$ or $[y^b]({\tilde{\scA}^{\omega^2}}(y^2))^{\sigma_2}   \cdots{\tilde{\scA}^{\omega^k}} (y^k)^{\sigma_k} =0$. Since it holds that $\sigma_1 + 2 \sigma_2 + \ldots + k\sigma_k = k$, this means that we only need to consider summands where $k \le a+b$. Thus
	\begin{align}
		\label{eq:a}
		[x^ay^b]f(x,y) \le b(a+b) [x^ay^b] Z_{\scB^\iota}(x, \tilde{\scA}^{\omega^2}(y^2), \tilde{\scA}^{\omega^3}(y^3), \ldots). 
	\end{align}
	It follows from the identity $Z_{(\scB')^\iota}(s_1, s_2,\ldots) = \frac{\partial}{\partial s_1} Z_{\scB^\iota}(s_1, s_2,\ldots)$  that for $a>0$ 
	\begin{align}
		\label{eq:b1}
		[x^ay^b] f(x,y) \le \frac{b(a+b)}{a} [x^{a-1}y^{b}]Z_{(\scB')^\iota}(x, \tilde{\scA}^{\omega^2}(y^2), \tilde{\scA}^{\omega^3}(y^3), \ldots).
	\end{align}
	In order to treat the case $a=0$, we observe that the series $Z_{\scB^\iota}(0, \tilde{\scA}^{\omega^2}(y^2), \tilde{\scA}^{\omega^3}(y^3), \ldots)$ is the sum of weight-monomials of all fixed-point-free symmetries of block-rooted connected graphs. We may convince  ourselves of this fact as follows. Block-rooted connected graphs consist of a block with rooted graphs attached to it, so they correspond to the composition species $\scB^\iota \circ \scA^\omega$. By the composition formula the cycle index sum of this species is given by
	\[
	Z_{\scB^\iota}(Z_{\scA^\omega}(s_1,s_2, \ldots), Z_{\scA^{\omega^2}}(s_2,s_4,\ldots),Z_{\scA^{\omega^3}}(s_3,s_6,\ldots) \ldots).
	\] 
	If we want to sum only the weight-monomials of fixed-point-free symmetries, we have to make the substitution $s_1=0$. But $Z_{\scA^\omega}(0,s_2, \ldots)=0$ as any automorphism of a rooted graph from $\scA^\omega$ is required to fix the root. So 
		$
	Z_{\scB^\iota}(0, Z_{\scA^{\omega^2}}(s_2,s_4,\ldots),Z_{\scA^{\omega^3}}(s_3,s_6,\ldots) \ldots).
	$
	is the sum of weight-monomials of fixed-point-free symmetries of block-rooted graphs. If we want to index according to the number of vertices we have to make the substitution $s_i=y^i$ for all $i \ge 2$, yielding $Z_{\scB}(0, \tilde{\scA}^{\omega^2}(y^2), \tilde{\scA}^{\omega^3}(y^3), \ldots)$. 
	
	Now, any connected graph with $b$ vertices has at most $b$ blocks. Hence this series counts each fixed-point-free symmetry of a connected graph  with $b$ vertices (without a block-root) at most $b$ times, yielding
	\[
	[y^b]Z_{\scB^\iota}(0, \tilde{\scA}^{\omega^2}(y^2), \tilde{\scA}^{\omega^3}(y^3), \ldots) \le b [y^b] Z_{\scC^\omega}(0, y^2, y^3, \ldots).
	\]
	By~\eqref{eq:a} we may deduce
	\begin{align}
		\label{eq:b2}
		[y^b] f(0,y) \le b^3 Z_{\scC^\omega}(0, y^2, y^3, \ldots).
	\end{align}
	It follows from the bounds~\eqref{eq:b1}, \eqref{eq:b2} and the  tree-like requirement \eqref{eq:H} that $f(  \tilde{\scA}^\omega(\rho_\scA) + \epsilon, \rho_\scA + \epsilon) < \infty$ for some $\epsilon>0$.
\end{proof}

Note that it may happen that $\bar{Z}_{((\scB')^\circledast)^\iota} =0$. This is the case if we only assign positive $\iota$-weights to graphs with the property, that any automorphism with a fixed-point must be the trivial automorphism. There are even graphs like the Frucht graph who only admit the trivial automorphism, hence we have to be mindful of this possibility.

\begin{lemma}
	\label{le:analytic}
	If $\bar{Z}_{((\scB')^\circledast)^\iota} =0$, then the generating series $(\tilde{\scC}_b^\circledast)^\omega(z)$ is analytic at $\rho_\scA$.	
\end{lemma}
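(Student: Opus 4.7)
The plan is to rewrite $(\tilde{\scC}_b^\circledast)^\omega(z)$ in a form to which Lemma~\ref{le:subc} applies directly. Starting from the species isomorphism $(\scC_b^\circledast)^\omega \simeq (\scB^\circledast)^\iota \circledcirc \scA^\omega$ of Equation~\eqref{eq:block}, the cycle composition formula for bar cycle index sums yields
\[
(\tilde{\scC}_b^\circledast)^\omega(z) = \bar{Z}_{(\scB^\circledast)^\iota}\bigl(\tilde{\scA}^\omega(z), (\tilde{\scA}^\circ)^\omega(z);\, \tilde{\scA}^{\omega^2}(z^2), (\tilde{\scA}^\circ)^{\omega^2}(z^2);\, \ldots\bigr).
\]
Each monomial in $\bar{Z}_{(\scB^\circledast)^\iota}$ corresponds to a rooted symmetry $(B, \sigma, \tau, v)$ with $\iota(B) > 0$, where $\sigma$ is an automorphism of the block $B$ and $\tau$ is a cycle of $\sigma$ of length $|\tau| \geq 2$. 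The contribution is proportional to $\iota(B)\, s_1^{\sigma_1}\cdots$, with the marked cycle supplying a factor of the cycle-pointed generating series at index $|\tau|$.

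First I would unpack the hypothesis $\bar{Z}_{((\scB')^\circledast)^\iota} = 0$. An $((\scB')^\circledast)^\iota$-structure is a quadruple $(B, *, \sigma, \tau)$ where $*$ is a distinguished vertex, $\sigma$ is an automorphism of $B$ fixing $*$, and $\tau$ is a cycle of $\sigma$ of length at least $2$. The vanishing of the bar cycle index sum therefore says that no such structure exists with $\iota(B) > 0$, which is equivalent to the statement that every non-identity automorphism of any positive $\iota$-weight block is fixed-point-free.

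Next I would apply this structural consequence inside $\bar{Z}_{(\scB^\circledast)^\iota}$: any rooted symmetry with $|\tau| \geq 2$ has a non-identity $\sigma$, and the hypothesis then forces $\sigma_1 = 0$ in every surviving monomial with $\iota(B) > 0$. Hence the variable $s_1$ never appears, and (since $|\tau| \geq 2$ already) neither does $t_1$. Substituting zero for both and comparing with the definition of $f$ in Lemma~\ref{le:subc} yields
\[
(\tilde{\scC}_b^\circledast)^\omega(z) = \bar{Z}_{(\scB^\circledast)^\iota}\bigl(0, 0;\, \tilde{\scA}^{\omega^2}(z^2), (\tilde{\scA}^\circ)^{\omega^2}(z^2);\, \ldots\bigr) = f(0, z).
\]

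Finally, Lemma~\ref{le:subc} guarantees $f(\tilde{\scA}^\omega(\rho_\scA) + \epsilon, \rho_\scA + \epsilon) < \infty$ for some $\epsilon > 0$. Since all coefficients of $f$ are nonnegative, this bound also holds at $(0, \rho_\scA + \epsilon)$, so the univariate series $f(0, z)$ has radius of convergence strictly larger than $\rho_\scA$ and is in particular analytic at $\rho_\scA$. The main obstacle is the combinatorial bookkeeping in the middle step: one must correctly translate the vanishing of $\bar{Z}_{((\scB')^\circledast)^\iota}$ into a statement about fixed points of automorphisms of blocks, and then use the presence of a marked cycle of length $\geq 2$ to force $\sigma_1 = 0$ uniformly on the support.
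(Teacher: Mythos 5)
Your proposal is correct and follows essentially the same route as the paper: both translate the hypothesis $\bar{Z}_{((\scB')^\circledast)^\iota}=0$ into the statement that the variables $s_1,t_1$ do not occur in $\bar{Z}_{(\scB^\circledast)^\iota}$ (you spell out the underlying fixed-point-free argument that the paper leaves implicit), deduce $(\tilde{\scC}_b^\circledast)^\omega(z)=f(0,z)$, and conclude that this series converges at $\rho_\scA+\epsilon$. The only cosmetic difference is that you invoke the stated conclusion of Lemma~\ref{le:subc} together with monotonicity in the first argument, whereas the paper cites the intermediate bound~\eqref{eq:b2} from that lemma's proof combined with condition~\eqref{eq:H}; both give the same finiteness.
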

\begin{proof}
	The assumption $\bar{Z}_{((\scB')^\circledast)^\iota} =0$ implies that $\bar{Z}_{(\scB^\circledast)^\iota}(x_1,y_1; x_2,y_2; \ldots) = \bar{Z}_{(\scB^\circledast)^\iota}(0,0; x_2,y_2; \ldots)$. Hence $(\tilde{\scC}_b^\circledast)^\omega(z) = f(0,z)$. By Inequality~\eqref{eq:b2} and the tree-like requirement \eqref{eq:H} we know that $f(0,z)$ has radius of convergence strictly larger than $\rho_\scA$, and consequently so does $(\tilde{\scC}_b^\circledast)^\omega(z)$.
\end{proof}

Let us assume for the remaining part of this subsection that $\bar{Z}_{((\scB')^\circledast)^\iota} \ne 0$. In~\cite[Thm. 3.1, Lem. 3.2]{2016arXiv161001401S} general results for the behaviour of component sizes and partitions functions of unlabelled composite structures were given. Lemma~\ref{le:subc} gives an analogous subcriticality condition to this setting, but for the cycle-pointed composition $(\scB^\circledast)^\iota \circledcirc \scA^\omega$ rather than a regular composition. However, the arguments used in \cite{2016arXiv161001401S} may be modified  to encompass the present setting. In the following we describe these modifications.

Decomposition~\eqref{eq:block} allows us to apply the substitution rule for Boltzmann samplers given in~\cite[Fig. 13]{MR2810913} in order to devise a sampling procedure for graphs from the class $(\scC_b^\circledast)^\omega$. (Be mindful that the arXiv version and journal version of \cite{MR2810913} have different numbering of theorems and figures. We refer to the version that got published by SIAM J. Comput. Furthermore, the results of \cite{MR2810913} are stated in a setting of species without weights, but the generalization to weighted species is straight-forward.)  This yields the following procedure which samples a random unlabelled $(\scC_b^\circledast)^\omega$-object $\mC$ with distribution given by \begin{align}
\label{eq:cc}\Pr{\mC = C} = \omega(C) \rho_\scA^{|C|} / \tilde{\scC}_b^\circledast(\rho_\scA).\end{align}
\begin{enumerate}
	\item Draw a rooted symmetry $(B, \sigma, \tau, v) \in \bigcup_{k \ge 0} \RSym(\scB^\circledast)[k]$ with probability proportional to the weight $\frac{\iota(B)}{|B|!} {(\tilde{\scA}^\circ)^{\omega^{|\tau|}}}(\rho_\scA^{|\tau|})(\tilde{\scA}^\omega(\rho_\scA))^{\sigma_1} ({\tilde{\scA}^{\omega^2}}(\rho_\scA^2))^{\sigma_2} \cdots  ({\tilde{\scA}^{\omega^{|\tau|}}}(\rho_\scA^{|\tau|}))^{\sigma_{|\tau|}-1}  \cdots{\tilde{\scA}^{\omega^{|B|}}} (\rho_\scA^{|B|})^{\sigma_{|B|}}$.
	\item For each unmarked cycle $c \ne \tau$ of $\sigma$ draw an unlabelled $\scA^\omega$-object $A_c$ with probability proportional to the weight $\omega(A_c)^{|c|}\rho_\scA^{|c||A_c|}$. Draw a cycle-pointed graph $(A_\tau, c_\tau)$ from the unlabelled $(\scA^\circ)^\omega$-objects with probability proportional to the weight $\omega(A_\tau)^{|\tau|}\rho_\scA^{|\tau| |A_\tau|}$.
	\item Construct the final graph $\mC$ by identifying for each cycle $c$ of $\sigma$ and each atom $u \in c$ (which is a vertex of $B$) the vertex $u$ with the root of a copy of $A_c$. The marked cycle of $\mC$ has length $|\tau||c_\tau|$ and is constructed in a certain way out of the atoms of the $|\tau|$ copies of the cycle $c_\tau$. (The precise way of composing this cycle is irrelevant for our following arguments. Hence we refer the reader to~\cite[Fig. 13]{MR2810913} for details.)
\end{enumerate}

We may split the third step into two steps 3' and 3'', where in step 3' we treat only cycles $c$ of $\sigma$ of length at least two, and in step 3'' we attach only the graphs $A_c$ for $c$ a fixed-point of $\sigma$. This way, we end up with a graph $\mH$ in step 3' having a number $F$ of marked vertices, each of which gets identified in step 3'' with the root of an independent copy of a random unlabelled $\scA$-object $\mA$ with distribution
\[
\Pr{\mA = A} = \omega(A)\rho_\scA^{|A|} / \tilde{\scA}^\omega(\rho_\scA).
\]
The joint probability generating series for $F$ and $H := |\mH| -F$  is given by
\begin{align}
	\label{eq:joint}
	\Ex{x^F y^H} = f( x\tilde{\scA}^\omega(\rho_\scA), y\rho_\scA ) / (\tilde{\scC}_b^\circledast)^\omega(\rho_\scA).
\end{align}
(Recall that the bivariate power series $f$ was defined in Lemma~\ref{le:subc}.) Lemma~\ref{le:subc} ensures that the vector $(F,H)$ has finite exponential moments. Let $(\mA_i)_{i \ge 1}$ denote independent copies of $\mA$. Then
\begin{align}
	\label{eq:comp}
	|\mC| \eqdist H + \sum_{i=1}^F |\mA_i|.
\end{align}
We are now in the same situation as in \cite[Eq. (4.2)]{2016arXiv161001401S}, yielding by analogous arguments as for \cite[Eq. (4.4)]{2016arXiv161001401S} that
\begin{align}
	\label{eq:pc}
	\Pr{|\mC|=n} \sim \Ex{F} \Pr{|\mA| = n}
\end{align}
as $n \equiv 1 \mod d$ tends to infinity. Using the identity $\bar{Z}_{((\scB')^\circledast)^\iota}(x_1, y_1; x_2, y_2; \ldots) = \frac{\partial}{\partial x_1} \bar{Z}_{(\scB^\circledast)^\iota}(x_1, y_1; x_2, y_2; \ldots)$  this may be expressed in terms of coefficients of generating series by
\begin{align}
	\label{eq:T3}
	\frac{[z^n] (\tilde{\scC}_b^\circledast)^\omega(z)}{[z^n]\tilde{\scA}^\omega(z)} \to \widetilde{((\scB')^\circledast)^\iota \circledcirc {\scA}^\omega}(\rho_\scA) .
\end{align}
This also holds in the case $\bar{Z}_{((\scB')^\circledast)^\iota} \ne 0$, since then $\widetilde{((\scB')^\circledast)^\iota \circledcirc {\scA}^\omega}(\rho_\scA)=0$. 
As we are in the setting~\eqref{eq:comp}, we may also argue entirely analogously as in the proof of \cite[Thm. 3.1]{2016arXiv161001401S} to obtain the following result.

\begin{lemma}
	\label{le:app2}
	If $\bar{Z}_{((\scB')^\circledast)^\iota} \ne 0$, then
	$
		( \max(|\mA_1|, \ldots, |\mA_F|) \mid |\mC|= n) = n + O_p(1).
	$
\end{lemma}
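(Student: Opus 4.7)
The plan is to mimic the ``one big jump'' argument underlying the proof of \cite[Thm.~3.1]{2016arXiv161001401S} and transfer it to the random sum decomposition $|\mC| \eqdist H + \sum_{i=1}^{F} |\mA_i|$ from \eqref{eq:comp}. Two ingredients drive the argument. First, by \eqref{eq:asymptotics} together with the Boltzmann form of $\mA$, the step distribution satisfies $\Pr{|\mA|=n} \sim c\, n^{-3/2}$ on the lattice $1+d\ndN$, so $|\mA|$ is heavy-tailed and subexponential. Second, Lemma~\ref{le:subc} together with \eqref{eq:joint} guarantees that $(F,H)$ has finite exponential moments, so sums over $f$ and $h$ may be handled uniformly; the family $(|\mA_i|)_{i\ge 1}$ is iid and independent of $(F,H)$.

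Setting $R_n := n - \max_i |\mA_i|$, it suffices to show $\Pr{R_n > k \mid |\mC|=n} \to 0$ as $k \to \infty$, uniformly in large $n$. Condition on $F=f$ and $H=h$. For $n > h + 2k$, the events $\{|\mA_i| \ge n-h-k\}$ for distinct $i$ are pairwise disjoint on $\{S_f = n-h\}$, since two such summands would together exceed the prescribed total $n-h$. By symmetry and the lower bound $|\mA_i|\ge 1$ one obtains
\begin{align*}
\Pr{S_f = n-h,\ R_n \le k\,\big|\,F=f,\,H=h} = f \sum_{t=f-1}^{k-h} \Pr{S_{f-1}=t}\,\Pr{|\mA|=n-h-t}.
\end{align*}
The pairs $(f,h)$ violating $n > h+2k$ contribute at most $\Pr{H \ge n-2k}$, which decays exponentially in $n$ by Lemma~\ref{le:subc} and is hence negligible relative to $\Pr{|\mC|=n} = \Theta(n^{-3/2})$. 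Using $\Pr{|\mA|=n-h-t}/\Pr{|\mA|=n} \to 1$ for fixed $h,t$ on the appropriate lattice and \eqref{eq:pc}, dominated convergence yields
\begin{align*}
\lim_{n\to\infty}\Pr{R_n \le k \mid |\mC| = n} = \frac{\Ex{F\,\one\{H + S_{F-1} \le k\}}}{\Ex{F}},
\end{align*}
which increases to $1$ as $k\to\infty$ since $H + S_{F-1}$ is almost surely finite.

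The main technical obstacle is making the dominated convergence step uniform in $n$, so that a single $k$ works for all sufficiently large $n$ rather than merely pointwise. Following the strategy in the proof of \cite[Thm.~3.1]{2016arXiv161001401S}, I would truncate $(F,H) \le A$ for a large constant $A$, absorb the tail contribution using the exponential moments from Lemma~\ref{le:subc}, and on the truncated range exploit that the local ratio $\Pr{|\mA|=n-h-t}/\Pr{|\mA|=n} \to 1$ is uniform in $h,t,f \le A$. This produces an estimate of the form $\Pr{R_n > k \mid |\mC|=n} \le \epsilon_A + \delta_k + o_n(1)$ with $\epsilon_A, \delta_k \to 0$, proving $R_n = O_p(1)$. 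No ingredients beyond those already introduced in the paper are needed.
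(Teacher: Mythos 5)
Your argument is correct and is essentially the paper's own proof: the paper simply notes that the decomposition \eqref{eq:comp}, with $(F,H)$ having finite exponential moments (Lemma~\ref{le:subc}) and $|\mA|$ having the subexponential local tail \eqref{eq:asymptotics}, puts us in the setting of \cite[Thm.~3.1]{2016arXiv161001401S} and invokes the same one-big-jump argument you spell out. The only remark is that the uniformity issue you flag as the main obstacle is not really one: for fixed $k$ the constraint $H+S_{F-1}\le k$ forces $F\le k+1$ and $H\le k$, so your sum over $(f,h,t)$ is finite and no dominated-convergence or truncation device is needed, while pointwise convergence to $L_k=\Ex{F\one\{H+S_{F-1}\le k\}}/\Ex{F}$ together with $L_k\uparrow 1$ already gives tightness of $n-\max_i|\mA_i|$ (the finitely many small $n$ being trivial since $R_n\le n$).
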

That is, if we draw a graph $\mC_n^b$ with probability proportional to its weight among all unlabelled $n$-vertex graphs from the class $(\tilde{\scC}_b^\circledast)^\omega$, then $\mC_n^b \eqdist (\mC \mid |\mC|=n)$ consists of large rooted graph $\mA_n^b$ (the component $\mA_i$ with maximal size) with a small graph of stochastically bounded size attached to its root. Furthermore, for any $k \ge 1$ it holds that the conditioned rooted component $(\mA_n^b \mid |\mA_n^b| = k)$ gets sampled with probability proportional to its $\omega$-weight among all unlabelled rooted graphs with size $k$.

\subsection{Enumerative properties and a justification of Corollary~\ref{co:components}}

It follows from Decompositions~\eqref{eq:begin} and~\eqref{eq:split}, and Equation~\eqref{eq:smallv}, Lemma~\ref{le:analytic} and Equation~\eqref{eq:T3} that
\begin{align}
\label{eq:cyc}
 [z^n] \tilde{\scC}^\omega(z) &= n^{-1} [z^n]\left(\tilde{\scA}^\omega(z) + (\tilde{\scC}^\circledast_v)^\omega(z) + (\tilde{\scC}^\circledast_b)^\omega(z)\right) \nonumber \\
 	&\sim \left(1 + \widetilde{\scP \circledcirc ((\scB')^\iota \circ \scA^\omega)}(\rho_\scA) +  \widetilde{((\scB')^\circledast)^\iota \circledcirc {\scA}^\omega}(\rho_\scA) \right) n^{-1} [z^n]\tilde{\scA}^\omega(z) \nonumber \\
 	&\sim c_{\scA} \left(1 + \widetilde{\scP \circledcirc ((\scB')^\iota \circ \scA^\omega)}(\rho_\scA) +  \widetilde{((\scB')^\circledast)^\iota \circledcirc {\scA}^\omega}(\rho_\scA) \right) n^{-5/2} \rho_\scA^{-n}
\end{align}
as $n \equiv 1 \mod d$ becomes large.
The result \cite[Thm. 28]{MR2240769} by  Bell, Burris and Yeats yields an explicit expression for the constant $c_\scA$ in Equation~\eqref{eq:asymptotics}, namely
\begin{align}
c_\scA = d \sqrt{\frac{\rho_\scA E_z(\rho_\scA, \tilde{\scA}^\omega(\rho_\scA))}{2\pi E_{uu}(\rho_\scA, \tilde{\scA}^\omega(\rho_\scA))}}
\end{align}
with $E_z$ and $E_{uu}$ denoting partial derivatives of the bivariate power series
\begin{align}
\label{eq:EE}
E(z, u) := z \exp\left( Z_{ (\cB')^\iota}(u, \tilde{\scA}^{\omega^2}(z^2),\tilde{\scA}^{\omega^3}(z^3), \ldots)  + \sum_{i \ge 2} \frac{1}{i}Z_{ (\cB')^{\iota^i}}(\tilde{\scA}^{\omega^i}(z^i), \tilde{\scA}^{\omega^{2i}}(z^{2i}),\ldots)\right).
\end{align}

For $d=1$, it  follows for example by~\cite[Lem. 3.2]{2016arXiv161001401S} that
\begin{align}
	[z^n] \tilde{\scG}^\omega(z) \sim \scG^\omega(\rho_\scA) [z^n] \tilde{\scC}^\omega(z)
\end{align}
with $\scG^\omega(\rho_\scA) = \exp(\sum_{i \ge 1} \tilde{\scC}^\omega(\rho_\scA^i) / i)$. Hence we recover the asymptotic expansion obtained in \cite[Thm. 15]{MR2873207}. By~\cite[Thm. 3.1]{2016arXiv161001401S} it follows that the largest connected component of the random graph $\mG_n^\omega$ has size $n + O_p(1)$, and that the remaining small fragments $\text{frag}(\mG_n^\omega)$ satisfy the limit
\begin{align}
	\text{frag}(\mG_n^\omega) \convdis \mG_0,
\end{align}
with $\mG_0$ defined in Corollary~\ref{co:components}. 

In the general case $d \ge 1$, the modulo of $n$ imposes restrictions on the number of components, as connected components whose number of vertices does not lie in $1 + d\ndN_0$ have weight zero. Thus for $n \equiv a \mod d$, $0 \le a < d$ an $n$-sized unlabelled graph from $\scG^\omega$ with non-zero weight is a multiset of connected unlabelled graphs with the total number of elements belonging to $a + d \ndN_0$. (The converse is ensured to hold when all connected components are sufficiently large, since there are only finitely many unlabelled connected graphs with size in $1 + d \ndN_0$ and weight zero.) We let $\Set_a$  denote the species with a single unlabelled object of size $k$ for each $k \in a + d\ndN_0$. It is easy to generalize~\cite[Thm. 3.1, Lem. 3.2]{2016arXiv161001401S} to obtain
\begin{align}
[z^n] \tilde{\scG}^\omega(z) \sim  Z_{\Set_a'}(\tilde{\scA}^\omega(\rho_\scA), \tilde{\scA}^{\omega^2}(\rho_\scA^2), \ldots) [z^{n+1-a}] \tilde{\scC}^\omega(z) \quad \text{and} \quad \text{frag}(\mG_n^\omega) \convdis \mG_a
\end{align}
as $n$ tends to infinity along the lattice $a + d \ndN_0$, with $\mG_a$ the random graph defined in Corollary~\ref{co:components}. (Compare with~\cite[Thm. 3.4]{Mreplaceme}, where such a generalization was carried out in the labelled setting.) This finalizes the proof of Corollary~\ref{co:components}.

\subsection{A proof of Theorem~\ref{te:approx}}
Suppose that $\bar{Z}_{((\scB')^\circledast)^\iota} \ne0$. In order to prove Theorem~\ref{te:approx} it suffices by Decompositions~\eqref{eq:begin} and~\eqref{eq:split} to show such approximation statements for random unlabelled $n$-vertex graphs sampled with probability proportional to their weight from the classes $\scA^\omega$, $(\scC^\circledast_v)^\omega$  and $ (\scC^\circledast_b)^\omega$. For the class $\scA^\omega$ this is trivial, and for the other two classes this is precisely what we did in Lemma~\ref{le:app1} and Lemma~\ref{le:app2}. Hence Theorem~\ref{te:approx} holds in this case, and we even obtain that $\mC_n^\omega \simeq \mD_n + \mA_{n-d_n}^\omega$. That is, the upper bound for the total variational distance in   Inequality~\eqref{eq:tv} is equal to zero.

In the case $\bar{Z}_{((\scB')^\circledast)^\iota} = 0$ we may argue again that analogous approximations as in Theorem~\ref{te:approx} hold for the classes $\scA^\omega$ and $(\scC^\circledast_v)^\omega$. However, such a statement does not appear to hold any longer for the class~$ (\scC^\circledast_b)^\omega$. This is not a problem, as Lemma~\ref{le:analytic} and Equation~\eqref{eq:cyc} guarantee that a random $n$-vertex cycle-pointed connected graph sampled with probability proportional to its $\omega$-weight belongs only with exponentially small probability to the class~$ (\scC^\circledast_b)^\omega$. Hence Theorem~\ref{te:approx} follows, and Inequality~\eqref{eq:tv} holds with the upper bound of the total variational distance being given by the quotient $\frac{[z^n](\tilde{\scC}^\circledast_b)^\omega(z)}{n [z^n]\tilde{\scC}^\omega(z)} \le C \exp(-cn)$ uniformly in all $n$ for fixed constants $C,c>0$ that do not depend on $n$.

\subsection{Proof strategy for Theorem~\ref{te:scalinglimit}}

The proof for Theorem~\ref{te:scalinglimit} starts with the following main lemma, which extends the Gromov--Hausdorff limit \cite[Thm. 6.14]{2015arXiv150402006S} to convergence in the rooted Gromov--Hausdorff--Prokhorov sense.

\begin{lemma}
	\label{le:lemma}
	Suppose that the tree-like requirement~\eqref{eq:H} is satisfied. Then there is a constant $c_\omega>0$ such that the random rooted graph $\mA_n^\omega$ equipped with the uniform measure $\mu_n^\cA$ on its set of vertices satisfies the weak limit
	\[
		\left(\mA_n^\omega, \frac{c_\omega}{\sqrt{n}} d_{\mA_n^\omega}, \mu_n^\scA \right) \convdis (\CRT, d_{\CRT}, \mu)
	\]
	in the rooted Gromov--Hausdorff--Prokhorov sense. 
\end{lemma}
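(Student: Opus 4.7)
The strategy is to upgrade the rooted Gromov--Hausdorff limit of \cite[Thm. 6.14]{2015arXiv150402006S} to a rooted Gromov--Hausdorff--Prokhorov limit by analyzing how the uniform measure $\mu_n^\scA$ on $V(\mA_n^\omega)$ transports under the coupling between $\mA_n^\omega$ and an associated random enriched plane tree $\mT_n$ used there. Recall that iterating the block-decomposition $\scA^\omega \simeq \scX \cdot ((\scB')^\iota \circ \scA^\omega)$ identifies $\mA_n^\omega$ with a weighted bipartite tree whose "white" vertices are in bijection with $V(\mA_n^\omega)$ and whose "black" vertices carry a derived $2$-connected block as decoration. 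Under the subcriticality condition~\eqref{eq:H}, this tree is distributed as a subcritical sesqui-type Galton--Watson tree conditioned on having $n$ white vertices, with an offspring and decoration law of finite exponential moments.

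First, I would unpack the ingredients of \cite[Thm. 6.14]{2015arXiv150402006S}. The rescaled skeleton $(\mT_n, c'_\omega n^{-1/2} d_{\mT_n})$ already converges to $(\CRT, d_\CRT)$ in the rooted Gromov--Hausdorff sense, and the canonical injection $V(\mA_n^\omega) \hookrightarrow V(\mT_n)$ is a correspondence whose metric distortion, rescaled by $n^{-1/2}$, tends to zero in probability. Repeating that analysis with joint tracking of the white/black vertex partition yields the same convergence in the rooted Gromov--Hausdorff--Prokhorov sense, provided one upgrades it to include the uniform measure on white vertices of $\mT_n$; this is a standard consequence of contour/Lukasiewicz convergence for conditioned multi-type Galton--Watson trees under exponential moment assumptions, as developed in the sesqui-type framework of \cite{2015arXiv150402006S}.

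Second, I would push $\mu_n^\scA$ through the correspondence. Since $V(\mA_n^\omega)$ maps bijectively to the set of white vertices of $\mT_n$, the image of $\mu_n^\scA$ is precisely the uniform distribution on white vertices. Combining this identification with the rooted GHP convergence of $\mT_n$ equipped with that measure, and with the approximate isometry between $(\mA_n^\omega, c'_\omega n^{-1/2} d_{\mA_n^\omega})$ and the rescaled tree, yields the claimed limit for $\mA_n^\omega$, with the scaling constant $c_\omega$ obtained as $c'_\omega$ multiplied by the asymptotic bi-Lipschitz ratio.

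The main obstacle will be verifying that the uniform measure on white vertices of the conditioned sesqui-type tree really converges to the CRT mass measure: a priori, Aldous' invariance principle delivers only that the uniform measure on \emph{all} vertices of $\mT_n$ converges to $\mu$, so one must show that the white vertices are equidistributed at macroscopic scale. I would handle this via a law of large numbers along the Lukasiewicz path: the number of white vertices in any sub-path of length $k$ is a sum of i.i.d.\ increments with finite exponential moments, so by a uniform Cram\'er bound the count deviates from its mean by at most $O(\sqrt{k \log k})$ uniformly over all sub-paths, which after rescaling by $n^{-1/2}$ is negligible. Coupled with tightness of the longest decoration block (an exponential tail estimate derivable from~\eqref{eq:H} in the spirit of Lemma~\ref{le:vc}), this forces the Prokhorov distance between the uniform measure on white vertices and the uniform measure on all vertices of $\mT_n$, carried over to the rescaled metric, to tend to zero in probability. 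A final application of the standard approximation lemma for rooted GHP distances (\cite[Thm. 2.3]{MR3035742}) then closes the argument.
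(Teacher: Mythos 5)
There is a genuine gap at the very first step: your combinatorial foundation is the \emph{labelled} picture, and it fails for unlabelled graphs. The species identity $\scA^\omega \simeq \scX \cdot ((\scB')^\iota \circ \scA^\omega)$ does hold, but after passing to isomorphism classes the Boltzmann/weight-proportional measure on unlabelled rooted graphs is \emph{not} the law of a conditioned (sesqui-type) Galton--Watson block-tree with independent block decorations whose white vertices biject with $V(\mA_n^\omega)$: distinct unlabelled graphs are represented by different numbers of tree-with-decoration objects, depending on their automorphism groups, which is precisely why the cycle-index and symmetry machinery is needed throughout. Accordingly, no such coupling with an injection $V(\mA_n^\omega) \hookrightarrow V(\mT_n)$ underlies \cite[Thm.~6.14]{2015arXiv150402006S}. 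The coupling actually available there (and used in the paper) generates $\mA$ by recursively attaching independent copies of a random graph $\mG$ at its ``fixed points''; only the fixed points form a critical Galton--Watson tree $\cT_n^f$, and the vertex set of $\mA_n^\omega$ maps \emph{many-to-one} onto $\cT_n^f$, each tree vertex $v$ carrying an extra bunch of $|F(G(v))|$ non-fixed-point vertices. Hence your claim ``the image of $\mu_n^\scA$ is precisely the uniform distribution on white vertices'' has no counterpart: the pushforward of $\mu_n^\scA$ weights a tree vertex $v$ proportionally to $1+|F(G(v))|$, and the real task is to show that this projected measure $\mu_n^\circ$ is Prokhorov-close (in the rescaled tree metric) to the uniform measure $\mu_n^f$ on $\cT_n^f$. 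Unless you can prove the distributional identification you assert (which would itself require the Pólya-theoretic symmetry analysis and is not a routine citation), the first half of your argument does not get off the ground.

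The encouraging part is that, once the coupling is corrected, your second half is essentially the right argument and matches the paper's: a deviation bound for sums of i.i.d.\ copies of $\zeta$ (finite exponential moments follow from \eqref{eq:H}) along the depth-first order shows that the DFS index of the projected uniform vertex is asymptotically uniform on $\{1,\dots,|\cT_n^f|\}$; tightness of the rescaled height process of the conditioned Galton--Watson tree (together with the central limit theorem for $|\cT_n^f|$) converts closeness of indices into closeness of tree distances, yielding $d_{\mathrm{P}}(\mu_n^f,\mu_n^\circ)\convp 0$; and combining this with Aldous' invariance principle for $\cT_n^f$ and the distortion bound from \cite[Thm.~6.9]{2015arXiv150402006S}, via the rooted GHP approximation in the spirit of \cite[Thm.~2.3]{MR3035742}, completes the proof. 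So the missing piece is not the equidistribution mechanism you identified as the ``main obstacle'' (that part is sound in substance), but the coupling between $\mA_n^\omega$ and a tree skeleton on which that mechanism can legitimately be run.
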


The justification of this fact requires us to recall additional notation. Hence we postpone a detailed to proof of Lemma~\ref{le:lemma} to Section~\ref{sec:rghp} below. As the graph $\mD_n$ from Theorem~\ref{te:approx} has size $d_n = O_p(1)$ it follows from Lemma~\ref{le:lemma} and the diameter tail-bound 
\begin{align}
	\label{eq:dum}
	\Pr{\Di(\mA_n^\omega) \ge x} \le C \exp(-cx^2/n)
\end{align}
from~\cite[Thm. 6.14]{2015arXiv150402006S} that
\[
\left(\mA_{n-d_n}^\omega, \frac{c_\omega}{\sqrt{n}} d_{\mA_{n-d_n}^\omega}, \mu_{n-d_n}^\scA \right) \convdis (\CRT, d_{\CRT}, \mu).
\]
Let $\mu_n^{\scA+\scD}$ denote the uniform measure on the vertex set of the graph  $\mA_n^\omega + \mD_n$. Using again  $d_n = O_p(1)$ we may observe
\[
	d_{\text{GHP}} \left(	\left(\mA_{n-d_n}^\omega + \mD_n, \frac{c_\omega}{\sqrt{n}} d_{\mA_{n-d_n}^\omega + \mD_n}, \mu_{n}^{\scA + \scD} \right), \left(\mA_{n-d_n}^\omega, \frac{c_\omega}{\sqrt{n}} d_{\mA_{n-d_n}^\omega}, \mu_{n-d_n}^\scA \right) \right)  \convp 0.
\]
This verifies the  limit~\eqref{eq:limt}.

In order to complete the proof of Theorem~\ref{te:scalinglimit}, it remains
to establish a  tail-bound for the diameter.  It suffices to verify a bound as in~\eqref{eq:tailb} uniformly for $\sqrt{n} \le x \le n$. Moreover, we may treat the three individual parts of the decomposition $(\scC^\circ)^\omega \simeq \scA^\omega + (\scC^\circledast_v)^\omega + (\scC^\circledast_b)^\omega$ in \eqref{eq:begin} and \eqref{eq:split} individually. Inequality~\eqref{eq:dum} takes care of the first summand $\scA^\omega$. As for the case of a vertex cycle-center, we may sample an $n$-vertex unlabelled graph from $(\scC^\circledast_v)^\omega$ with probability proportional to its weight by conditioning the following procedure on producing a graph with size $n$:
\begin{enumerate}
	\item Draw a random unlabelled $\scA^\omega$-object $\mA$ with probability $\Pr{\mA = A} = \omega(A) \rho_\scA^{|A|} / \tilde{\scA}^\omega(\rho_\scA)$.
	\item Draw a random unlabelled ${\scP \circledcirc ((\scB')^\iota \circ \scA^\omega)}$-object $\mP$ with $\Pr{\mP =P}$  proportional to $\omega(P) \rho_\scA^{|P|}$.%$\widetilde{\scP \circledcirc ((\scB')^\iota \circ \scA^\omega)}(\rho_\scA)$.
	\item Glue $\mA$ and $\mP$ together at their root vertices to form the graph $\mA + \mP$.
\end{enumerate}
Note that the total size of the graph $\mA + \mP$ is $|\mA| + |\mP| -1$ as we identify the two roots. By \eqref{eq:asymptotics} and \eqref{eq:smallv} we know that $\Pr{|\mA| + |\mP| -1 = n} = \rho_\scA^n [z^n](\tilde{\scC}^\circledast_v)^\omega(z) / (\tilde{\scC}^\circledast_v)^\omega(\rho_\scA) = O(n^{3/2})$. If $\Di(\mA + \mP) \ge x$ then it holds that $\Di(\mA) \ge x/2$ or $|\mP| \ge x/2$. It follows that
\begin{align}
\label{eq:t1}
\Pr{\Di(\mA + \mP) \ge x \mid |\mA| + |\mP| = n-1} & \le \Pr{\Di(\mA) \ge x/2 \mid |\mA| + |\mP| = n-1} + O(n^{3/2}) \Pr{|\mP| \ge x/2}. 
\end{align}
By Lemma~\ref{le:vc} there are constants $C',c'>0$ such that $\Pr{|\mP| \ge y} \le C' \exp(-c'y)$ for all $y$.  It follows that uniformly for all $\sqrt{n} \le x \le n$ we have
\begin{align}
\label{eq:t2}
O(n^{3/2}) \Pr{|\mP| \ge x/2}  \le O(n^{3/2}) \exp(-c' x/2) = O(1) \exp(- c'(1 + o(1))x/2) \le C'' \exp(-c'' x^2 /n)
\end{align}
for some fixed constants $C'', c'' >0$. Using~\eqref{eq:dum} and $x \le n$ we obtain also that
\begin{align}
\label{eq:t3}
	\Pr{\Di(\mA) \ge x/2 \mid |\mA| + |\mP| = n-1} &\le \sum_{k=x/2}^n \Pr{\Di(\mA) \ge x/2 \mid |\mA|=k} \Pr{|\mA|=k \mid  |\mA| + |\mP| = n-1} \nonumber \\ 
	&\le C \sum_{k=x/2}^n \exp(-cx^2/(4k)) \Pr{|\mA|=k \mid  |\mA| + |\mP| = n-1} \nonumber\\
	&\le C \exp(-cx^2/(4n)).
\end{align}
Combining \eqref{eq:t1}, \eqref{eq:t2} and \eqref{eq:t3} it follows that
\[
	\Pr{\Di(\mA + \mP) \ge x \mid |\mA| + |\mP| = n-1} \le C''' \exp(-c''' x^2 /n)
\]
uniformly for $\sqrt{n} \le x \le n$ for some constants $C''', c''' > 0$. 

It remains to verify such a bound for the case of a block-cycle center. By Lemma~\ref{le:analytic} it follows that in case $\bar{Z}_{((\scB')^\circledast)^\iota} =0$ the probability for a random unlabelled $n$-vertex graph from the class $(\scC^\circ)^\omega$  have a block cycle-center is exponentially small, that is, it is bounded by $C_1 \exp(-c_1 n)$ from some constants $C_1, c_1>0$ that do not depend on $n$. As $x \le n$ we have $C_1 \exp(-c_1 n) \le C_1 \exp(-c_1 x^2 / n)$, and hence we are done in this case.

In case $\bar{Z}_{((\scB')^\circledast)^\iota} \ne 0$ the strategy is similar to the case of a vertex cycle center, but the details are more technical. Recall the random graph $\mC$ from Equation~\eqref{eq:cc} and its sampling procedure in subsequent paragraphs that splits it into a part with $H$ vertices and $F$ rooted components $\mA_1, \ldots, \mA_F$  as in \eqref{eq:comp}. If $\Di(\mC) \ge x$ then it holds that $H \ge x/2$ or $\max(\Di(\mA_1), \ldots, \Di(\mA_F)) \ge x/4$. As $\Pr{|\mC|=n} = O(n^{3/2})$ by \eqref{eq:pc} and \eqref{eq:asymptotics}, it follows that
\begin{align}
	\label{eq:s1}
	\Pr{\Di(\mC) \mid |\mC| =n} \le \Prb{\max(\Di(\mA_1), \ldots, \Di(\mA_F)) \ge x/4 \mid |\mC|= n} + O(n^{3/2}) \Pr{ H \ge x/2}.
\end{align}
Lemma~\ref{le:subc} ensures that the vector $(F,H)$ has finite exponential moments. In particular there are constants $C_1', c_1' >0$ such that $\Pr{H \ge y} \le C_1' \exp(-c_1' y)$ uniformly for all $y$. Hence
\begin{align}
	\label{eq:s2}
O(n^{3/2}) \Pr{ H \ge x/2} \le O(n^{3/2}) \exp(-c_1' x/2) = O(1) \exp(- c_1'(1 + o(1))x/2) \le C_1'' \exp(-c_1'' x^2 /n)
\end{align}
for some constants $C_1'', c_1''>0$. As for the other summand in \eqref{eq:s1}, for any $f',k_1, \ldots, k_{f'} \ge 0$ let $\cE$ denote the event that $F= f'$ and $|\mA_i| = k_i$ for all $1 \le i \le f'$. We may argue using \eqref{eq:dum} that
\begin{align}
\label{eq:s3}
\Prb{\max_{1 \le i \le F}\Di(\mA_i) \ge x/4 \mid |\mC|= n}  &\le \sum_{{ 1 \le f', k_1, \ldots, k_{f'} \le n}}\Prb{\max_{1 \le i \le F}\Di(\mA_i) \ge x/4 \mid \cE} \Pr{\cE \mid |\mC| = n}  \nonumber \\
& \le \sum_{{ 1 \le f', k_1, \ldots, k_{f'} \le n}} \sum_{1 \le i \le f'} C \exp(- c x^2/(16 k_i)) \Pr{\cE \mid |\mC| = n}\nonumber \\
& \le  C \exp(- c x^2/(16 n)) \sum_{{ 1 \le f', k_1, \ldots, k_{f'} \le n}} f' \Pr{\cE \mid |\mC| = n}\nonumber \\
& \le  C \exp(- c x^2/(16 n)) \Ex{F \mid |\mC| = n}.
\end{align}
It follows from the expression~\eqref{eq:joint} of the joint probability generating function of $F$ and $H$ that
\[
\Ex{F \mid |\mC| = n} = \frac{[z^n] \frac{\partial f}{\partial x}(\tilde{\scA}^\omega(z \rho_\scA), z \rho_\scA )}{ [z^n] f(\tilde{\scA}^\omega(z \rho_\scA), z \rho_\scA )}.
\]
We know by~\eqref{eq:T3} that $[z^n] f(\tilde{\scA}^\omega(z \rho_\scA), z \rho_\scA )$ is asymptotically equivalent to $[z^n] \tilde{\scA}^\omega(z)$ up to a constant factor. By the same arguments (just with $\frac{\partial f}{\partial x}$ instead of $f$) the same holds for $[z^n] \frac{\partial f}{\partial x}(\tilde{\scA}^\omega(z \rho_\scA), z \rho_\scA )$. (This could also be verified using the usual singularity analysis methods from \cite[Thm. VI.5]{MR2483235}.) Consequently, the conditional expectation $\Ex{F \mid |\mC| =n}$ remains bounded as $n$ tends to infinity. It follows from this fact and Inequalities~\eqref{eq:s1}, \eqref{eq:s2}, and \eqref{eq:s3} that $\Pr{\Di(\mC) \mid |\mC| =n} \le C_1'' \exp(-c_1'' x^2/n)$ holds uniformly in $\sqrt{n} \le x \le n$ for some constants $C_1'', c_1''>0$. This completes the proof of Theorem~\ref{te:scalinglimit}.

\subsection{A proof of Theorem~\ref{te:bslimit}}

It was shown in \cite[Thm. 6.13]{2015arXiv150402006S} that there is a random rooted graph $\hat{\mC}$ such that for any sequence $k_n = o(\sqrt{n})$ the $k_n$ neighbourhood $U_{k_n}(\mA_n^\omega, u_n)$ of a uniformly selected vertex $u_n \in \mA_n^\omega$ satisfies
\[
	d_{\mathrm{TV}}( U_{k_n}(\mA_n^\omega, u_n), U_{k_n}(\hat{\mC})) \to 0.
\]
This was obtained from a more general result~\cite[Thm. 6.8]{2015arXiv150402006S}, that also yields that the root of $\mA_n^\omega$ is with high probability \emph{not} contained in $U_{k_n}(\mA_n^\omega, u_n)$. By Theorem~\ref{te:approx} we know that a uniformly selected vertex $x_n$ of $\mD_n + \mA^\omega_{n - d_n}$ lies with high probability in $\mA^\omega_{n - d_n}$, since $\mD_n$ accounts for stochastically bounded subset of the $n$ vertices. Conditioned on this event, the vertex $x_n$ is uniformly distributed among the vertices of $\mA^\omega_{n - d_n}$. Since $k_n / \sqrt{n - d_n} = o_p(1)$, it follows that with high probability the $k_n$ neighbourhood of $x_n$ does not contain the root of $\mA^\omega_{n - d_n}$. That is, $U_{k_n}(\mD_n + \mA^\omega_{n - d_n}, x_n) = U_{k_n}( \mA^\omega_{n - d_n}, x_n)$ holds with probability tending to $1$ as $n$ becomes large. It follows from \eqref{eq:tv} that the uniformly selected vertex $v_n \in \mC_n^\omega$ satisfies
\[
d_{\mathrm{TV}}( U_{k_n}(\mC_n^\omega, u_n), U_{k_n}(\hat{\mC})) \to 0.
\]
This proves Theorem~\ref{te:bslimit}.

\section{Convergence in the rooted GHP sense}
\label{sec:rghp}

In the proof of Theorem~\ref{te:scalinglimit} we postponed a justification of Lemma~\ref{le:lemma} to this section, as it requires us to recall some notation and results. Throughout we assume that the tree-like requirement~\eqref{eq:H} is satisfied.

It was shown in~\cite[Lem. 6.1, 6.2]{2015arXiv150402006S} in a more general context that the random graph $\mA$ with distribution $\Pr{\mA = A} = \omega(A) \rho_\scA^{|A|} / \tilde{\scA}^\omega(\rho_\scA)$ may be sampled according to a certain process. We present a description of this process using a slightly simplified notation, so we have to recall only the details that we are actually going to use. It involves a certain random connected graph $\mG$ that has one marked root that by convention, does not contribute to its size, and the remaining vertices are partitioned into a set $f(\mG)$ of "fixed-points" and  "non-fixed-points" $F(\mG)$. (See \cite{2015arXiv150402006S} for context on why it makes sense to use this terminology.) The process goes by starting with a root-vertex $o$ and identifying it with the root of an independent copy $G(o)$ of $\mG$. Then for each fixed-point $v$ we take a fresh independent copy $G(v)$ of $\mG$, identify $v$ with the root of $G(v)$, and mark $v$ as visited. The process continuous in this way until it dies out, that is, when there are no unvisited fixed-points left. This happens almost surely and the resulting graph is distributed like $\mA$. In the following, we may assume that $\mA$ is actually sampled in this way. By convention, we will also refer to the root of $\mA$ as a fixed-point.

The graph $\mG$ is defined in \cite[Sec. 6]{{2015arXiv150402006S}} in such a way such that the bivariate generating function of the sizes $\xi := |f(\mG)|$ and $\zeta := |F(\mG)|$ is given by
\begin{align}
	\label{eq:def}
	\Ex{z^\xi w^\zeta} = \exp\left( z \tilde{\scA}^\omega(\rho_\scA) + \sum_{i=2}^\infty \tilde{\scA}^{\omega^i}(\rho_\scA^i w^i)\right) \rho_\scA / \tilde{\scA}^\omega(\rho_\scA).
\end{align}
By \cite[Lem. 6.3]{2015arXiv150402006S} the vector $(\xi, \zeta)$ has finite exponential moments and it holds that $\Ex{\xi}=1$.  The sampling procedure for $\mA$ resembles a branching process. Indeed, we may form the tree $\cT^f$ consisting of the fixed-points of $\mA$ such the offspring of a fixed-point $v$ is given by the fixed-point of its associated graph $G(v)$. The tree $\cT^f$ is distributed like a critical Galton--Watson tree with reproduction law $\xi$. As the random graph $\mA_n^\omega$ is distributed like the conditioned graph $(\mA \mid |\mA|=n)$, we consider the version $(\cT_n^f, (G_n(v))_{v \in \cT_n^f})$ of $(\cT^f, (G(v))_{v \in \cT^f})$ conditioned on the event $|\mA|=n$. The tree $\cT_n^f$ has a random size, that by \cite[Lem. 6.3]{2015arXiv150402006S} satisfies a normal central limit theorem 
\begin{align}
	\label{eq:normal}
	\frac{|\cT_n^f| - n/(1 + \Ex{\zeta})}{\sqrt{n}} \convdis \cN(0, \sigma^2)
\end{align}
for some $\sigma>0$. For any $k \ge 1$ it holds that $(\cT_n^f \mid |\cT_n^f| = k)$ is distributed like the conditioned Galton--Watson tree $(\cT^f \mid |\cT^f|=k)$. Hence it follows from Aldous' invariance principle~\cite{MR1207226} for critical Galton--Watson trees whose offspring law has finite variance that the tree $\cT_n^f$ equipped with the law $\mu_n^f$ of a uniformly at random selected vertex converges in the rooted Gromov--Hausdorff--Prokhorov sense towards the CRT $(\CRT, d_{\CRT}, \mu)$ after rescaling the metric on $\cT_n^f$ by $c_f / \sqrt{n}$ for some positive constant $c_f$. So in order to prove Lemma~\ref{le:lemma} it suffices to verify that there is a positive constant $c_\omega$ such that Gromov--Hausdorff--Prokhorov distance of $(\mA_n^\omega, c_\omega n^{-1/2}d_{\mA_n^\omega}, \mu_n^\scA)$ and $(\cT_n^f, c_f  n^{-1/2} d_{\cT_n^f}, \mu_n^f)$ converges in probability to zero.

For each vertex $v \in \mA$ define the vertex $v^\circ$ by setting if $v^\circ = v$ if $v$ is a fixed-point, and letting $v^\circ$ be the unique fixed-point such that $v \in f(G(v^\circ))$ if $v$ is not a fixed-point. 
In the proof of the Gromov--Hausdorff scaling limit~\cite[Thm. 6.9]{2015arXiv150402006S} it was shown (in a more general context) that there is a  constant $c_\omega>0$ for which the Gromov--Hausdorff distance of the spaces $(\mA_n^\omega, c_\omega n^{-1/2}d_{\mA_n^\omega})$ and $(\cT_n^f, c_f  n^{-1/2} d_{\cT_n^f})$ converges in probability to zero by verifying
\begin{align*}
	n^{-1/2} \sup_{x,y \in \mA_n^\omega} |c_\omega d_{\mA_n}(x,y) - c_f d_{\cT_n^f}(x^\circ, y^\circ)| \convp 0.
\end{align*}
So if $u_n \in \mA_n^\omega$ gets drawn uniformly at random, this implies that the law $\mu_n^\circ$ of $u_n^\circ$ satisfies
\[
	d_{\mathrm{GHP}}\left(  (\mA_n^\omega, c_\omega n^{-1/2}d_{\mA_n^\omega}, \mu_n^\scA), (\cT_n^f, c_f  n^{-1/2} d_{\cT_n^f}, \mu_n^\circ) \right) \convp 0.
\]
Thus, in order to prove Lemma~\ref{le:lemma} it suffices to show that the Prokhorov distance of the uniform law $\mu_n^f$ and the law $\mu_n^\circ$ (which are Borel probability measures on the same random metric space $(\cT_n^f, c_f n^{-1/2} d_{\cT_n^f})$) satisfies
\begin{align}
	\label{eq:toshow}
	d_{\mathrm{P}}(\mu_n^f, \mu_n^\circ) \convp 0.
\end{align}

To this end, let $v_1, \ldots, v_{|\cT^f|}$ denote the depth-first-search ordered list of vertices of the tree $\cT^f$. Let $u \in \cT^f$ be drawn uniformly at random and let $i^\circ$ denote the unique index such that $u^\circ = v_{i^\circ}$ or $u^\circ \in F(G(v_{i^\circ}))$. For any $0 \le x \le 1$ it holds that
\begin{align}
	\label{eq:dahell}
	\Pr{ i^\circ \le x |\cT^f| \mid |\mA| = n } = \Ex{ n^{-1}\sum_{1 \le i \le x |\cT^f|} (1 + |F(G(v_i))|) \mid |\mA| = n}.
\end{align}
By~\eqref{eq:asymptotics} we know that $\Pr{|\mA|= n} = O(n^{3/2})$ and \eqref{eq:normal} yields that  $|\cT_n^f| \in n/(1+\Ex{\zeta}) \pm n^{2/3}$ with high probability. Let $(\zeta_i)_{i \ge 1}$ be independent copies of $\zeta$.  Hence the event \[
\cE_n = \left\{n^{-1}\sum_{1 \le i \le x |\cT^f|} (1 + |F(G(v_i))|) \notin x \pm n^{-2/3} \right\}
\]
satisfies
\begin{align}
	\label{eq:bojack}
	\Pr{ \cE_n \mid |\mA|= n} &\le o(1) +  O(n^{3/2}) \sum_{K \in n/(1 + \Ex{\zeta}) \pm n^{2/3}} \Pr{ \cE_n, |\cT^f| = K, |\mA| = n} \nonumber \\
	&\le o(1) + O(n^{3/2}) \sum_{K \in n/(1 + \Ex{\zeta}) \pm n^{2/3}} \Prb{ \sum_{1 \le i \le xK} (1 + \zeta_i) \notin nx \pm n^{1/3} }
\end{align}
As $\zeta$ has finite exponential moments, it follows by a well-known deviation inequality for one-dimensional random walk found in most books on the subject that there are constants $\delta, c>0$ such that for all $k \ge 1$, $t \ge 0$ and $0 \le \lambda \le \delta$ it holds that
\[
\Pr{|\zeta_1 + \ldots + \zeta_k - k \Ex{\zeta}| \ge t} \le 2 \exp(c k \lambda^2 - \lambda t).
\]
%Setting $t=k^{3/4}$ and $\lambda = k^{-2/3}$ it follows that
%\begin{align}
%\Pr{|\zeta_1 + \ldots + \zeta_k - k \Ex{\zeta}| \ge k^{2/3}} \le C\exp(-k/12)
%\end{align}
%for some constant $C>0$ that does not depend on $k$.
This means that in Inequality~\eqref{eq:bojack} we sum up polynomially many bounds that are uniformly  exponentially small, yielding that $\Pr{ \cE_n \mid |\mA|= n}$ tends to zero. Hence by dominated convergence it follows from~\eqref{eq:dahell} that the probability $\Pr{ i^\circ \le x |\cT^f| \mid |\mA| = n }$ tends to $x$ as $n$ becomes large. In other words, if $i_n^\circ$ denotes $i^\circ$ conditioned on the event $|\mA|=n$, then
\begin{align}
	\label{eq:prok}
	\frac{i^\circ_n}{|\cT_n^f|} \convdis X
\end{align}
for some random variable $X$ that is uniformly distributed on the unit interval $[0,1]$.
By Skorokhod's representation theorem~\cite[Thm. 3.3]{MR0310933} we may without loss of generality assume that all considered random variables are defined on the same probability space and $\frac{i^\circ_n}{|\cT_n^f|} \to X$ holds almost surely. For each $n$ partition the unit interval into $|\cT_n^f|$ disjoint equally long subintervals $(I_i)_{1 \le i \le |\cT_n^f|}$ and let $i^*_n$ denote the unique index with $X \in I_{i_n^*}$. Clearly $i^*_n$ is uniformly distributed over the set $\{1, \ldots, |\cT_n^f|\}$, so $v_{i^*_n}$ is a uniformly sampled vertex of $\cT_n^f$. Since $\frac{i^\circ_n}{|\cT_n^f|} \to X$ holds almost surely it follows that 
\begin{align}
	\label{eq:as}
	\left |\frac{i^\circ_n}{|\cT_n^f|} - \frac{i^*_n}{|\cT_n^f|} \right | \to 0
\end{align}
almost surely.

For each index $i$ let $h_i$ denote the height of the vertex $v_i$.  The  rescaled  height-process associated to the tree $\cT_n^f$ is the c\`adl\`ag function $\left( \frac{1}{\sqrt{|\cT_n^f|}} h_{ \lfloor t |\cT_n^f| \rfloor}, 0 \le t \le 1\right)$. It follows from~\eqref{eq:normal} and~\cite[Thm. 3.1]{MR1964956} that the rescaled height-process converges weakly  with respect to the Skorokhod topology towards a constant multiple $\kappa \me$ of Brownian excursion $\me = (\me_t, 0 \le t \le 1)$ of duration one. As Brownian excursion is almost surely continuous, this implies that the weak limit already holds with respect to the supremum norm. Tightness then implies that for any $\epsilon, \eta>0$ there is a $0 < \delta < 1$ such that
\begin{align}
	\label{eq:tight}
	\Prb{ |\cT_n^f|^{-1/2} \sup_{|s-t| < \delta}| h_{\lfloor s|\cT_n^f| \rfloor} -  h_{\lfloor t|\cT_n^f| \rfloor} | \ge \epsilon} < \eta
\end{align}
for large enough $n$. 

Suppose that  there are indices $1 \le i<j \le |\cT_n^f|$ such that  $|i-j| < \delta |\cT_n^f|$ and  $d_{\cT_n^f}(v_i, v_j) \ge 4 \epsilon \sqrt{|\cT_n^f|}$. Let $k$ be the index such that $v_k$ is the youngest common ancestor of $v_i$ and $v_j$. It holds that
\[
	d_{\cT_n^f}(v_i, v_j) = (h_i - h_k) + (h_j - h_k).
\]
Set $x := d_{\cT_n^f}(v_i,v_j)$. If $h_j - h_k \le x/3$, then it follows that $h_i - h_k \ge 2x/3$, and hence $h_i - h_j = (h_i - h_k) + (h_k - h_j) \ge x/3$. If on the other hand $h_j - h_k > x/3$, then the unique offspring $v_{k'}$ of $v_k$ that lies on the path between $v_k$ and $v_j$ satisfies $i < k' \le j$ and $h _j - h_{k'} \ge x/3 -1$. It follows that there are indices $i^*$ and $j^*$ such that 
\[
|i^* - j^*| < \delta |\cT_n^f| \quad \text{and} \quad |h_{i^*} - h_{j^*}| \ge \frac{4}{3} \epsilon \sqrt{|\cT_n^f|} -1.
\]
By Inequality~\eqref{eq:tight} it follows that for large enough $n$
\begin{align}
\label{eq:tight2}
\Prb{ |\cT_n^f|^{-1/2} \sup_{|s-t| < \delta} d_{\cT_n^f}(v_{\lfloor s|\cT_n^f|}, v_{\lfloor t|\cT_n^f|})   \ge 4 \epsilon} < \eta.
\end{align}

Now, let us put it all together. It follows from~\eqref{eq:as} and \eqref{eq:tight2} that
\[
\Prb{  d_{\cT_n^f}(v_{i^\circ_n}, v_{i^*_n})   \ge 4 \epsilon   \sqrt{|\cT_n^f|}} < 2 \eta
\]
for large enough $n$. As $\epsilon$ and $\eta$ where arbitrary, this implies
\[
|\cT_n^f|^{-1/2}d_{\cT_n^f}(v_{i^\circ_n}, v_{i^*_n}) \convp 0.
\]
The vertex $v_{i_n^*}$ is uniformly distributed among the nodes of the tree $\cT_n^f$, so \eqref{eq:toshow} follows and the proof of Lemma~\ref{le:lemma} is complete. 

\section{The scaling constant of unlabelled outerplanar graphs}
\label{sec:const}

\subsection{A general description}
The scaling constant $c_\omega$ of Theorem~\ref{te:scalinglimit} is identical to the scaling constant for unlabelled rooted graphs. Hence, by the general result \cite[Lem. 6.11, Proof of Thm. 6.9]{2015arXiv150402006S}, it is given by
\begin{align}
	\label{eq:comega}
	c_\omega = \frac{\sqrt{(1 + \Ex{\zeta})\Va{\xi}}}{2 \Ex{\eta}}
\end{align}
with $\xi$ and $\zeta$ the random variables given in  \eqref{eq:def}, and $\eta$ the distance between the marked points in a random unlabelled $(\scB'^\bullet)^\iota$-object $\mB'^\bullet$ that follows a Boltzmann distribution with parameter $\tilde{\scA}^\omega(\rho_\scA)$. That is, $\mB'^\bullet$ is equal to a random unlabelled $(\scB'^\bullet)^\iota$-object $B$ with probability $\tilde{\scA}^\omega(\rho_\scA)^{|B|} \iota(B) / (\tilde{\scB}'^\bullet)^\iota(\tilde{\scA}^\omega(\rho_\scA))$.  Note that $B$ is a graph having an inner root (also called the $*$-vertex) that does not contribute to the total size $|B|$, and an outer root that does contribute to the total size and is required to not coincide with the inner root. It follows from the expression for the generating function of $(\xi, \zeta)$ in Equation~\eqref{eq:def} that
	\begin{align}
	\label{eq:almost}
	\Va{\xi}&=E_{uu} (\rho_\scA, \tilde{\scA}^\omega(\rho_\scA))\tilde{\scA}^\omega(\rho_\scA) \quad \text{and} \quad 
\Ex{\zeta}= E_z(\rho_\scA, \tilde{\scA}^\omega(\rho_\scA)) \rho_\scA / \tilde{\scA}^\omega(\rho_\scA) -1,
\end{align}
with the power series $E(z,u)$ defined in Equation \eqref{eq:EE} (and $E_z$ and $E_{uu}$ denoting partial derivatives). The main challenge is to compute $\Ex{\eta}$.

\subsection{Unlabelled outerplanar graphs}
We are now going to derive  numeric approximations of the constant $c_\omega$ in \eqref{eq:comega} for the $\omega$-weighting $\omega_\scO$ that corresponds to uniform unlabelled outerplanar graphs. It was shown in \cite[Thm. 2.16]{vigerske}, \cite[Cor. 3.6]{MR2350456} that
\begin{align}
\label{eq:fm}
Z_{(\scB')^{\iota_\scO}}(s_1, s_2, \ldots) =\frac{1}{8}\left(1+s_1-\sqrt{s_1^2-6s_1+1}\right)
+\frac{1}{8s_2^2}(s_1+s_2)\left(1-3s_2-\sqrt{s_2^2-6s_2+1}\right),
\end{align}
with $\iota_\scO$ denoting the special case of the $\iota$-weighting for outerplanar graphs. The formula was obtained in the cited sources by computing $Z_{(\scB)^{\iota_\cO}}$ and then using $Z_{(\scB')^{\iota_\scO}} = \frac{\partial}{\partial s_1} Z_{(\scB)^{\iota_\cO}}$.  We are going to derive \eqref{eq:fm} in a direct way that will be convenient later on for the computation of $c_\omega$.

\begin{wrapfigure}{r}{0.4\textwidth}
	\vspace{-10pt}
	\begin{center}
		\includegraphics[width=0.4\textwidth]{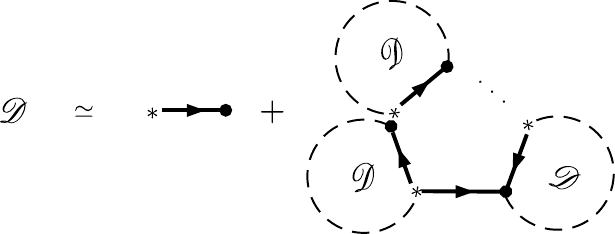}
	\end{center}
	\caption{Decomposition of dissections}
	\label{fi:dis}
\end{wrapfigure}

 To this end, consider the species $\scD$ of dissections of polygons where one edge lying on the frontier of the outer face is marked and oriented, and where the origin of the root-edge is a $*$-vertex that does not contribute to the total size.  Note that $\scD$ is, like any corner-rooted planar map, \emph{asymmetric}, meaning that any permutation of the non-$*$-vertices that leaves the structure invariant must be the identity. The smallest $\scD$-object has size $1$ and consists of a single oriented root-edge. Any larger $\scD$-object may be decomposed in a unique way into an ordered list of at least two $\scD$-objects as illustrated in Figure~\ref{fi:dis}, yielding 
\begin{align}
	\label{eq:disser}
	\scD \simeq \scX +  \sum_{k \ge 2} \scD^k \qquad \text{and} \qquad Z_{\scD} = s_1 + Z_{\scD}^2/(1 - Z_\scD) = \frac{1}{4}\left(1+s_1-\sqrt{s_1^2-6s_1+1}\right).
\end{align}
Any \emph{labelled} $(\scB')^{\iota_\cO}$-object (with positive weight) with size at least $2$  has a unique Hamilton cycle that may be oriented in two different ways, yielding
\begin{align}
	\label{eq:z0}
	Z_0 := Z_{(\scB')^{\iota_\scO}}(s_1, 0, 0, \ldots) = \frac{1}{2}(Z_{\scD} + s_1).
\end{align}
It remains to sum up the weight-monomials of symmetries that are different from the identity. Any automorphism of a \emph{labelled} $(\scB')^{\iota_\cO}$-object is also an automorphism of its Hamilton cycle and hence an element of a dihedral group, meaning it is composed out of rotations and reflections along axis passing through the "center" of the Hamilton  cycle.  But the $*$-vertex is always required to be fixed, hence the only possible non-trivial automorphism is a reflection along the axis that passes through the $*$-vertex and the "center".
This also means that any such graph has only two proper embeddings into the plane, so it makes sense to define the root-face as the unique inner face adjacent to the $*$-root.

\begin{wrapfigure}{l}{0.3\textwidth}
	\vspace{-10pt}
	\begin{center}
		\includegraphics[width=0.25\textwidth]{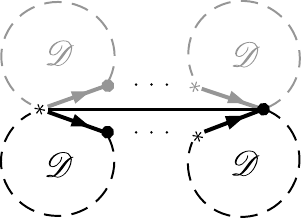}
	\end{center}
	\caption{Symmetric blocks}
	\label{fi:axis1}
\end{wrapfigure}

There are two different cases, depending on whether the axis leaves the Hamilton cycle through the middle of an edge, or through a non-$*$-vertex. The latter case may be partitioned into two subcases, depending on whether there is an edge between the $*$-vertex and this second vertex or not. If this edge is present, then the graph is uniquely determined by the ordered list of $\scD$-objects encountered along one half of the root-face, see Figure~\ref{fi:axis1}. This list must have length at least two as we do not allow multi-edges. All atoms belong to $2$-cycles of the reflection, except for the destination of the root-edge in the last element of the list, since this vertex must be a fixed-point. Moreover, any such graph with $n$ non-$*$-vertices has precisely $n!/2$ labellings, so the sum of weight-monomials of all symmetries of such dissections is given by
\begin{align}
	\label{eq:z1}
	Z_1 := \frac{1}{2}\left( \sum_{k \ge 2} \scD(s_2)^k \right)\frac{s_1}{s_2} = \frac{\scD(s_2)^2 \frac{s_1}{s_2}}{2(1 - \scD(s_2))} = \frac{1}{2}\left(\scD(s_2) \frac{s_1}{s_2} - s_1\right).
\end{align}
with $\scD(z) := Z_{\scD}(z,0,0,\ldots)$.

\begin{wrapfigure}{r}{0.3\textwidth}
%	\vspace{-10pt}
	\begin{center}
		\includegraphics[width=0.25\textwidth]{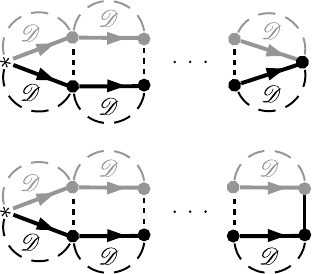}
	\end{center}
	\caption{Symmetric derived blocks with no edge along the axis of symmetry}
	\label{fi:axis2}
\end{wrapfigure}

If there is no edge along the axis of symmetry, then the endpoints of the root-edges of any corresponding pair of identical dissections may be joined by chord. Compare with the upper part of Figure~\ref{fi:axis2}, where these potential chords are indicated by dashed vertical straight lines. Again, any unlabelled graph of this form with $n$ non-$*$-vertices has precisely $n!/2$ labellings. So the cycle index sums for the case where the axis passes through two vertices is given by
\begin{align}
\label{eq:z2}
Z_2 := \frac{1}{2}\left( \sum_{k \ge 1} (2\scD(s_2))^k \right) \scD(s_2)\frac{s_1}{s_2} = \frac{\scD(s_2)^2 \frac{s_1}{s_2}}{1 - 2\scD(s_2)}.
\end{align}
Here the factor $2$ in front of $Z_\scD$ is due to the two options that the chord is present or not. Similarly, the case where the axis leaves the Hamilton cycle through an edge contributes the cycle index sum
\begin{align}
\label{eq:z3}
Z_3 := \frac{1}{2}\left( \sum_{k \ge 0}  (2\scD(s_2))^k \right) \scD(s_2) = \frac{\scD(s_2)}{2(1 - 2\scD(s_2))}.
\end{align}
Summing up, we obtain
\begin{align*}
	Z_{(\scB')^{\iota_\scO}} = Z_0 + \ldots + Z_3 &= \frac{1}{2}\left( \scD(s_1) + \scD(s_2) \frac{s_1}{s_2}  + \frac{2\scD(s_2)^2 \frac{s_1}{s_2}+\scD(s_2)}{1 - 2\scD(s_2)}  \right).
\end{align*}
Using a computer algebra system, we may verify that this is identical to the expression given in Equation~\eqref{eq:fm}.

It follows from this description that the cycle index sum  $Z_{(\scB'^\bullet)^{\iota_\scO}}$ of bi-pointed two-connected outerplanar graphs is given by
\begin{align}
\label{eq:dec}
Z_{(\scB'^\bullet)^{\iota_\scO}} = Z_0^\bullet + Z_1^\bullet + Z_2^\bullet = \frac{1}{2} \left( \scD^\bullet(s_1) + s_1 \right) + Z_1 + Z_2
\end{align}
with $F^\bullet = s_1 \frac{\partial F}{\partial s_1}$ for any series $F$. But what is important to compute $\Ex{\eta}$ is not just the series but its combinatorial interpretation, which is why we made the effort to derive these equations, rather than just recalling~\eqref{eq:fm}. For ease of notation, let us set $a= \tilde{\scA}^\omega(\rho_\scA)$ and $b= \tilde{\scA}^\omega(\rho_\scA^2)$. In order to sample the Boltzmann distributed random graph $\mB'^\bullet$, we may sample a symmetry from $\bigcup_{k \ge 1} \Sym(\scB'^\bullet)[k]$ according to an $(a,b)$-Boltzmann distribution for $\Sym(\scB'^\bullet)$ (that is, any symmetry $(B, \sigma)$ is attained with probability $ \frac{\iota_\scO(B)}{|B|!} a^{\sigma_1} b^{\sigma_2} / Z_{(\scB'^\bullet)^{\iota_\scO}}(a,b)$)  and then forget about the automorphism. The decomposition that lead us to~\eqref{eq:dec} allows us to do sample this random symmetry in a multi-step process. 

\begin{enumerate}
	\item First we select $I \in \{0,1,2\}$ with probability $\Pr{I = i} = Z_i^\bullet(a,b) / Z_{(\scB'^\bullet)^{\iota_\scO}}(a,b)$. 
	\item If $I=0$, we sample a random labelled graph $\mB^0$ from $\bigcup_{k \ge 0} (\scB'^\bullet)^{\iota_\scO}[k]$ with probability given by  $ \Pr{\mB^0 = B} = \frac{\iota_\scO(|B|)}{|B|!}a^{|B|} / (\frac{1}{2} \scD^\bullet(a) + a)$ and equip it with the trivial automorphism.
	\item If $I\in \{1,2\}$ then $Z_i^\bullet = Z_i$, as any symmetry constructed to form $Z_i$ has a unique fixed atom. We select a random symmetry $(\mB^I, \sigma(I))$ among all the symmetries constructed to sum up the cycle index sum $Z_I$ in \eqref{eq:z1} (in case $I=1$)  or \eqref{eq:z2} (in case $I=2$), respectively, with probability
	\[
		\Pr{(\mB^I, \sigma(I)) = (B, \sigma)} = \frac{\iota_\scO(B)}{|B|!} a^{\sigma_1(I)} b^{\sigma_2(I)} / Z_I(a,b).
	\]
\end{enumerate}	
Let $\eta_I$ denote the distance between the $*$-vertex and the marked root in the random graph $\mB^I$.  This yields
\begin{align}
	\label{eq:buyka}
	\Ex{\eta} = \frac{1}{Z_{(\scB'^\bullet)^{\iota_\scO}}(a,b)}\left( \frac{1}{2}(a\scD'(a) + a) \Ex{\eta_0} + Z_1(a,b)\Ex{\eta_1} + Z_2(a,b) \Ex{\eta_2} \right).
\end{align}
Recall that we constructed the symmetries for $Z_1$ as in Figure~\ref{fi:axis1} so that there is always an edge between the $*$-vertex and the marked root. Hence $\eta_1=1$ is constant and so
\begin{align}
	\Ex{\eta_1} = 1.
\end{align} As for the case $I=2$ (see the upper half of Figure~\ref{fi:axis2}), the distance $\eta_2$ is equal to the number of dissections attached to the root-face along one half of the symmetry axis. By the arguments that lead to Equation~\eqref{eq:z2}, it follows that $\eta \ge 2$ and for  any $k \ge 2$ it holds $\Pr{\eta_2 = k} =  (2 \scD(b))^k \frac{a}{4b} / Z_2(a,b)$. This yields
\begin{align}
	\Ex{\eta_2} = Z_2(a,b)^{-1} \sum_{k \ge 2} k (2 \scD(b))^k \frac{a}{4b}  =  \frac{2 - 2 \scD(b)}{1 - 2\scD(b)}.
\end{align}
Finally, consider the case $I=0$. Here we actually treat random labelled graphs, and we may build upon results obtained in this setting. It follows from the arguments that lead to Equation~\eqref{eq:z0} that 
\begin{align}
\Ex{\eta_0} = \frac{a}{\scD^\bullet(a) + a} + \frac{\scD^\bullet(a)}{\scD^\bullet(a) + a} \Ex{\eta_0'}
\end{align}
with $\scD^\bullet(z) = z \scD'(z)$ and $\eta_0'$ the distance between the $*$-vertex and the marked root in a random marked dissection from the class $\scD^\bullet$ that assumes any marked dissection $D^\bullet$ with probability $ \frac{a^{|D^\bullet|}}{|D^\bullet|!} / \scD^\bullet(a)$. Let us set $w := \scD(a)$. It follows from the proof of \cite[Lem. 8.9]{inannals}, where calculations for marked dissection where carried out with different parameters, that there exist numbers $R,S>0$ such that
{
	\small
\begin{align*}
\begin{pmatrix}
2{\w}^{4} -4{\w}^{3}+3\w-1  & -{\w}^{3}+{\w}^{2} & {\w}^{3}-2{
	\w}^{2}+\w  \\
-{\w
}^{3}+{\w}^{2} & 2{\w}^{4}-4{\w}^{3}+
3\w-1 & {\w}^{3}-2{\w}^{2}+\w\\
-{\w}^{2}+\w & -{\w}^{2}+\w & 2{\w}^{4}-4{\w}^{3}+{\w}^{2}+2 \w-1 
\end{pmatrix}
\begin{pmatrix}
\Ex{\eta_0'} \\ R \\ S 
\end{pmatrix}
=\begin{pmatrix}
2{\w}^{4}-4{\w}^{3}-{\w}^
{2}+3\w-1 \\ -\w \\ -{\w}^{2}
\end{pmatrix}.
\end{align*}
}
\noindent
This inhomogeneous system of linear equations (with the indeterminates $\Ex{\eta_0'},R,S$) had a unique solution for the parameter considered in \cite[Lem. 8.9]{inannals}, but we still have to check if this the case in our setting.  The growth constant for unlabelled outerplanar graphs was approximated in  \cite[Sec. 3.1.3]{vigerske}, \cite[Sec. 4.2]{MR2350456} by numerically solving truncated systems of equations, yielding $\rho_\scA \approx 0.1332694$, $a \approx 0.1707560$, $b \approx 0.0180940$,  $E_{uu}(\rho_{\scA}, a) \approx 549.359$ and $E_z(\rho_\scA, a) \approx 1.34975$. See \cite[Sec. 3.1.3]{vigerske} for preciser estimates, that we used to carry out all following calculations.
%We implemented and numerically solved these truncated systems (see \cite[Sec. 3.1.3]{vigerske} for details on how to do that; we used truncation order $m=50$ in the notation of \cite{vigerske}) to obtain precise estimates of these constants, that we used to carry out all following calculations. 
The determinate of the matrix in the system of linear equations evaluates to $\approx -0.00805 \ne 0$. Hence there is a unique solution of the associated inhomogeneous system, yielding $\Ex{\eta_0'} = {\frac {8{\w}^{4}-16{\w}^{3}+4\w-1}{ \left( 4{\w}^{3}-6{\w}^{2}-2\w+1 \right)  \left( 2\w-1
		\right) }} \approx 5.435858$. This allows us to evaluate Equation~\eqref{eq:buyka}, yielding $\Ex{\eta} \approx 5.038561$. Using Equation~\eqref{eq:almost} we obtain $\Ex{\zeta} \approx 0.0534353$ and $\Va{\xi} \approx 93.80631$. Hence Equation~\eqref{eq:comega} evaluates to
\begin{align}
	c_{\omega_\scO} \approx 0.9864689,
\end{align}
as we stated in  Proposition~\ref{pro:proconst}.

%\textcolor{red}{approximation for $b$ was done by evaluating $C^{[m]}(.13326943266744680944^2)$;  $c_k \rho^{2k}$ has order $k^{-3/2} \rho^k$, so the error has order $O(\rho^k)$ which is exponentially small; }

\appendix
\section{Species theory}
\label{sec:species}

In this appendix, we summarize some  aspects of combinatorial species used in our proofs following \cite{MR633783,MR1629341,MR2810913}. A thorough introduction to the subject is beyond the scope of this paper and we refer the reader to these sources.

%The present section is dedicated to recalling the required combinatorial background to prepare for the proof of the main results. 

%A comprehensive and elegant account on the topic is given in the pioneering paper by Joyal~\cite{MR633783}. There the author assumes that the reader has some knowledge of both the French language and the language of category theory, but we are not going to assume either. There are of course excellent reasons for using category theory in this context, for example due to the necessity of monoidal categories to rigorously state associative laws \cite[Ch. 7]{MR633783}, but avoiding this terminology allows us to address a broader audience.

% An extensive and elegant account on the topic is given in the pioneering paper by Joyal~\cite{MR633783}. There the author assumes that the reader has some knowledge of  language of category theory. There are excellent reasons for using category theory in this context, for example due to the necessity of monoidal categories to rigorously state associative laws \cite[Ch. 7]{MR633783}, but we are going to avoid this terminology to address a broad audience.

\subsection{Weighted combinatorial species}
%Formally, a species of combinatorial structures $\scF^\omega$  with non-negative weights may be defined as a functor from the category of finite sets and bijections to the category of weighted finite sets. 
We are going to define \emph{combinatorial species with weights} in the set $\ndR_{\ge 0}$ of non-negative real numbers. Such an object $\scF^\omega$ may be described as follows.
For each finite set $U$ the species $\scF^\omega$ produces a finite set $\scF[U]$ of \emph{$\scF$-structures} and a \emph{weight-map}
$
\omega_U: \scF[U] \to \ndR_{\ge 0}.
$
Furthermore, for any bijection $\sigma: U \to V$ between finite sets the species $\scF^\omega$ produces a \emph{transport function}
$
\scF[\sigma]: \scF[U] \to \scF[V],
$
which must preserve the $\omega$-weights. In other words, the diagram
\[
\xymatrix{ \scF[U]  \ar[r]^{\scF[\sigma]} \ar[dr]^{\omega_U} 
	&\scF[V]\ar[d]^{\omega_V}\\
	&\ndR_{\ge 0}}
\]
is required to commute. The bijections produced by a species are subject to functoriality conditions: the identity map $\text{id}_U$ on a finite set $U$ gets mapped to the identity map $\scF[\text{id}_U] = \text{id}_{\scF[U]}$ on the set $\scF[U]$. For any bijections $\sigma: U \to V$ and $\gamma: V \to W$ the diagram
\[
\xymatrix{ \scF[U]  \ar[r]^{\scF[\sigma]} \ar[dr]^{\scF[\gamma \sigma]} 
	&\scF[V]\ar[d]^{\scF[\gamma]}\\
	&\scF[W]}
\]
must commute. We further assume that $\scF[U] \cap \scF[V] = \emptyset$ whenever $U \ne V$. This is not much of a restriction, as we may always replace $\scF[U]$ by $\{U\} \times \scF[U]$ for all sets $U$, to make sure that it is satisfied. A reader familiar with category theory may without doubt recognize that combinatorial species are endo-functors of the groupoid of finite weighted sets and weight-preserving bijections. In particular, any concerns regarding set-theoretic aspects of the definition of combinatorial species may be dispersed by consulting any book on category theory, in particular the standard treaty~\cite{MR1712872}.

Two weighted species $\scF^\omega$ and $\scH^\nu$ are \emph{isomorphic}, denoted by $\scF^\omega \simeq \scH^\nu$, if there is a family of weight-preserving bijections $(\alpha_U: \scF[U] \to \scH[U])_U$ with $U$ ranging over all finite sets, such the following diagram commutes for each  bijection  $\sigma: U \to V$ of finite sets.
\[
\xymatrix{ \scF[U] \ar[d]^{\alpha_U} \ar[r]^{\scF[\sigma]} &\scF[V]\ar[d]^{\alpha_V}\\
	\scH[U] \ar[r]^{\scG[\sigma]} 		    &\scH[V]}
\]

We say $\scH^\nu$ is a \emph{subspecies} of $\scF^\omega$, if for each finite set $U$, any bijection $\sigma: U \to V$ and each $\scH$-object $H \in \scH^\nu$ it holds that $\scH[U] \subset \scF[U]$, $\scH[\sigma](H) = \scF[\sigma](H)$ and $\nu_U(H)  = \omega_U(H)$. We denote this by $\scH^\nu \subset \scF^\omega$. By abuse of notation, we will usually denote the weighting on both $\scH$ and $\scF$ by $\omega$.

It will be convenient to simply write $\omega(F)$ instead of $\omega_U(F)$ for the weight of a structure $F \in \scF[U]$. If no weighting is specified explicitly for a species $\scF$, we assume that all structures receive weight $1$. We refer to the set $U$ as the set of \emph{labels} or \emph{atoms} of the structure. For any $\scF$-object $F \in \scF[U]$ we let
$
|F| := |U| \ge 0
$
denote its \emph{size}. 

\subsection{Ordinary generating series and cycle index sums}
Given a finite set $U$, the symmetric group $\scS_U$ operates on the set $U$ via
\[
\sigma.F = \scF[\sigma](F)
\]
for all $F \in \scF[U]$ and $\sigma \in \mathscr{S}_U$. Any bijection $\sigma$ with $\sigma.F = F$ is termed an \emph{automorphism} of $F$.  All $\scF$-objects of an orbit $\tilde{F}$ have the same size and same $\omega$-weight, which we denote by $|\tilde{F}|$ and $\omega(\tilde{F})$. This yields the weighted set $\tilde{\scF}[U]$ of orbits under this operation.
Formally, an \emph{unlabelled} $\scF$-object is defined as an isomorphism class of $\scF$-objects. We may also identify the unlabelled objects of a given size $n$ with the orbits of the action of the symmetric group on any $n$-sized set.  By abuse of notation, we treat unlabelled objects as if they were regular $\scF$-objects. The power series
\[
\tilde{\scF}^\omega(z) = \sum_{\tilde{F}} \omega(\tilde{F}) z^{|\tilde{F}|}
\]
is the \emph{ordinary generating series} of the species. Here the index ranges over all unlabelled $\scF$-objects.

To any species $\scF$ we may associate the corresponding functor $\Sym(\scF)$ of \emph{$\scF$-symmetries} such that
\[
\Sym(\scF)[U] = \{ (F, \sigma) \mid F \in \scF[U], \sigma \in \mathscr{S}_U, \sigma.F = F\}.
\]
In other words, a symmetry is a pair of an $\scF$-object and an automorphism. 
The transport along a bijection $\gamma: U \to V$ is given by
\[
\Sym(\scF)[\gamma](F, \sigma) = (\scF[\gamma](F), \gamma \sigma \gamma^{-1}).
\]
For any permutation $\sigma$ we let $\sigma_i$ denote its number of $i$-cycles. In particular, $\sigma_1$ counts the number of fixpoints. The \emph{cycle index series} of a species $\scF^\omega$ is defined as the formal power series
\[
Z_{\scF^\omega}(s_1, s_2, \ldots) = \sum_{k \ge 0} \sum_{(F, \sigma) \in \Sym(\scF)[k]} \frac{\omega(F)}{k!} s_1^{\sigma_1} \cdots s_k^{\sigma_k}
\]
in countably infinitely many indeterminates $(s_i)_{i \ge 1}$.  Consider symmetries is useful, as  it provides a way of counting orbits:

\begin{lemma}
	\label{le:relation1}
	For any finite set $U$ with $n$ elements and any unlabelled $\scF$-object $\tilde{F} \in \tilde{\scF}[U]$ there are precisely $n!$ many symmetries $(F, \sigma) \in \Sym(\scF)[U]$ such that $F$ belongs to the orbit $\tilde{F}$. Hence there is a weight-preserving $1$ to $n!$ relation between $\tilde{\scF}[U]$ and $\Sym(\scF)[U]$. Consequently:
	\[
	\tilde{\scF}^\omega(z) = Z_{\scF^\omega}(z, z^2, z^3, \ldots).
	\]
\end{lemma}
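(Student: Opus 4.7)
The plan is to deduce the lemma from the orbit--stabilizer theorem applied to the action of the symmetric group $\mathscr{S}_U$ on $\scF[U]$. Fix an unlabelled object $\tilde{F}\in \tilde{\scF}[U]$ and pick any representative $F\in \tilde{F}$. By orbit--stabilizer, $|\tilde{F}|\cdot|\Aut(F)| = |\mathscr{S}_U| = n!$, where $\Aut(F)=\{\sigma\in\mathscr{S}_U\mid \sigma.F=F\}$. For any other representative $F'=\gamma.F$ in the same orbit, the stabilizer $\Aut(F')=\gamma\Aut(F)\gamma^{-1}$ is conjugate to $\Aut(F)$, so $|\Aut(F')|=|\Aut(F)|$.

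Now I would count the symmetries $(F',\sigma)\in\Sym(\scF)[U]$ with $F'\in\tilde{F}$ by summing over representatives: this number equals
\[
\sum_{F'\in\tilde{F}} |\Aut(F')| \;=\; |\tilde{F}|\cdot|\Aut(F)| \;=\; n!.
\]
Since the transport maps preserve the $\omega$-weight, every structure in the orbit carries the same weight $\omega(\tilde{F})$, so this is a weight-preserving $1$-to-$n!$ correspondence between $\tilde{\scF}[U]$ and $\Sym(\scF)[U]$, as claimed.

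For the generating series identity, the key observation is that any permutation $\sigma\in\mathscr{S}_k$ satisfies $\sigma_1+2\sigma_2+\cdots+k\sigma_k=k$, because the cycle type of $\sigma$ is a partition of $k$. Substituting $s_i=z^i$ in the cycle index series therefore yields
\[
Z_{\scF^\omega}(z,z^2,z^3,\ldots) = \sum_{k\ge 0}\sum_{(F,\sigma)\in\Sym(\scF)[k]} \frac{\omega(F)}{k!}\,z^{\sigma_1+2\sigma_2+\cdots+k\sigma_k} = \sum_{k\ge 0}\frac{z^k}{k!}\sum_{(F,\sigma)\in\Sym(\scF)[k]}\omega(F).
\]
Grouping the inner sum by orbits and applying the $n!$-to-$1$ count from the first paragraph (with $U=[k]$) collapses the factor $1/k!$, giving $\sum_{\tilde{F}\in\tilde{\scF}[k]}\omega(\tilde{F})$, and summing over $k$ recovers $\tilde{\scF}^\omega(z)$.

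I do not expect any real obstacle: the only subtle point is making sure the count of symmetries in the orbit is truly independent of the chosen representative, which is handled by the conjugacy observation $\Aut(F')=\gamma\Aut(F)\gamma^{-1}$. Everything else is bookkeeping: checking weight invariance under transport, verifying that cycle types partition the label set, and exchanging the order of summation.
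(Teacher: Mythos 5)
Your proof is correct and is essentially the standard orbit--stabilizer argument that the paper itself does not spell out but delegates to Bergeron--Labelle--Leroux \cite[Ch. 2.3]{MR1629341}. One small caution: in your orbit--stabilizer step you use $|\tilde{F}|$ for the cardinality of the orbit as a subset of $\scF[U]$, whereas the paper reserves $|\tilde{F}|$ for the size (number of atoms) of the structures in that orbit, so a different symbol would avoid a clash.
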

This standard result is explicit in Bergeron, Labelle and Leroux \cite[Ch. 2.3]{MR1629341}.% and shows how the ordinary generating series and the cycle index sum of a species are related.

\subsection{The cycle pointing operator}
For each finite set $U$ and permutation $\sigma \in \scS_U$, the generated subgroup $<\sigma>\subset \scS_U$ operates canonically on~$U$. The restriction of $\sigma$ to any single orbit of this operation is termed a \emph{cycle} of $\sigma$. For any cycle $\tau$ we let its \emph{length} $|\tau|$ be the number of elements of the corresponding orbit.
The \emph{cycle pointed species} $(\scF^\circ)^\omega$ associated to a species $\scF^\omega$ is defined as follows. For each finite set $U$, the elements of the set $\scF^\circ[U]$ are all pairs $(F, \tau)$ of an $\scF$-structure $F$ and a cyclic permutation $\tau$ of some subset of $U$ such that there is at least one automorphism $\sigma \in \scS_U$ of $F$ having $\tau$ as one of its disjoint cycles. (Here we allow the case where $\tau$ is just a fixed-point of $\sigma$.) The transport along a bijection $\gamma: U \to V$ is defined by
\[
\scF^\circ[\gamma](F, \tau) = (\scF[\gamma](F), \gamma \tau \gamma^{-1}).
\]
The weighting of the cycle pointed version is inherited from the original species by
\[
\omega(F, \tau) = \omega(F).
\]
The idea behind the cycle pointing operator is that it provides a way for counting objects up to symmetry.
\begin{lemma}
	\label{le:relation2}
	For any finite set $U$ with $n$ vertices there is a weight-preserving $1$ to $n$ correspondence between the set $\tilde{\scF}[U]$ of orbits of $\scF$-objects and the set $\tilde{\scF}^\circ[U]$ of orbits of $\scF^\circ$-objects.
\end{lemma}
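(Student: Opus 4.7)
The plan is to reduce the lemma to an orbit-theoretic claim about the action of $\Aut(F)$ for a representative $F$ of each orbit $\tilde F \in \tilde{\scF}[U]$.  Fixing such a representative, the $\scS_U$-orbits of $\scF^\circ$-objects whose underlying $\scF$-object lies in $\tilde F$ are in weight-preserving bijection with the $\Aut(F)$-orbits of the set $\mathrm{Cyc}(F) := \{\tau : \tau \text{ is a cycle of some } \sigma \in \Aut(F)\}$, where $\Aut(F)$ acts by conjugation and every $\scF^\circ$-object inherits the weight $\omega(F)$.  Hence it suffices to prove $|\mathrm{Cyc}(F)/\Aut(F)| = n$ for every $F \in \scF[U]$.

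To establish this equality I would introduce the auxiliary set $S := \{(\tau, u) : \tau \in \mathrm{Cyc}(F),\ u \in \mathrm{supp}(\tau)\}$ with the diagonal $\Aut(F)$-action and count $|S/\Aut(F)|$ in two ways.  The first projection $S \to \mathrm{Cyc}(F)$ descends to a bijection $S/\Aut(F) \simeq \mathrm{Cyc}(F)/\Aut(F)$: if $\sigma_0 \in \Aut(F)$ has $\tau$ as a cycle then it commutes with $\tau$ and acts transitively on $\mathrm{supp}(\tau)$ via the generated cyclic subgroup, so all pairs sharing a common first coordinate are identified in the quotient.  Projecting instead onto $U$ yields $|S/\Aut(F)| = \sum_{[u] \in U/\Aut(F)} |F_u/\Aut(F)_u|$, where $F_u := \{\tau \in \mathrm{Cyc}(F) : u \in \mathrm{supp}(\tau)\}$ and $\Aut(F)_u$ is the point-stabilizer of $u$.

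The key substep is to show $|F_u/\Aut(F)_u| = |\Aut(F) \cdot u|$.  By Burnside applied to $\Aut(F)_u \curvearrowright F_u$, this amounts to verifying $\sum_{\pi \in \Aut(F)_u} |F_u^\pi| = |\Aut(F)|$.  Because any $\pi$ commuting with $\tau$ restricts on $\mathrm{supp}(\tau)$ to a power of $\tau$, and the extra condition $\pi(u)=u$ annihilates that power, the fixed-point set simplifies to $F_u^\pi = \{\tau \in F_u : \mathrm{supp}(\tau) \subseteq \mathrm{Fix}(\pi)\}$, and swapping the order of summation converts the Burnside sum into $\sum_{\tau \in F_u} |P_{\mathrm{supp}(\tau)}|$, where $P_V$ denotes the pointwise stabilizer of $V$ in $\Aut(F)$.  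This sum is exactly $|\Aut(F)|$: the map $\sigma \mapsto \tau_u(\sigma)$ taking an automorphism to its cycle through $u$ is surjective onto $F_u$, and each fiber $\{\sigma : \sigma|_{\mathrm{supp}(\tau)} = \tau\}$ is a coset of $P_{\mathrm{supp}(\tau)}$ in $\Aut(F)$.

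Combining both counts yields $|\mathrm{Cyc}(F)/\Aut(F)| = \sum_{[u]} |\Aut(F) \cdot u| = |U| = n$, which is the desired 1-to-$n$ correspondence with weights preserved by construction.  The main difficulty will be the Burnside fixed-point analysis: one must exploit that $\pi$ commuting with $\tau$ forces $\pi|_{\mathrm{supp}(\tau)} \in \langle \tau \rangle$, which together with $\pi$ fixing some element of $\mathrm{supp}(\tau)$ forces $\pi|_{\mathrm{supp}(\tau)}$ to be trivial — a structural observation that makes the coset counting close up.
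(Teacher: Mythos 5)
Your argument is correct, but it follows a genuinely different route from the paper, which gives no self-contained proof at all: it simply invokes \cite[Lemma 4]{MR2810913} and remarks that the extension to weighted species is straightforward (the proof in that reference runs through the symmetry machinery, i.e.\ the $1$-to-$n!$ relation between unlabelled objects and symmetries as in Lemma~\ref{le:relation1} and its analogue relating unlabelled cycle-pointed objects to rooted $c$-symmetries, so that the factor $n$ appears from the $n$ choices of a distinguished atom in a symmetry). You instead fix a representative $F$ of an orbit $\tilde F$, reduce the claim to showing that $\Aut(F)$ acting by conjugation on the set of cycles of automorphisms of $F$ has exactly $n$ orbits, and prove this by double-counting the incidence set of pairs (cycle, atom in its support) together with a Burnside computation: the identification of the fixed-point sets $F_u^\pi$ with cycles supported in $\mathrm{Fix}(\pi)$, and the coset decomposition of $\Aut(F)$ by the fibers of $\sigma \mapsto (\text{cycle of } \sigma \text{ through } u)$, are both sound, and weight-preservation is immediate since every cycle-pointed structure over $\tilde F$ carries the weight $\omega(F)$. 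Your approach buys a self-contained, purely group-theoretic proof that never mentions rooted $c$-symmetries and handles weights transparently; the paper's citation route buys economy and consistency, since the rooted-symmetry formalism and the associated cycle index sums are needed anyway for the Boltzmann sampling and enumeration arguments elsewhere in the paper, so no genuinely new machinery is introduced there just for this lemma.
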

This result has been proven in \cite[Lemma 4]{MR2810913} in the context of species without weightings, and the generalization to the weighted context is straight-forward. Lemma~\ref{le:relation2} shows that there is no difference in sampling a random $n$-sized unlabelled object with probability proportional to its weight from $\scF$ and $\scF^\circ$.

Any subspecies $\scH^\nu \subset (\scF^\circ)^\omega$ is termed \emph{cycle-pointed} as well. A natural example is the subspecies $(\scF^{\circledast})^\omega  \subset (\scF^\circ)^\omega$ of \emph{symmetrically} cycle-pointed objects for which the length of the marked cycle of each object is required to be at least $2$. For any finite set $U$ we let $\RSym(\scH)[U]$ denote the set of all tuples $(H, \sigma, \tau, v)$ with $(H, \tau) \in \scH[U]$, $\sigma$ an automorphism of $H$ having $\tau$ as one of its disjoint cycles, and $v \in U$ an atom of the cycle $\tau$. In order to keep track of the length of the marked cycle, cycle-pointed species receive an extended version of the cycle index sum
\[
\bar{Z}_{\scH^\nu}(s_1, t_1; s_2, t_2; \ldots) = \sum_{k \ge 0} \frac{1}{k!} \sum_{ (H, \sigma, \tau, v) \in \RSym(\scH)[k]} \nu(H) \frac{t_{|\tau|}}{s_{|\tau|}} s_1^{\sigma_1} s_2^{\sigma_2} \ldots s_k^{\sigma_k}.
\]
Lemma~\ref{le:relation2} readily implies that
\[
	\tilde{\scH}^\nu(z) = \bar{Z}_{\scH^\nu}(z,z; z^2, z^2; \ldots).
\]

\begin{comment}
\subsection{P\'olya-Boltzmann distributions}
\label{sec:polya}
Let $\scF^\omega$ be a weighted species and $x_1, x_2 \ldots \ge 0$  parameters with \[0 < Z_{\scF^\omega}(x_1, x_2, \ldots) < \infty.\]
The associated \emph{P\'olya-Boltzmann distribution} $\mathbb{P}_{Z_{\scF^\omega}, (x_i)_{i \ge 1}}$ is a probability measure on the set
\[
\bigsqcup_{n \ge 0} \Sym(\scF)[n]
\]
given by
\[
\mathbb{P}_{Z_{\scF^\omega}, (x_i)_{i \ge 1}}(F, \sigma) = Z_{\scF^\omega}(x_1, x_2, \ldots)^{-1} \frac{\omega(F)}{|F|!} \prod_{i \ge 1} x_i^{\sigma_i}.
\]
Let $x>0$ be a parameter with \[
0 < \tilde{\scF}^\omega(x) <\infty
\] and  $n \ge 0$ an integer  with $[z^n]\tilde{\scF}^\omega(z) >0$. If $(\cF, \sigma)$ is a random symmetry following a $\mathbb{P}_{Z_{\scF^\omega}, (x, x^2, x^3, \ldots)}$-distribution, then the orbit corresponding to the conditioned $\scF$-object $(\cF \mid |\cF| = n)$ gets drawn from the set $\tilde{\scF}[n]$ with probability proportional to its weight. In a way, this is analogous to the fact that simply generated planted plane trees with an analytic weight-sequence are distributed like Galton-Watson trees conditioned on having a specific size.
\end{comment}

\subsection{Further operators}

There are various standard ways to combine given species (cycle-pointed or not) to form new ones. We briefly recall some notation and relevant facts, but refer the reader to the literature \cite[Section 2.3]{MR1629341} and \cite{MR2810913} for a thorough description of these constructions. Throughout we let $\scF^\omega$ and $\scG^\nu$ denote weighted species.

\subsubsection{Constructions without cycle-pointing}

 If  $\scG^\nu[\emptyset] = \emptyset$ then we may form the \emph{composition} or \emph{substitution} $\scF^\omega \circ \scG^\nu$. It is a weighted species that describes partitions of finite sets, where each partition class is endowed with a $\scG$-structure, and the collection of partition classes carries an $\scF$-structure. The weight of such a  composite structure is the product of weights of its $\scF^\omega$-structure and $\scG^\nu$-structures. The cycle index sum of the substitution is given by 
\begin{align*}
%\label{eq:zcomp}
Z_{\scF^\omega \circ \scG^\nu}(s_1, s_2, \ldots) = Z_{\scF^\omega}(Z_{\scG^\nu}(s_1, s_2, \ldots), Z_{\scG^{\nu^{2}}}(s_2, s_4, \ldots), Z_{\scG^{\nu^{3}}}(s_3, s_6, \ldots), \ldots ).
\end{align*}  
Here $\nu^i$ denotes the weighting that assigns to each $\scG$-object $G$ the weight $\nu^i(G) = \nu(G)^i$. See for example \cite[Theorem 3 and Section 6]{MR633783} or \cite[Proposition 11 of Section 2.3]{MR1629341} for  details.

The \emph{product} $\scF^\omega \cdot \scG^\nu$ describes ordered pairs of an $\scF^\omega$ and an $\scG^\nu$ structure. The weight of such a structure is the product of weights of its components. The cycle index sum of the product satisfies $
Z_{\scF^\omega \cdot \scG^\mu} = Z_{\scF^\omega} Z_{\scG^\nu}
$.  The \emph{sum} $\scF^\omega + \scG^\nu$ describes the disjoint union of the two species, that canonically extends to weights and transport functions.
The cycle index sum of the sum satisfies $Z_{\scF^\omega + \scG^\nu} = Z_{\scF^\omega} + Z_{\scG^\nu}$. It is straight-forward to generalize this concept to sums of countably many species subject to the summability constraint that in total only finitely many unlabelled objects of any fixed size are present. The \emph{derived} species $(\scF')^\omega$ describes $\scF^\omega$-objects where one atom is marked and no longer contributes to the total size. Its cycle index series is given by $Z_{(\scF')^\omega}(s_1,s_2, \ldots) = \frac{\partial}{\partial s_1} Z_{\scF^\omega}(s_1,s_2, \ldots)$. Similar to the derived species, the \emph{pointed} species $\scF^\bullet$ is given by $\scF^\bullet = \scF' \cdot \scX$ with $\scX$ is the species having a single object of size $1$ and weight $1$.

\subsubsection{Constructions for cycle-pointed species}

It the species $\scF^\omega\subset (\scH^\circ)^\omega$ is cycle-pointed and $\scG[\emptyset]=\emptyset$ we may form the \emph{cycle-pointed substitution} $\scF^\omega \circledcirc \scG^\nu$ as follows. Given an $(\scH \circ \scG)^\circ$-structure $((H, (G_Q)_{Q \in \pi}), \tau)$, there must be an automorphism $\sigma$ having $\tau$ as one of its cycles. Let 
$
\bar{\sigma}: \pi \to \pi,  Q \mapsto \sigma(Q)
$
denote the corresponding induced map on the partition. For each atom $v$ of the cycle $\tau$ let $Q(v) \in \pi$ denote the unique partition class to which it belongs. Clearly it must hold that
$
\bar{\sigma}(Q(v)) = Q(\tau(v)).
$
Hence $\bar{\sigma}$ restricted to the set $\{Q(v) \mid v \in \tau\}$ forms a cycle $\bar{\tau}$. This makes $(H, \bar{\tau}) \in \scH^\circ[\pi]$ a cycle-pointed $\scH$-structure, that is called the \emph{core structure}. If the core structure belongs to the subset $\scF[\pi] \subset \scH^\circ[\pi]$, then we say $((H, (G_Q)_{Q \in \pi}), \tau)$ belongs to the \emph{cycle-pointed substitution} of $\scF^\omega$ with $\scG^\nu$. This defines a subspecies 
$
\scF^\omega \circledcirc \scG^\nu \subset (\scH^\omega \circ \scG^\nu)^\circ
$, and the weighting on $\scF^\omega \circledcirc \scG^\nu$ is inherited from $(\scH^\omega \circ \scG^\nu)^\circ$.
By \cite[Prop. 18]{MR2810913}, the extended cycle index sum of the cycle-pointed substitution is given by
\begin{align}
\bar{Z}_{\scF^\omega \circledcirc \scG^\nu}(s_1, t_1; s_2, t_2; \ldots) = \bar{Z}_{\scF^\omega}(g_1, \bar{g}_1; g_2, \bar{g}_2; \ldots)
\end{align}
with
$
g_i = Z_{\scG^{\nu^i}}(s_i,s_{2i}, s_{3i}, \ldots )
$
and
$
\bar{g}_i = \bar{Z}_{(\scG^\circ)^{\nu^i}}(s_i,t_i; s_{2i}, t_{2i}; s_{3i}, t_{3i}; \ldots ).
$
To be precise, \cite[Prop. 18]{MR2810913} states this equality for species without weights, but the generalization to the weighted context is straight-forward. 

With the species $\scF^\omega$ being cycle-pointed, the product $\scG^\nu  \cdot \scF^\omega $ may also be interpreted as a cycle-pointed species $\scG^\nu \star \scF^\omega$, since the marked cycle of the $\scF$-structure is also a cycle of some automorphism of the $\scG \cdot \scF$-structure. The corresponding extended cycle index sum is given by
$
\bar{Z}_{\scG^\nu \star  \scF^\omega } = Z_{\scG^\nu} \bar{Z}_{\scF^\omega}.
$
Likewise, if $\scF^\omega$ and $\scG^\nu$ are both cycle pointed, then so is their sum $\scF^\omega + \scG^\nu$ and the extended cycle index sum satisfies
$\bar{Z}_{\scF^\omega + \scG^\nu} = \bar{Z}_{\scF^\omega} + \bar{Z}_{\scG^\nu}$. See~\cite{MR2810913} for details.

\subsection{Associative laws}
	There are natural associative laws for the sum, product and substitution operations of the form
	\begin{align}
	\label{eq:iso}
	(\scF^\omega \mu \scG^\nu) \mu \scH^\kappa \simeq \scF^\omega \mu (\scG^\nu \mu \scH^\kappa)
	\end{align}
	for $\mu \in \{+,\cdot, \circ\}$, that ensure that regardless how we put the parentheses, the results are always isomorphic as species. Even more, there are natural choices of isomorphisms in \eqref{eq:iso} such that regardless in which order we successively apply the associative law to change from one parenthesization to another, the resulting concatenations of isomorphisms are always identical. It is for this strong form of associativity up to \emph{canonical} isomorphism that we may drop the parentheses without any hazard~\cite[Ch. VII]{MR1712872}. The inclined reader may consult \cite[Ch. 7]{MR633783} for further details.

\small
\bibliographystyle{abbrv}
\bibliography{ungra}

\end{document}